\DeclareMathOperator\arctanh{arctanh}
\newtheorem{theorem}{Theorem}[section]
\newtheorem{proposition}[theorem]{Proposition}
\newtheorem{lemma}[theorem]{Lemma}
\newtheorem{corollary}[theorem]{Corollary}
\newtheorem{remark}[theorem]{Remark}
\newtheorem{thm}{Theorem}[section]
\theoremstyle{definition}
\newtheorem{defn}[thm]{Definition}
\theoremstyle{remark}
\newtheorem{rem}[thm]{Remark}
\numberwithin{equation}{section}
\renewcommand{\kappa}{{k_\Omega}}
\def\R{\mathbb{R}}
\def\S{\mathbb{S}}
\def\di12{\mathcal{D}^{1,2}(\R^n)}
\def\l{{\lambda}}
\def\b{{\beta}}
\def\0l{_{0,\l}}
\def\1l{_{1,\l}}
\def\2l{_{2,\l}}
\def\3l{_{3,\l}}
\def\4l{_{4,\l}}
\def\Om{\Omega}
\def\beq{\begin{equation}}
\def\eeq{\end{equation}}
\def\sideremark#1{\ifvmode\leavevmode\fi\vadjust{\vbox to0pt{\vss
 \hbox to 0pt{\hskip\hsize\hskip1em
 \vbox{\hsize2.1cm\tiny\raggedright\pretolerance10000
  \noindent #1\hfill}\hss}\vbox to15pt{\vfil}\vss}}}%
\newcommand{\M}{\color{blue}}
\newcommand{\F}{\color{red}}
\newcommand{\LP}{\color{purple}}
\newtheorem*{theorem*}{Theorem}
\begin{document}
\title[]{ The role of the curvature of a surface in the shape of the solutions to elliptic equations}
\thanks{The first and second authors acknowledge support of INDAM-GNAMPA.  The third author acknowledges support of INDAM-GNSAGA and of the project ``Perturbation problems and asymptotics for elliptic differential equations: variational and potential theoretic methods'' funded by the European Union – Next Generation EU and by MUR-PRIN-2022SENJZ3. The first author acknowledges support of the project e.INS- Ecosystem of Innovation for
Next Generation Sardinia (cod. ECS 00000038) funded by the Italian Ministry for Research and Education
(MUR) under the National Recovery and Resilience Plan (NRRP) - MISSION 4 COMPONENT 2, ``From
research to business'' INVESTMENT 1.5, ``Creation and strengthening of Ecosystems of innovation'' and
construction of ``Territorial R\&D Leaders'' and of the project Next Generation EU-CUP-J55F21004240001,  DM 737-2021, risorse $2022-2023$. }
\author[Gladiali]{Francesca Gladiali}
\address{Dipartimento di Scienze CFMN,  Universit\`a degli Studi di Sassari, Via Vienna 2 - 07100 Sassari, Italy, e-mail: {\sf fgladiali@uniss.it}.}
\author[Grossi]{Massimo Grossi }
\address{Dipartimento di Scienze di Base e Applicate per l'Ingegneria, Universit\`a di Roma ``La Sapienza'', Via Scarpa 12 - 00161 Roma, Italy, e-mail: {\sf massimo.grossi@uniroma1.it}.}
\author[Provenzano]{Luigi Provenzano}
\address{Dipartimento di Scienze di Base e Applicate per l'Ingegneria, Universit\`a di Roma ``La Sapienza'', Via Scarpa 12 - 00161 Roma, Italy, e-mail: {\sf luigi.provenzano@uniroma1.it}.}

\begin{abstract}
We prove uniqueness and non-degeneracy of the critical point of positive, semi-stable solutions of $-\Delta u=f(u)$ with Dirichlet boundary conditions for a class of star-shaped domains of the sphere and of the hyperbolic plane satisfying a geometric condition.  In the spherical case, this condition is weaker than convexity, while in the hyperbolic case it is weaker than horoconvexity. Finally, we construct examples showing that this geometric condition is indeed optimal.
\end{abstract}

\keywords{Semilinear elliptic equations, critical points, star-shaped domains, $2$-sphere, hyperbolic plane, Poincaré-Hopf Theorem}
\subjclass{58J32, 35J61, 58J20, 58J05}

\maketitle

\section{Introduction and statement of the main result}
Let $\Omega$ be a smooth domain of $M$, where $M=\mathbb S^2,\mathbb H^2$ with the standard metrics of constant curvature $1,-1$,  and let $u$ be a solution to the following semilinear problem:
\begin{equation}\label{eq1}
\begin{cases}
-\Delta u=f(u)\,, & {\rm in\ }\Omega,\\
u=0\,, & {\rm on\ }\partial\Omega.
\end{cases}
\end{equation}
Here $f:[0,+\infty)\to\mathbb R$ is a $C^1$ nonlinearity satisfying $f(0)\geq 0$. We consider {\it positive} solutions of \eqref{eq1} that are {\it semi-stable}, meaning that the (linear) Schr\"odinger operator
\begin{equation}\label{semistable}
-\Delta-f'(u)I
\end{equation}
is non-negative in $\Omega$.

Two classical problems that fall in this setting are the {\it torsion problem}, i.e.,
\begin{equation}\label{torsion}
f(s)=1
\end{equation}
and the {\it Dirichlet eigenvalue problem} (when we consider the {\it first eigenfunction}), i.e.,
\begin{equation}\label{eigenvalue}
f(s)=\lambda_1 s,
\end{equation}
where $\lambda_1$ is the first Dirichlet eigenvalue of $\Omega$.

\subsection{Discussion}
The study of critical points of solutions of elliptic equations is a vast topic. Typical questions involve estimates on the number of critical points and their location. Some  introductory surveys on the subject are \cite{Gera,magnanini}. The majority of available results concern the torsion problem \eqref{torsion} and the eigenvalue problem \eqref{eigenvalue}, especially for convex domains in $\mathbb R^n$. For a general non-linearity $f$, very few is known outside the class of convex planar domains.

\subsubsection{The first Dirichlet eigenfunction} This is perhaps the most studied case. The uniqueness of the critical point in the case of convex Euclidean domains is well-known \cite{AcPaPh,BrLi,CaFr}. This result (as most of the results for problems \eqref{torsion} and \eqref{eigenvalue}) is obtained in a somehow indirect way, namely it is a consequence of the fact that the first eigenfunction is log-concave, joined with the interior regularity properties of the eigenfunction. 

For convex domains of the sphere $\mathbb S^n$, the analogous result has been obtained in \cite{LeeWa}, again as a consequence of log-concavity estimates on the eigenfunction (see also \cite{DSW,HWZ}). In the case of convex domains of $\mathbb S^2$ with diameter smaller than $\frac{\pi}{2}$, a direct proof of the uniqueness and non-degeneracy of the critical point has been presented in \cite{GP1}. Log-concavity of the eigenfunction for convex domains in surfaces of positive curvature has been studied in \cite{Khan1}.  A new point of view has been taken in \cite{Khan2}, where log-concavity estimates are investigated when a given metric undergoes a conformal deformation.

Concerning the hyperbolic space $\mathbb H^n$, in dimension $n=2$ it has been proved in \cite{GP1} that the first eigenfunction on {\it horoconvex} domains has exactly one, non-degenerate critical point. The result seems to be optimal in the sense that there exist examples of convex domains (which are not horoconvex) for which the first eigenfunction has at least two critical points \cite{shih}, or for which the level sets are non-convex \cite{BCNHS}. A log-concavity estimate has been recently proved \cite{Khan2}  in the case of horoconvex domains of $\mathbb H^2$ with sufficiently small diameter.

If we leave the realm of convex domains, estimates on the number of critical points for the first Dirichlet eigenfunction become more difficult or even unavailable. In the case of manifolds, we mention \cite{EPS15,EPS17} where the authors are able to find metrics on a given Riemannian manifold for which the first eigenfunction has an arbitrary number of critical points.

\subsubsection{The torsion function} Surprisingly enough, there are less results on the critical points of the torsion functions compared to the first eigenfunction. A seminal  paper is \cite{MaLi} where the author proves the convexity of the level lines of the torsion function on planar convex domains, and the uniqueness and non-degeneracy of the critical point. On the other hand, if we drop the convexity assumption, we have no bounds on the number of critical points, even if a domain is close in a suitable sense to a convex one \cite{GG22}. The result of \cite{MaLi} has been extended to any dimension in \cite{KoLe}.

In \cite{GP1} it has been proved that the torsion function has a unique, non-degenerate critical point for horoconvex domains of $\mathbb H^2$ and for convex domains of the sphere of diameter smaller than $\frac{\pi}{2}$. In \cite{Khan2} the authors prove the quasi-concavity of the torsion function for convex spherical domains of small diameter (less than $2\arctan(1/5)$). A complete proof of the conjectured $\frac{1}{2}$-concavity of the torsion function in spherical geometry is currently unavailable. Up to our knowledge, no result is available in dimension $n\geq 3$.

In \cite{GG22} the authors exhibit a family of star-shaped planar domains for which the torsion function has an arbitrary number of critical points. These domains are ``not too far'' from being convex. 

\subsubsection{General non-linearity}
For a general non-linearity $f$, the results in the celebrated paper \cite{GNN} imply that for a domain in $\mathbb R^n$ which is symmetric with respect to a point and convex in any direction, the critical point of a positive solution of \eqref{eq1} is unique and non-degenerate. Again, the statement on the critical point is an indirect result which follows from the symmetry properties of a solution derived in \cite{GNN}.

A second relevant result is \cite{CC} where the authors confine to strictly convex planar domains, dropping symmetry conditions, and requiring positivity and semi-stability of solutions. Under these hypotheses, they prove uniqueness and non-degeneracy of the critical point. Strict convexity has been relaxed in \cite{DRG1}. We remark that in \cite{CC,DRG1} the study of the critical points has been attacked {\it directly}. Roughly speaking, the combinatorial properties of critical points of smooth functions on differentiable manifolds are related to the topology of the manifold though its {\it Euler characteristic}. However, classical theorems in differential topology (as the Poincaré-Hopf Theorem, see Theorem \ref{PH}) imply that the topological information on the manifolds gives an information on a {\it signed} sum of the critical points, the sign depending on the nature of the critical points. With no other information, the research for estimates on the number of critical points is a very difficult task. Moreover, the difficulties of taking a direct approach increase considerably in dimension $n\geq 3$. We refer to \cite{Gera,magnanini} for a deeper discussion on this point.

For domains of $\mathbb H^2$ and $\mathbb S^2$, up to our knowledge, almost nothing was known for general non-linearities until \cite{GP1}. Namely, in \cite{GP1} it has been proved that positive, semi-stable solutions of \eqref{eq1} have a unique-non degenerate critical point in the following cases: 
\begin{itemize}
\item[\textbullet]$\Omega$ is a horoconvex set of $\mathbb H^2$;
\item[\textbullet] $\Omega$ is a uniformly convex set (smooth with strictly positive geodesic curvature) of $\mathbb S^2$ with diameter less than $\frac{\pi}{2}$.
\end{itemize}
The results of \cite{GP1} are in the spirit of \cite{CC}, in the sense that they descend from an application of the Poincaré-Hopf Theorem to the counting of the index of a suitable vector field related to $\nabla u$.

The results of \cite{GP1} are quite good in the case of $\mathbb H^2$ in view of the examples in \cite{BCNHS,shih}, however the restriction on the diameter for domains in $\mathbb S^2$ is unsatisfactory. Roughly speaking, the negative curvature of $\mathbb H^2$ forces to require more than convexity, and horoconvexity seems to be a right condition. On the other hand, for $\mathbb S^2$ we expect that something weaker than convexity should be required for the uniqueness and non-degeneracy of the critical point. This is the type of statement that we prove in this paper. Moreover, surprisingly enough, we are also able to go beyond horoconvexity in the hyperbolic setting, and to  extend the result of \cite{GP1} to a class of domains larger than horoconvex domains.

\subsection{Statement of the main results}

In order to state the main results of the paper we recall the definition of uniformly star-shaped domain.
\begin{defn}\label{starS}
We say that a smooth domain $\Omega$ in $\mathbb S^2$ or $\mathbb H^2$ is {\it uniformly star-shaped} with respect to some point $P\in\Omega$ if 
\begin{itemize}
\item for any $p\in\partial\Omega$ the shortest geodesic from $P$ to $p$ is contained in $\Omega$;
\item there exists $\delta>0$ such that $\langle \nu,\vec{e_r}\rangle\geq\delta$ on $\partial\Omega$, where $\nu$ is the unit outer normal to $\partial\Omega$, and $\vec{e_r}$ is the coordinate vector field associated to the radial coordinate $r$ based at the point $P$.
\end{itemize}
In the case of $\mathbb S^2$ we also have $-P\notin\Omega$.
\end{defn}

The first result concerns  {\it spherical domains}. In the spherical case it is customary to denote the radial coordinate with respect to a point $P$ with $\theta$ (instead of $r$), so we will adopt this notation as well.

\begin{thm}\label{main}
Let $\Omega$ be a smooth domain of $\mathbb S^2$, uniformly star-shaped with respect to $P\in\Omega$. Let $\nu$ be the unit outer normal to $\partial\Omega$ and let $\kappa$ be the geodesic curvature of $\partial\Omega$ with respect to $\nu$. Assume that
\begin{equation}\label{assumption}
\cos(\theta)\kappa+\sin(\theta)\langle\nu,\vec{e_{\theta}}\rangle>0\,\ \ \ {\rm\ on\ }\partial\Omega\,,
\end{equation}
where $\theta$ is the geodesic distance from $P$ and $\vec{e_{\theta}}$ is the corresponding coordinate vector field. Then any positive, semi-stable solution of \eqref{eq1} has a unique non-degenerate critical point, which is a maximum.

Finally \eqref{assumption} is sharp in the sense that, for any fixed $n\in\mathbb N$  there exists $b_0>0$ and a family $\{\Omega_b\}_{b\in(0,b_0)}$ of domains such that
\begin{equation}\label{assumptionmf}
\lim\limits_{b\to0}(\cos(\theta)k_{\Omega_b}+\sin(\theta)\langle\nu,\vec{e_{\theta}}\rangle)\ge0\,\ \ \hbox{ on }\partial\Omega_b
\end{equation}
and the solution $u_b$ of the problem
\begin{equation}
\begin{cases}
-\Delta u=1\,, & {\rm in\ }\Omega_b,\\
u=0\,, & {\rm on\ }\partial\Omega_b.
\end{cases}
\end{equation}
admits at least $n$ maxima for all $b\in(0,b_0.)$
\end{thm}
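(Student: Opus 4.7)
The theorem comes in two parts: uniqueness and non-degeneracy of the critical point (Part 1), and sharpness of the geometric assumption (Part 2).

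For \textbf{Part 1}, the plan is to adapt the Poincaré-Hopf vector-field strategy of \cite{CC,DRG1} developed for planar convex domains and applied on the sphere in \cite{GP1}. Working in geodesic polar coordinates $(\theta,\phi)$ centered at $P$, the Hopf lemma gives $\nabla u=-|\nabla u|\nu$ on $\partial\Omega$, so $\nabla u$ is a non-vanishing inward-pointing vector field along $\partial\Omega$. Since $u>0$ in $\Omega$ and $u=0$ on $\partial\Omega$, the minimum of $u$ occurs on the boundary, so every interior critical point is either a maximum or a saddle. Poincaré-Hopf applied to $\nabla u$ on $\overline{\Omega}$ (homeomorphic to a disk by uniform star-shapedness, hence $\chi(\overline{\Omega})=1$) yields $\#\{\mathrm{max}\}-\#\{\mathrm{saddles}\}=1$. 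To rule out saddles, I introduce an auxiliary scalar function $v$ obtained by differentiating $u$ along a suitable (conformally weighted Killing-type) vector field on $\mathbb{S}^2$ based at $P$; with the right choice, $v$ solves the linearized equation $-\Delta v=f'(u)v$ in $\Omega$. Semi-stability of $u$, combined with strict monotonicity of the first Dirichlet eigenvalue of $-\Delta-f'(u)I$ on relatively compact subdomains, forces every nodal domain of $v$ to meet $\partial\Omega$.

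The geometric assumption \eqref{assumption} enters through a careful boundary calculation. Using $\nabla u=-|\nabla u|\nu$ and the Frenet equations for $\partial\Omega\subset\mathbb{S}^2$ (together with the covariant derivatives of $\vec{e_{\theta}},\vec{e_{\phi}}$ obtained from the metric $d\theta^2+\sin^2\theta\,d\phi^2$), one computes the derivative of $v|_{\partial\Omega}$ along $\partial\Omega$ at the finitely many points where $v|_{\partial\Omega}$ vanishes, and identifies it as proportional (up to a strictly positive factor) to $\cos\theta\,\kappa+\sin\theta\langle\nu,\vec{e_{\theta}}\rangle$. Thus hypothesis \eqref{assumption} forces $v|_{\partial\Omega}$ to have exactly two sign changes, so $v$ has exactly two nodal components separated by a single embedded arc $N\subset\Omega$. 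Every critical point of $u$ lies on $N$, and combining the Poincaré-Hopf count with semi-stability on $N$ produces a unique non-degenerate maximum.

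For \textbf{Part 2}, I construct a one-parameter family $\{\Omega_b\}_{b\in(0,b_0)}$ of $\mathbb{Z}_n$-symmetric \emph{flower} domains with $n$ thin elongated petals symmetrically arranged around $P$. The boundary profile is engineered so that $\cos\theta\,k_{\Omega_b}+\sin\theta\langle\nu,\vec{e_{\theta}}\rangle$ is strictly positive on $\partial\Omega_b$ for $b>0$ but tends to zero uniformly as $b\to 0^+$, realizing \eqref{assumptionmf}. By $\mathbb{Z}_n$-symmetry the torsion solution $u_b$ inherits the symmetry, so any non-central critical point appears in an orbit of $n$ equivalent points. A matched asymptotic analysis on each petal, comparing $u_b$ with the torsion function on a long thin spherical slab (which has a single interior maximum by a one-dimensional reduction), shows that each petal hosts a local maximum of $u_b$ for $b$ small enough; symmetry then yields at least $n$ distinct maxima. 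The main obstacle is the choice of auxiliary vector field and the boundary identity in Part 1: singling out the right (possibly conformally weighted) Killing-type vector field so that (a) its directional derivative $v$ satisfies a usable linear equation under semi-stability, and (b) the rate of sign change of $v|_{\partial\Omega}$ at its zeros takes exactly the form $\cos\theta\,\kappa+\sin\theta\langle\nu,\vec{e_{\theta}}\rangle$. This identity is the precise mechanism making \eqref{assumption} sharp, and is mirrored in Part 2 by the delicate engineering of the flowers so that \eqref{assumption} degenerates to equality in the limit without losing uniform star-shapedness.
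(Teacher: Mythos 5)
Your Part 1 plan is in the spirit of the paper (Poincar\'e--Hopf, Killing-field auxiliary functions, semi-stability), but it differs at the decisive step, and in a way that introduces a gap. Applying Poincar\'e--Hopf directly to $\nabla u$ to get $\#\{\text{maxima}\}-\#\{\text{saddles}\}=1$ already presupposes that every critical point is non-degenerate; a degenerate critical point (e.g.\ a monkey saddle) has index $\le -2$, so the naive max/saddle count is not available until non-degeneracy has been established. The paper circumvents this by never applying Poincar\'e--Hopf to $\nabla u$: it constructs the auxiliary vector field $V=\tilde V^{\perp}$ with $\tilde V=\langle K_1,\nabla u\rangle\nabla\langle K_2,\nabla u\rangle-\langle K_2,\nabla u\rangle\nabla\langle K_1,\nabla u\rangle$, where $K_1,K_2$ are \emph{genuine} Killing fields (the two rotation fields $z\partial_x-x\partial_z$, $z\partial_y-y\partial_z$ restricted to $\mathbb S^2$), and proves that at every zero of $V$ the linearization has determinant $(\cos\theta\det D^2u)^2>0$, so \emph{every} zero of $V$ has index $+1$ regardless of whether it is a max or a saddle of $u$. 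That is what makes the Poincar\'e--Hopf count close on itself. Your ``conformally weighted Killing-type'' field is vague and, in particular, would not commute with $\Delta$, which is the crucial property (\eqref{commuting}) that lets $Z=\langle K,\nabla u\rangle$ solve the linearized equation. You also miss a genuinely subtle point of the spherical case: $K_1,K_2$ vanish on the equator (with pole $P$), so $V$ has extra zeros on $\Omega\cap\{\theta=\pi/2\}$ that are not critical points of $u$. These must be shown to have index $1$ as well and then be ruled out a posteriori; this is a new phenomenon relative to the Euclidean and hyperbolic cases and is not addressed in your sketch. The boundary identity you describe is essentially the paper's computation $\langle V,\nu\rangle=|\nabla u|^2(\cos\theta\,\kappa+\sin\theta\langle\nu,\vec{e_\theta}\rangle)$, but it is used differently: in the paper it directly verifies the boundary hypothesis of Poincar\'e--Hopf for $V$, rather than controlling sign changes of $v|_{\partial\Omega}$ as you propose.

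For Part 2, your $\mathbb{Z}_n$-symmetric ``flower'' with $n$ thin petals is not the paper's construction, and it is very unlikely to satisfy \eqref{assumptionmf}: at the indentations between petals the geodesic curvature $\kappa$ is strongly negative, and there $\theta$ is small (the indentations lie near $P$), so the correcting term $\sin\theta\langle\nu,\vec{e_\theta}\rangle$ cannot compensate and the quantity $\cos\theta\,\kappa+\sin\theta\langle\nu,\vec{e_\theta}\rangle$ would be bounded away from $0$ from below by a negative constant, not tending to $\ge 0$. The paper instead starts from the explicit torsion function on a \emph{strip} around a geodesic (for which $\cos\theta\,\kappa+\sin\theta\langle\nu,\vec{e_\theta}\rangle$ is identically zero), perturbs it by $b\eta\,\mathrm{Re}\bigl(\prod_i(z^2-a_i^2)\bigr)$, and takes $\Omega_b$ to be a connected component of the zero superlevel set; the domains collapse to a geodesic segment, not to a point, and the $n$ maxima come from the $n$ minima of the polynomial $f(x)=\prod_i(x^2-a_i^2)$, not from a discrete rotational symmetry. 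The paper explicitly remarks that collapsing onto a geodesic appears necessary for the limit \eqref{assumptionmf} to hold, which is precisely the feature your flower picture lacks.
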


The geometric condition \eqref{assumption} involves both the geodesic curvature of the boundary and the constant $\delta$ in the definition of the uniform star-shapedness (point $ii)$ of Definition \ref{starS}); moreover, the condition \eqref{assumption} takes into account the possibility that a domain exceeds the equator. Note that all convex domains satisfy the hypothesis of Theorem \ref{main}. In Section \ref{examples} we provide examples of non-convex domains and domains exceeding the equator satisfying the assumptions of Theorem \ref{main}.

The second result concerns {\it hyperbolic domains}. It extends the result of \cite{GP1} to class of domains larger than horoconvex domains.

\begin{thm}\label{main2}
Let $\Omega$ be a bounded smooth domain of $\mathbb H^2$, uniformly star-shaped with respect to $P\in\Omega$. Let $\nu$ be the unit outer normal to $\partial\Omega$ and let $\kappa$ be the geodesic curvature of $\partial\Omega$ with respect to $\nu$. Assume that
\begin{equation}\label{assumption2}
\cosh(r)\kappa-\sinh(r)\langle\nu,\vec{e_{r}}\rangle>0\,\ \ \ {\rm\ on\ }\partial\Omega\,,
\end{equation}
where $r$ is the geodesic distance from $P$ and $\vec{e_r}$ is the corresponding coordinate vector field. Then any positive, semi-stable solution of \eqref{eq1} has a unique non-degenerate critical point, which is a maximum.  Finally \eqref{assumption2} is sharp in the sense that, for any fixed $n\in\mathbb N$  there exists $b_0>0$ and a family $\{\Omega_b\}_{b\in(0,b_0)}$, of domains such that
\begin{equation}\label{assumptionmf2}
\lim\limits_{b\to0}(\cosh(r)k_{\Omega_b}-\sinh(r)\langle\nu,\vec{e_r}\rangle)\ge0\,\ \ \hbox{ on }\partial\Omega_b
\end{equation}
and the solution $u_b$ of the problem
\begin{equation}
\begin{cases}
-\Delta u=1\,, & {\rm in\ }\Omega_b,\\
u=0\,, & {\rm on\ }\partial\Omega_b.
\end{cases}
\end{equation}
admits at least $n$ maxima for all $b\in(0,b_0)$.
\end{thm}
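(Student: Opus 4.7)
The strategy for the first part of Theorem \ref{main2} follows the approach of \cite{GP1}: apply the Poincar\'e-Hopf theorem to a smooth vector field $W$ on $\bar\Omega$ whose interior zeros are exactly the critical points of $u$. Since $\Omega$ is uniformly star-shaped with respect to $P$, it is simply connected, $\chi(\Omega)=1$, and hence $\sum_{p}\mathrm{ind}_p(W)=1$. The objective is to prove that every local index equals $+1$, which forces the critical point to be unique, non-degenerate, and a maximum.

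I would work in hyperbolic polar coordinates $(r,\theta)$ based at $P$, in which the metric reads $dr^2+\sinh^2(r)\,d\theta^2$. Because $\partial_\theta$ generates a one-parameter group of isometries of $\mathbb H^2$, the angular derivative $u_\theta$ solves the linearized equation $-\Delta u_\theta = f'(u)\,u_\theta$ in $\Omega$. Hopf's lemma together with uniform star-shapedness yields $u_r = u_\nu\langle \nu,\vec{e_r}\rangle<0$ on $\partial\Omega$. The vector field $W$ would be a linear combination
\[
W = A(r)\,u_r\,\vec{e_r} + B(r)\,u_\theta\,\partial_\theta,
\]
with $r$-dependent coefficients $A$ and $B$ chosen so that on $\partial\Omega$ (where $\nabla u = u_\nu\nu$) the flux $\langle W,\nu\rangle$ becomes, up to a positive factor like $u_\nu^2$, the expression $\cosh(r)\kappa - \sinh(r)\langle\nu,\vec{e_r}\rangle$ appearing in \eqref{assumption2}. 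The correct $A,B$ are dictated by the Frenet-type identity $D_\tau\nu = \kappa\tau$ on $\partial\Omega$ together with the decomposition of $\nabla_\tau\vec{e_r}$ in the moving frame $(\tau,\nu)$ in hyperbolic geometry. Assumption \eqref{assumption2} then gives $\langle W,\nu\rangle>0$ on $\partial\Omega$, ruling out boundary zeros.

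For the interior analysis, the Jacobian of $W$ at an interior zero $p$ can be expressed through $\nabla^2 u(p)$ and the equation $-\Delta u=f(u)$. The semi-stability hypothesis is used to force the determinant to have the correct sign: if $\mathrm{ind}_p(W)=-1$ (saddle case), a standard cut-off construction near $p$ would produce a non-trivial test function $\varphi\in H^1_0(\Omega)$ with $\int_\Omega(|\nabla\varphi|^2-f'(u)\varphi^2)<0$, contradicting the non-negativity of $-\Delta-f'(u)I$. Hence each index equals $+1$, every critical point is a non-degenerate local maximum, and Poincar\'e-Hopf forces uniqueness.

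For the sharpness statement I would construct $\Omega_b$ as a smooth star-shaped domain with $n$-fold rotational symmetry about $P$, shaped as a ``flower'' with $n$ radially elongated lobes parametrized by $b$, tuned so that $\cosh(r)k_{\Omega_b}-\sinh(r)\langle\nu,\vec{e_r}\rangle\to 0^+$ on $\partial\Omega_b$ as $b\to 0^+$. As $b\to 0$ the necks between consecutive lobes pinch and the domain degenerates to $n$ cap-like components joined only through $P$; a matched asymptotic expansion shows that the torsion function $u_b$ decouples into $n$ essentially independent lobe contributions, each with its own interior maximum, yielding at least $n$ maxima for $b$ small. The main obstacle is the precise identification of $A(r)$ and $B(r)$ in $W$ so that the boundary flux coincides exactly with the combination in \eqref{assumption2} while the semi-stability argument still yields $\mathrm{ind}_p(W)=+1$; this algebraic-geometric matching is the technical core of both the spherical and the hyperbolic theorems.
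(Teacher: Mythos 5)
Your plan diverges from the paper's proof in two respects, and in both cases the gap is more than cosmetic.

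\textbf{The vector field and the index computation.} You propose $W=A(r)u_r\vec{e_r}+B(r)u_\theta\partial_\theta$, built from the single rotational Killing field $\partial_\theta$ centred at $P$. At a non-degenerate interior zero of $\nabla u$ the Jacobian of such a $W$ is, up to the positive factor $A(r)B(r)$, governed by $u_{rr}u_{\theta\theta}-u_{r\theta}^2=\det D^2u$, so at a saddle of $u$ one gets $\mathrm{ind}_p(W)=-1$. Your proposed fix --- that semi-stability rules out index $-1$ via a local cut-off test function --- does not work: the torsion problem $f\equiv1$ is \emph{always} semi-stable (since $f'\equiv0$), yet the torsion function on a dumbbell-shaped domain has a saddle. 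So semi-stability alone cannot exclude saddles; the domain geometry must enter, and your sketch never uses \eqref{assumption2} at an interior point. The paper's vector field $V=\tilde V^\perp$ with $\tilde V=\langle K_1,\nabla u\rangle\nabla\langle K_2,\nabla u\rangle-\langle K_2,\nabla u\rangle\nabla\langle K_1,\nabla u\rangle$, built from \emph{two} translation-type Killing fields of $\mathbb H^2$ (not the rotation about $P$), is designed precisely so that $\det DV\propto(\det D^2u)^2\ge0$ at any non-degenerate critical point, giving $\mathrm{ind}_pV=+1$ automatically --- even at saddles. Uniqueness then falls out of $\sum\mathrm{ind}_pV=\chi(\Omega)=1$. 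Semi-stability is used elsewhere: to prove non-degeneracy of critical points via a nodal-loop argument with $Z=\langle K,\nabla u\rangle$, which in turn relies on the key Lemma (Subsection \ref{twozeros2}) that $Z$ has exactly two zeros on $\partial\Omega$ --- and this is exactly where \eqref{assumption2} enters, as a monotonicity property of a function $F(t)$ along $\partial\Omega$. Your proposal has no analogue of this boundary lemma, which is the technical heart of the argument.

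\textbf{The sharpness construction.} You propose a ``flower'' domain with $n$-fold rotational symmetry, degenerating to $n$ lobes joined at $P$. This is structurally different from the paper's construction, and there is good reason to doubt it would satisfy \eqref{assumptionmf2}: a pinching neck produces unboundedly negative geodesic curvature on the inner side of the neck, so $\cosh(r)\kappa-\sinh(r)\langle\nu,\vec{e_r}\rangle$ would tend to $-\infty$ there rather than to a non-negative limit. The paper instead perturbs the explicit torsion function $\psi_b=-\tfrac12\log\bigl(1+\tfrac{4y^2}{(1-x^2-y^2)^2}\bigr)+b$ of a geodesic strip $\mathcal S_b$ by a harmonic polynomial $v=\mathrm{Re}\,\prod(z^2-a_i^2)$, takes $u_b=\psi_b-b\eta v$ and $\Omega_b$ the component of $\{u_b>0\}$ through the origin; $\Omega_b$ collapses to a geodesic segment (not to a flower), and the verification of \eqref{assumptionmf2} is a careful asymptotic computation of $\kappa_{\Omega_b,E}$ and $\langle\nu_E,(x,y)\rangle_E$ (Lemma \ref{sharp}). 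The authors explicitly remark that degeneration to a geodesic appears to be forced; your construction ignores this constraint.
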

The geometric condition \eqref{assumption2}, as in the spherical case, involves both the geodesic curvature of the boundary and the constant $\delta$ in the definition of uniform starshapedness; we note that all smooth horoconvex domains satisfy \eqref{assumption2} (hence we recover the result in \cite{GP1}); moreover, condition \eqref{assumption2} is satisfied also by certain convex but not horoconvex domains, see Section \ref{examples}.

The second part of the statements of Theorems \ref{main} and \ref{main2} shows that the geometric conditions \eqref{assumption} and \eqref{assumption2}, which ensure the uniqueness of the critical point of $u$, are indeed optimal. We focus on the case of the torsion problem (i.e., $f \equiv 1$ in \eqref{eq1}) and construct examples of solutions to \eqref{eq1} in a family of geodesically star-shaped domains of $M = \mathbb{S}^2$ or $\mathbb{H}^2$, which exhibit an arbitrary number of critical points and satisfy \eqref{assumptionmf} and \eqref{assumptionmf2}.

This type of examples first appeared in \cite{GG22} and \cite{DRG1}, where the authors demonstrated the optimality of the convexity assumption on $\Omega$ in the result of \cite{CC} when $\Omega \subset \mathbb{R}^2$. The example in \cite{GG22} was later extended in \cite{EGG25} to the setting of a general $d$-dimensional Riemannian manifold $M$, as well as to equations involving general nonlinearities $f(u)$.

Here, we construct more explicit examples of solutions to \eqref{eq1}, which preserve all the key features of those in \cite{GG22} and \cite{EGG25}, directly within $\mathbb{S}^2$ and $\mathbb{H}^2$. These examples are, in a sense, pathological. We shall see that the associated domains converge to suitable geodesic segments. Such pathological behavior of the domain is indeed necessary, since small perturbations of a convex domain do not produce additional critical points, as shown in \cite{BDM}.

We remark that the fact that the limiting configuration is built around a geodesic  appears to be a necessary condition for this type of construction, and we believe that it holds in a more general setting. We intend to investigate this phenomenon in future papers.

\subsection{Strategy of the proof} We give here some ideas about the proof of Theorem \ref{main}, namely, of the {\it spherical case}. The proof of Theorem \ref{main2} follows the same lines (it is actually simpler). 

The proof of Theorem \ref{main} is reminiscent of \cite{GP1}, however it differs in many aspects. As in \cite{GP1}, the most important tool that we use is the Poincaré-Hopf Theorem, which relates the index of the zeros of any vector field $V$ on a differentiable manifold with its topology through the Euler characteristic of the manifold, see Theorem \ref{PH}. In our setting, we are dealing with contractible sets, hence Theorem \ref{PH} gives
\begin{equation}\label{PH0}
\sum_i{\rm Ind}_{p_i}V=1,
\end{equation}
where the sum runs on the {\it isolated zeros} of $V$. One would be tempted to use \eqref{PH0} with $V=\nabla u$ and recover somehow a counting of the critical points of $u$. Unfortunately, it could be difficult to work directly on $\nabla u$ and check if its zeros are singular and what is their index. Following some ideas of \cite{GP1}, we consider here an auxiliary vector field $V$ for which we have a way to determine the non-degeneracy and the index of the corresponding zeros, and hence we can apply \eqref{PH0}. We then relate the zeros of $V$ to the zeros of $\nabla u$ and conclude.

The vector field $V$ is obtained by combining in a suitable way $K_1,K_2$ and $\nabla u$, where $K_1,K_2$ are two Killing fields on the sphere (see \eqref{V0} and \eqref{V1}). Extrinsically, $K_1,K_2$ can be seen as the restriction to $\mathbb S^2$ of the fields $z\partial_x-x\partial_z$ and $z\partial_y-y\partial_z$ of $\mathbb R^3$, respectively.

The proof of Theorem \ref{main} goes roughly as follows:

\begin{itemize}
\item We first observe that critical points of $u$ are non-degenerate. Assuming that a critical point $p$ of $u$ is degenerate, we can define an auxiliary function $Z:\Omega\to\mathbb R$ which vanishes along with its gradient at $p$. Then $p$ is a singular point of $Z$, and this implies that the zero level set of $Z$ is given by the union of at least two curves intersecting transversally at $p$. We also show that condition \eqref{assumption} in Theorem \ref{main} implies that $Z$ must have exactly two zeros at $\partial\Omega$. We reach a contradiction since now $Z$ turns out to be a Dirichlet eigenfunction of $-\Delta-f'(u)$ with eigenvalue $0$ on a proper subdomain of $\Omega$, violating the non-negativity of  $-\Delta-f'(u)$ in $\Omega$. With the same argument we prove that there are no critical points on the equator of the sphere with pole $P$ (if the intersection of $\Omega$ with the equator is nonempty).
\item The previous step allows to prove that the critical points of $u$ coincide with the zeros of $V$ not belonging to the equator. One implication is straightforward from the definition of $V$ (see \eqref{V0}, \eqref{V1}). The proof of the second implication is in the same spirit of point $i)$, using a suitable (possibly different) auxiliary function $Z$. Here we note a crucial difference with respect to \cite{GP1}: the vector field $V$ may have zeros that are not critical points of $u$ and that must be carefully considered; this does not happen in the Euclidean case and in the hyperbolic case. This fact will have also important consequences on the location of critical points for convex spherical sets, which is a new information provided by the spherical geometry, see Subsection \ref{sub:location}.
\item We relate the index of $p$ as zero of $\nabla u$ to that of $p$ as zero of $V$ (recall that $V$ and $\nabla u$ share the same zeros outside the equator from $ii)$), and prove that ${\rm Ind}_pV=1$ for {\bf all}  such zeros. We also prove that ${\rm Ind}_pV=1$ also for the zeros of $V$ on the equator. 
\item Finally we prove that condition \eqref{assumption} in Theorem \ref{main} implies $\langle V,\nu\rangle>0$ on $\partial\Omega$. We are then in position to apply the Poincaré-Hopf Theorem to $V$: from \eqref{PH0} we get
$$
1\leq \#\{{\rm critical\ points\ of\ }u\}\leq\sum_i{\rm Ind}_{p_i}V=\chi(\Omega)=1,
$$
which gives the uniqueness of the zero of $V$ and its non-singularity. This is equivalent to the uniqueness of the critical point of $u$ and its non-degeneracy.
\end{itemize}
Some of the previous points make use of ideas introduced by Cabrè and Chanillo in \cite{CC}.

The present paper is organized as follows. In Section \ref{pre} we collect a few preliminary results on vector fields on the sphere, on the Poincaré-Hopf Theorem and on the behavior of solutions of elliptic PDE's near singular zeros. The proof of the first part of Theorem \ref{main} (concerning the uniqueness of the critical point under condition \eqref{assumption}) is presented in Section \ref{proof}, while the proof of the first part of Theorem \ref{main2} (concerning the uniqueness of the critical point under condition \eqref{assumption2}) is presented in section \ref{proof2}.  In Section \ref{s10} we prove the optimality of  conditions \eqref{assumption} and \eqref{assumption2},  completing the proofs of Theorems \ref{main} and \ref{main2}. Finally, in Section \ref{examples} we discuss some classes of domains for which Theorems \ref{main} and \ref{main2} apply. In particular, in Subsection \ref{sub:location} we provide information on the location of the critical point for convex spherical domains of diameter larger than $\pi/2$.

\section{Preliminaries and notation}\label{pre}
In this section we collect a few preliminary results and we fix the notation. 

\subsection{Vector fields}

In this paper we consider as ambient manifolds the sphere $\mathbb S^2$ and the hyperbolic plane $\mathbb H^2$  endowed with the standard metrics of constant curvature $1$ and $-1$, respectively. In particular, $\mathbb S^2=\{(x,y,z)\in\mathbb R^3:x^2+y^2+z^2=1\}$ and the round metric on $\mathbb S^2$ is the one induced by the euclidean metric on $\mathbb R^3$. Sometimes, it will be convenient to look extrinsically at $\mathbb S^2$ as a submanifold of $\mathbb R^3$. We shall denote by $\langle\cdot,\cdot\rangle$ the inner product on tangent spaces on $\mathbb S^2$ and $\mathbb H^2$ associated with the chosen standard metrics.

\subsubsection{Killing vector fields}\label{fields}
We recall that a {\it Killing} vector field $K$ on a Riemannian manifold $M$ is characterized by the fact that the Lie derivative of the metric with respect to $K$ vanishes. This is equivalent to
\begin{equation}\label{antisym}
\langle\nabla_XK,Y\rangle+\langle\nabla_YK,X\rangle=0
\end{equation}
for all vector fields $X,Y$. If $K$ is a Killing field, then ${\rm div}\,K=0$; if a Killing field $K$ is pointwise tangent to an embedded submanifold, then its restriction is a Killing field for the submanifold and for the induced metric; $K$ is a Killing vector field if and only if for any smooth function $u:M\to\mathbb R$ 
\begin{equation}\label{commuting}
\Delta(\langle K,\nabla u\rangle)=\langle K,\nabla\Delta u\rangle,
\end{equation} 
that is, $K$ commutes with the Laplacian.
We refer to \cite{petersen} for more information on Killing vector fields.

We consider on $\mathbb S^2$ the two Killing vector fields $K_1$ and $K_2$ given respectively by the restriction of the Killing fields $z\partial_x-x\partial_z$ and  $z\partial_y-y\partial_z$ of $\mathbb R^3$ to $\mathbb S^2$. Note that each $K_i$ has exactly two zeros, which are antipodal points on the equator of $\mathbb S^2$. Using spherical coordinates $(\theta,\phi)\in[0,\pi]\times[0,2\pi]$ on $\mathbb S^2$, where $\theta$ is the geodesic distance from the north pole $(0,0,1)$, we can write $K_1,K_2$ as
\begin{itemize}
\item[\textbullet] $K_1=\cos(\phi)\partial_{\theta}-\cot(\theta)\sin(\phi)\partial_{\phi}$; the integral curves are spherical circles centered at $(0,\pm 1,0)$;
\item[\textbullet] $K_2=\sin(\phi)\partial_{\theta}+\cot(\theta)\cos(\phi)\partial_{\phi}$; the integral curves are spherical circles centered at $(\pm 1,0,0)$.

\end{itemize}

We consider the Poincaré disk model for $\mathbb H^2$. Namely, we consider $D$ to be the open unit disk of $\mathbb R^2$ with Cartesian coordinates $(x,y)$ endowed with the conformal metric $\frac{4}{(1-x^2-y^2)^2}(dx^2+dy^2)$. On $\mathbb H^2$ we consider the two Killing vector fields $K_1$ and $K_2$ given by 
\begin{itemize}
\item[\textbullet] $K_1=\frac{1-x^2+y^2}{2}\partial_x-xy\partial_y$; the integral curves are the intersection of $D$ with arcs of circles  with centers on $x=0$ and passing through $(\pm 1,0)$;
\item[\textbullet] $K_2=-xy\partial_x+\frac{1+x^2-y^2}{2}\partial_y$; the integral curves are the intersection of $D$ with arcs of circles  with centers on $y=0$ and passing through $(0,\pm 1)$.

\end{itemize}

\subsubsection{Orientations and Hodge star operator}
We choose on $\mathbb S^2\subset\mathbb R^3$ the orientation given by the normal vector field $N$ which is outward pointing to $\mathbb S^2$, namely, $N=(x,y,z)$. Alternatively, this is the orientation inherited from the positive orientation of $\mathbb R^2$ through the stereographic projection from the south pole $(0,0,-1)$.  Let $p\in\mathbb S^2$ and take $(\vec{e_1},\vec{e_2})$ a positively oriented orthonormal basis of $T_p\mathbb S^2$. Let $(dx_1,dx_2)$ its dual basis in $T^*_p\mathbb S^2$. Then the Hodge star operator $\star$ acts as follows: $\star dx_1=dx_2$, $\star dx_2=-dx_1$. By abuse of notation, we shall identify a vector field $V$ with its dual one-form. Then we define the vector field $V^{\perp}$ as
\begin{equation}\label{perp}
V^{\perp}:=\star V.
\end{equation}
Extrinsically, $V^{\perp}$ is obtained by pointwise rotating $V$ counter-clockwise of an angle of $\pi/2$ around $N$. It can also be defined (extrinsically) as
$$
V^{\perp}=N\times V,
$$
where $\times$ is the standard cross product of $\mathbb R^3$.

Analogously, we choose on $\mathbb H^2$ the orientation inherited by the positive orientation of $D\subset\mathbb R^2$. Namely, $(\partial_x,\partial_y)$ is positively oriented. For a vector field $V$ on $\mathbb H^2$, the field $V^{\perp}$ is defined as in the case of the sphere (and can be thought as a pointwise counter-clockwise rotation of $V$ of $\frac{\pi}{2}$ in the disk model).

\subsection{Poincaré-Hopf Theorem} We state here the Poincaré-Hopf Theorem which relates the index of a vector field with the Euler characteristic of the underlying manifold. We recall that for a vector field $V$ on a differentiable manifold, the index ${\rm Ind}_{p_i}V$ at an isolated zero $p_i$ of $V$ is defined to be the degree of the self-map of $\mathbb S^{n-1}$ obtained by normalizing the vector field image of $V$  by a chart, restricted to a small sphere around the image of $p_i$.  We have the following

\begin{thm}[Poincaré-Hopf]\label{PH}
Let $M$ be a $n$-dimensional Riemannian manifold and let $V$ be a vector field on $M$ with isolated zeros $p_i$. If $\partial M\ne\emptyset$, assume that $\langle V,\nu\rangle>0$, where $\nu$ is the unit outer normal to $\partial M$. Then
$$
\sum_i{\rm Ind}_{p_i}V=\chi(M),
$$
where $\chi(M)$ is the Euler characteristic of $M$.
\end{thm}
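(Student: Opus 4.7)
The plan is to follow the standard two-step route to the Poincaré-Hopf formula. First, I would establish that the total index $\sum_i {\rm Ind}_{p_i} V$ depends only on $M$ (and not on the particular admissible $V$); second, I would compute this number using a convenient model vector field whose zeros are easy to analyze. The outward-pointing boundary condition $\langle V,\nu\rangle>0$ plays the role of a ``no zeros escape through $\partial M$'' constraint, ensuring that the count is well-defined and preserved under admissible perturbations.

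For the homotopy-invariance step, given two admissible fields $V_0,V_1$, the convex combination $V_t=(1-t)V_0+tV_1$ is itself admissible for every $t\in[0,1]$, because $\langle V_t,\nu\rangle>0$ is preserved under convex combinations. After a small generic perturbation of $V_t$ supported in ${\rm int}(M)\times[0,1]$, the zero set $Z=\{(p,t):V_t(p)=0\}$ becomes a compact smooth $1$-manifold properly embedded in ${\rm int}(M)\times[0,1]$, with $\partial Z\subset M\times\{0,1\}$ (no zero can reach $\partial M\times[0,1]$ because of the sign condition on $\nu$). A careful orientation analysis on each connected arc of $Z$ matches its two endpoints so that the corresponding local indices cancel in $\sum_i{\rm Ind}_{p_i}V_0-\sum_j{\rm Ind}_{q_j}V_1$, giving the desired equality.

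The second step is a computation with the gradient of a Morse function. I would choose $f\in C^\infty(M)$ such that $\partial M$ is a regular level set, realised as the maximum of $f$, with $\nabla f$ pointing outward on $\partial M$; such an $f$ exists by a standard handle-theoretic construction. The zeros of $V:=\nabla f$ are exactly the critical points of $f$, and at a critical point of Morse index $\lambda$ the Hessian has signature $(n-\lambda,\lambda)$, so the local degree equals $\operatorname{sgn}\det({\rm Hess}\,f)=(-1)^\lambda$. Summing, $\sum_p{\rm Ind}_p\nabla f=\sum_\lambda (-1)^\lambda c_\lambda$, where $c_\lambda$ counts the critical points of index $\lambda$. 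Standard Morse theory provides a CW structure on $M$ with exactly $c_\lambda$ cells of dimension $\lambda$, whence $\sum_\lambda(-1)^\lambda c_\lambda=\chi(M)$.

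The main obstacle I would expect is the orientation and degree bookkeeping in the first step: one must verify that along an arc of $Z$ whose endpoints lie on the same slice $M\times\{i\}$ the two local indices appear with opposite signs, while arcs travelling from $M\times\{0\}$ to $M\times\{1\}$ yield matching contributions. A cleaner route that sidesteps both the homotopy argument and Morse theory is the combinatorial one: fix a smooth triangulation of $M$, construct explicitly from the barycentric subdivision a vector field with exactly one non-degenerate zero at the barycenter of each simplex $\sigma$ and of index $(-1)^{\dim\sigma}$, then read off $\sum_\sigma (-1)^{\dim\sigma}=\chi(M)$ from the definition of the Euler characteristic.
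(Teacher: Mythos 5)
The paper does not give a proof of Theorem \ref{PH}: it is a classical result, and the authors simply cite Milnor for it. So there is no ``paper's own proof'' to compare against; what follows is an assessment of your sketch on its own merits, together with a note on the route Milnor actually takes.

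Your main route (homotopy invariance of the total index via a one-parameter family, then evaluation on $\nabla f$ for a Morse function with $\partial M$ as the top regular level set) is a correct and standard line of proof. The convex-combination trick for preserving $\langle V_t,\nu\rangle>0$, the cobordism argument on the one-manifold $Z$, and the identity $\sum_\lambda(-1)^\lambda c_\lambda=\chi(M)$ from the handle decomposition are all sound, and you correctly identified where the orientation bookkeeping has to be done (same-slice endpoints cancel, cross-slice endpoints match). Milnor's own argument is different: he first proves the Hopf theorem for compact regular domains in $\mathbb R^n$ by identifying the index sum with the degree of the Gauss map of the boundary, and then reduces the abstract case to this one by a tubular-neighbourhood/double argument. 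Your Morse-theoretic route is closer in spirit to the treatment in, e.g., Guillemin--Pollack or Hirsch. Both are legitimate; Milnor's avoids Morse theory, yours avoids the Gauss map and embedding.

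The ``cleaner'' combinatorial alternative in your last paragraph, however, has a genuine gap for manifolds with boundary, which is exactly the case the statement covers. The barycentric vector field on a triangulation of $M$ has a zero at the barycenter of \emph{every} simplex, including the simplices of $\partial M$; hence that field vanishes on $\partial M$ and does not satisfy $\langle V,\nu\rangle>0$, so one cannot ``read off'' $\sum_\sigma(-1)^{\dim\sigma}=\chi(M)$ and quote the theorem for it. If instead you push the field to be outward-pointing along a collar and keep only the interior barycentric zeros, the alternating sum you obtain is
\[
\sum_{\sigma\not\subset\partial M}(-1)^{\dim\sigma}=\chi(M)-\chi(\partial M)=(-1)^n\chi(M),
\]
which agrees with $\chi(M)$ only when $\chi(\partial M)=0$ (e.g.\ $n$ even, or $\chi(M)=0$). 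The simplest counterexample is $M=[0,1]$: the interior barycentric zero has index $-1$, while $\chi(M)=1$. So this shortcut works for closed manifolds but needs a real modification near $\partial M$ (for instance a matching collar field plus an index computation on a retracted copy of $M$) before it establishes the boundary version of the theorem. I would either drop that paragraph or flag it as applying to the closed case only, and rely on your Morse-theoretic computation for the boundary case.
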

We refer e.g., to \cite{milnor} for precise definitions and for a proof of Theorem \ref{PH}. We conclude this subsection by noting that if a vector field $V$ on a manifold $M$ has a zero $p$ which is non-degenerate, i.e., ${\rm det\,}(\partial_{x_j}V_i(0))\ne 0$, then ${\rm Ind}_pV={\rm sign}\,{\rm det}(\partial_{x_j}V_i(0))$. Here $(x_1,...,x_n)$ is a local system of coordinates around $p$ (which corresponds to $0\in\mathbb R^n$) and $V=\sum_{i=1}^nV_i\partial_{x_i}$.

\subsection{Nodal sets of solutions of elliptic PDE's}

We recall here the following classical result on the behavior of solutions to elliptic equations on domains of $\mathbb S^2$ and $\mathbb H^2$ at a singular zero, namely, at a point $q$ where $v(q)=0$ and $\nabla v(q)=0$. For a proof, we refer to \cite{HS}.
\begin{thm}\label{nod1}
Let $v$ be a solution of $-\Delta v=a v$ on $\Omega\subset \mathbb S^2,\mathbb H^2$, where $a$ is a smooth function. Let $q\in\Omega$ be such that $v(q)=\nabla v(q)=0$. Then, in a neighborhood of $q$ the set $v^{-1}(0)$ is given by (at least) two curves which intersect transversely.
\end{thm}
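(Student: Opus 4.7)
The plan is to reduce the problem to a Euclidean local statement and then apply the classical Hartman--Wintner / Bers structure theorem describing solutions of second-order linear elliptic equations near a singular zero. First I would introduce conformal (isothermal) coordinates $(x_1,x_2)$ centered at $q$ so that the metric reads $g=e^{2\phi}(dx_1^2+dx_2^2)$ for some smooth $\phi$. Then $\D_g=e^{-2\phi}\D$, where $\D$ is the flat Laplacian, and the equation $-\D_g v=av$ becomes $-\D v=\tilde a\, v$ with $\tilde a:=e^{2\phi}a$ smooth and bounded near $0$; this reduces the problem to the planar case.

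Next I would invoke the Hartman--Wintner / Bers lemma. A nontrivial smooth solution of such an equation vanishes to finite order at any point (this follows from unique continuation for linear elliptic operators with bounded coefficients), and if that order is $N$ then there exists a nonzero homogeneous harmonic polynomial $P_N$ of degree $N$ such that
$$
v(x)=P_N(x)+O(|x|^{N+1}),\qquad \na v(x)=\na P_N(x)+O(|x|^{N}).
$$
The hypothesis $v(q)=\na v(q)=0$ forces $N\ge 2$.

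Now the very rigid structure of harmonic polynomials in two variables takes over: in polar coordinates $(\rho,\vartheta)$ one has $P_N=\rho^N(\alpha\cos N\vartheta+\beta\sin N\vartheta)$, whose zero set is the union of $N$ straight lines through the origin meeting at equal angles $\pi/N$. Setting $V(\rho,\vartheta):=\rho^{-N}v(\rho\cos\vartheta,\rho\sin\vartheta)$, the expansion above gives a $C^1$ extension up to $\rho=0$ with $V(0,\vartheta)=\alpha\cos N\vartheta+\beta\sin N\vartheta$. This boundary trigonometric polynomial has $2N$ simple zeros $\vartheta_j$ in $[0,2\pi)$, so $\de_\vartheta V(0,\vartheta_j)\ne 0$ at each of them, and the implicit function theorem yields $2N$ smooth arcs $\vartheta=\vartheta_j(\rho)$ emanating from $\rho=0$ which together exhaust $v^{-1}(0)$ locally. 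Since $\vartheta_j$ and $\vartheta_j+\pi$ correspond to opposite rays of a single tangent line at $q$, the $2N$ arcs pair into $N\ge 2$ smooth curves through $q$ with pairwise distinct tangent directions, hence meeting transversely.

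The main obstacle is the Bers / Hartman--Wintner asymptotic expansion itself: one needs to establish finite order of vanishing (a unique-continuation / Carleman-estimate type statement) and then construct the harmonic leading polynomial via a blow-up argument $v_\rho(y):=\rho^{-N}v(\rho y)$ followed by passing to the limit, together with a refinement showing that the pairing of arcs actually produces $C^1$ curves across $q$. Once this analytic input is in hand, the geometric conclusion on the nodal set is a direct application of the implicit function theorem in polar coordinates.
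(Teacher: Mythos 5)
The paper does not prove this statement; it simply cites the reference \cite{HS} for the result. Your argument is a correct and essentially self-contained reconstruction of the classical proof that one finds in such references: pass to isothermal coordinates to reduce to a planar equation $-\Delta v = \tilde a v$, apply the Hartman--Wintner/Bers asymptotics $v = P_N + O(|x|^{N+1})$ with $P_N$ a nonzero homogeneous harmonic polynomial, observe $N\ge 2$ from the vanishing of $v$ and $\nabla v$ at $q$, and then extract the $N\ge 2$ transversally intersecting nodal arcs by the implicit function theorem in polar coordinates. The one step you flag as needing care — finite vanishing order and the existence of the harmonic leading term — is indeed the analytic crux, and it is exactly what the cited structure theorem provides; your sketch of the blow-up argument $\rho^{-N}v(\rho y)$ is the standard way to obtain it. So your proposal is correct and coincides with the intended route, just spelled out rather than delegated to a citation.
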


\section{Proof of Theorem \ref{main}  (uniqueness of the critical point)}\label{proof}

In this section we will prove Theorem \ref{main} is several steps. We start by noting that by hypothesis $f(0)\geq 0$ and this implies, by Hopf's lemma, that $|\nabla u|\ne 0$ on $\partial\Omega$. In particular, since $u>0$ in $\Omega$, we have that the unit outer normal to $\partial\Omega$ is given by
\begin{equation}\label{3.1}
\nu=-\frac{\nabla u}{|\nabla u|}.
\end{equation}

By assumption $\Omega$ is uniformly star-shaped with respect to $P\in\Omega$. We assume from now on that $P$ is the north pole for a system of spherical coordinates $(\theta,\phi)\in[0,\pi]\times[0,2\pi]$, where $\theta$ is the geodesic distance from $P$.

\subsection{The auxiliary vector field $V$}

We introduce now the following vector field:
\begin{equation}\label{V0}
V=\tilde V^{\perp},
\end{equation}
where
\begin{equation}\label{V1}
\tilde V=\langle K_1,\nabla u\rangle\nabla(\langle K_2,\nabla u\rangle)-\langle K_2,\nabla u\rangle\nabla(\langle K_1,\nabla u\rangle).
\end{equation}
Recall that $K_1,K_2$ are the two spherical Killing fields defined in Subsection \ref{fields} and $^{\perp}$ is defined in \eqref{perp}.

\subsection{Critical points of $u$ are non degenerate. There are no critical points on the equator.}\label{nondegenerate} 

Assume first that the critical point $p$ does not belong to the equator, that is,  $\theta\ne\frac{\pi}{2}$. Suppose that $p$ is such that $\nabla u(p)=0$ and $D^2u_{|_p}(X,\cdot)=0$ for some $X\in T_p\mathbb S^2$. Without loss of generality we may assume $X=K_1(p)$. In fact $K_1,K_2$ are linearly independent and do not vanish at any $p$ not on the equator, so we can always find a linear combination of the two that is parallel to $X$ at $p$. Moreover any combination $aK_1+bK_2$ with $a^2+b^2=1$ is just a rotation of $K_1$ (or $K_2$) around the north pole $P$.  We define then $Z=\langle K_1,\nabla u\rangle$. We have that 
 $$
 -\Delta Z=f'(u)Z
 $$ in $\Omega$ by \eqref{commuting}, since $K_1$ is a Killing field, and by construction, $Z(p)=0$ and $\nabla Z(p)=0$. 
 
Assume now that $p$ is on the equator and that $\nabla u(p)=0$. Then, without loss of generality, we can also assume that $K_1(p)=0$ (again, this can be obtained with a rotation of the coordinate system around the north pole $P$). We define $Z=\langle K_1,\nabla u\rangle$ as above, and immediately check that $-\Delta Z=f'(u)Z$, $Z(p)=0$ and $\nabla Z(p)=0$. Note that in this case we haven't supposed that the critical point is degenerate.
 
We will prove in Subsection \ref{twozeros} that under assumption \eqref{assumption} $Z$ has exactly two zeros on $\partial\Omega$ and, in particular, $Z$ does not vanish identically in $\Omega$. Therefore, in a neighborhood of $p$, the set $Z=0$ is given by (at least) two curves which intersect transversally. Since $Z$ has exactly two zeros on $\partial\Omega$, necessarily the set $Z=0$ creates a loop, i.e., there exists an open set $\omega\subset\subset\Omega$ such that $Z=0$ on $\partial\omega$ and $Z$ does not change sign in $\omega$, and solves $-\Delta Z=f'(u)Z$ in $\omega$. By domain monotonicity, the first eigenvalue of $-\Delta-f'(u)$ in $\Omega$ is negative, and this is a contradiction with the semi-stability of $u$ \eqref{semistable}.

We conclude that critical points of $u$ are non-degenerate, and in particular, they are isolated (and in finite number). Moreover, there are no critical points on the equator $\theta=\frac{\pi}{2}$.

\medskip

\subsection{The zeros of $V$ not on the equator coincide with the zeros of $\nabla u$}\label{zerosequal} In this subsection we prove that $\nabla u$ and $V$ share the same zeros in $\Omega\setminus\{\theta=\frac{\pi}{2}\}$.
Actually, we prove the equivalent statement that zeros of $\nabla u$ coincide with the zeros of $\tilde V$ not belonging to the equator. If $\nabla u(p)=0$, then by construction $\tilde V(p)=0$. Assume now that $p$ does not belong to the equator and that $\tilde V(p)=0$ but $\nabla u(p)\ne 0$.


Let
\begin{equation}\label{maxK}
K:=a K_2+b K_1
\end{equation}
where
$$
a=\langle K_1(p),\nabla u(p)\rangle\,,\ \ \ b=-\langle K_2(p),\nabla u(p)\rangle.
$$
Note that, since $\nabla u(p)\ne 0$ and $K_1,K_2$ are linearly independent and do not vanish at all $p$ not on the equator, $(a,b)\ne (0,0)$, hence $K$ is not identically zero.

Define $Z=\langle K,\nabla u\rangle$. We have that 
$$
-\Delta Z=f'(u)Z
$$
since $K$ is a Killing field, and that $Z(p)=0$ by construction. Moreover
$$
\nabla Z(p)=\tilde V(p)=0.
$$
We are led to a contradiction as in Subsection \ref{nondegenerate}.

\subsection{The vector field $V$ satisfies $\langle V,\nu\rangle>0$ on $\partial\Omega$}\label{boundaryC}
We start by noting that
$$
\langle V,\nu\rangle_{|_{\partial\Omega}}=-\frac{1}{|\nabla u|}\langle V,\nabla u\rangle=-\frac{1}{|\nabla u|}\langle V^{\perp},\nabla^{\perp} u\rangle=\frac{1}{|\nabla u|}\langle\tilde V,\nabla^{\perp} u\rangle,
$$
where we have used the fact that $(\tilde V^{\perp})^{\perp}=-\tilde V$. Here by $\nabla^{\perp}u$ we denote the vector field $(\nabla u)^{\perp}$.

Now we can write
\begin{align}
\langle\tilde V,\nabla^{\perp}u\rangle
=&\langle K_1,\nabla u\rangle DK_2(\nabla u,\nabla^{\perp}u)+\langle K_1,\nabla u\rangle D^2u(K_2,\nabla^{\perp}u)\nonumber\\
-&\langle K_2,\nabla u\rangle DK_1(\nabla u,\nabla^{\perp}u)-\langle K_2,\nabla u\rangle D^2u(K_1,\nabla^{\perp}u)\nonumber\\
=&\langle K_1,\nabla u\rangle DK_2(\nabla u,\nabla^{\perp}u)-\langle K_2,\nabla u\rangle DK_1(\nabla u,\nabla^{\perp}u)\label{red}\\
+&\langle K_1,\nabla u\rangle D^2u(K_2,\nabla^{\perp}u)-\langle K_2,\nabla u\rangle D^2u(K_1,\nabla^{\perp}u)\label{blue}.
\end{align}
where in the second inequality we have just re-arranged the terms. 

We consider \eqref{blue} first. Writing $K_i=\langle K_i,\frac{\nabla u}{|\nabla u|}\rangle\frac{\nabla u}{|\nabla u|}+\langle K_i,\frac{\nabla^{\perp}u}{|\nabla u|}\rangle\frac{\nabla^{\perp}u}{|\nabla u|}$, plugging this \eqref{blue}, and collecting $|\nabla u|^2$, \eqref{blue} simplifies to
\begin{equation}\label{blue12}
|\nabla u|^2\left(\langle K_1,\frac{\nabla u}{|\nabla u|}\rangle\langle K_2,\frac{\nabla^{\perp}u}{|\nabla u|}\rangle-\langle K_2,\frac{\nabla u}{|\nabla u|}\rangle\langle K_1,\frac{\nabla^{\perp}u}{|\nabla u|}\rangle\right)D^2u\left(\frac{\nabla^{\perp}u}{|\nabla u|},\frac{\nabla^{\perp}u}{|\nabla u|}\right).
\end{equation}
We have that 
$$
D^2u\left(\frac{\nabla^{\perp}u}{|\nabla u|},\frac{\nabla^{\perp}u}{|\nabla u|}\right)=\Delta u-\partial^2_{\nu\nu}u=-\kappa\partial_{\nu}u=\kappa|\nabla u|
$$
while
$$
\langle K_1,\frac{\nabla u}{|\nabla u|}\rangle\langle K_2,\frac{\nabla^{\perp}u}{|\nabla u|}\rangle-\langle K_2,\frac{\nabla u}{|\nabla u|}\rangle\langle K_1,\frac{\nabla^{\perp}u}{|\nabla u|}\rangle
$$
is the {\it signed} area of the parallelogram formed by $K_1,K_2$ (in this order, since $(\nabla u,\nabla^{\perp}u)$ is positively oriented). With our choice of $K_1,K_2$ we see that this sign is {\it positive} on the upper hemisphere and {\it negative} in the lower hemisphere, while this area vanishes at the equator. We have proved that \eqref{blue} can be written as
$$
|\nabla u|^3A(K_1,K_2)\kappa,
$$
 An explicit computation allows to establish that $A(K_1,K_2)=\cos(\theta)$.

Now we proceed to study \eqref{red}. To do so, we write (if $K_1\ne 0$)
\begin{multline*}
DK_2(\nabla u,\nabla^{\perp}u)\\
=DK_2\left(\langle\nabla u,\frac{K_1}{|K_1|}\rangle\frac{K_1}{|K_1|}+\langle\nabla u,\frac{K_1^{\perp}}{|K_1|}\rangle\frac{K_1^{\perp}}{|K_1|},\langle\nabla^{\perp}u,\frac{K_1}{|K_1|}\rangle\frac{K_1}{|K_1|}+\langle\nabla^{\perp}u,\frac{K_1^{\perp}}{|K_1|}\rangle\frac{K_1^{\perp}}{|K_1|}\right)\\
=\langle\nabla u,\frac{K_1}{|K_1|}\rangle\langle\nabla^{\perp}u,\frac{K_1^{\perp}}{|K_1|}\rangle DK_2\left(\frac{K_1}{|K_1|},\frac{K_1^{\perp}}{|K_1|}\right)\\
+\langle\nabla u,\frac{K_1^{\perp}}{|K_1|}\rangle\langle\nabla^{\perp}u,\frac{K_1}{|K_1|}\rangle DK_2\left(\frac{K_1^{\perp}}{|K_1|},\frac{K_1}{|K_1|}\right)\\
=\left(\left|\langle\nabla u,\frac{K_1}{|K_1|}\rangle\right|^2+\left|\langle\nabla u,\frac{K_1^{\perp}}{|K_1|}\rangle\right|^2\right)DK_2\left(\frac{K_1}{|K_1|},\frac{K_1^{\perp}}{|K_1|}\right)\\
=|\nabla u|^2DK_2\left(\frac{K_1}{|K_1|},\frac{K_1^{\perp}}{|K_1|}\right).
\end{multline*}
Here we have used the fact that $DK_2$ is anti-symmetric and that $\langle U^{\perp},V\rangle=-\langle U,V^{\perp}\rangle$ and $U^{\perp\perp}=-U$ for tangent vectors $U,V$. Analogously we get (if $K_2\ne 0$)
$$
DK_1(\nabla u,\nabla^{\perp}u)=|\nabla u|^2 DK_1\left(\frac{K_2}{|K_2|},\frac{K_2^{\perp}}{|K_2|}\right)
$$
Hence \eqref{red} is rewritten as
$$
|\nabla u|^2\langle F,\nabla u\rangle
$$
where
$$
F=DK_2\left(\frac{K_1}{|K_1|},\frac{K_1^{\perp}}{|K_1|}\right)K_1-DK_1\left(\frac{K_2}{|K_2|},\frac{K_2^{\perp}}{|K_2|}\right)K_2,
$$
which is a combination of the two Killing fields $K_1,K_2$. Note that this is also true at the points where $K_1$ and $K_2$ vanish (they do not vanish at the same points).

Now, we need to use the explicit expressions of $K_1,K_2$. Recall that $K_1=z\partial_x-x\partial_z$ and $K_2=z\partial_y-y\partial_z$. We compute that $|K_1|^2=x^2+z^2$ and $|K_2|^2=y^2+z^2$ and that
$$
DK_2(K_1,\cdot)=-x\partial_y\,,\ \ \ DK_1(K_2,\cdot)=-y\partial_x.
$$
We also check that $K_1^{\perp}=-xy\partial_x+(x^2+z^2)\partial_y-yz\partial_z$ so that
$$
DK_2\left(\frac{K_1}{|K_1|},\frac{K_1^{\perp}}{|K_1|}\right)=-x.
$$
Analogously, since $K_2^{\perp}=(-y^2-z^2)\partial_x+xy\partial_y+xz\partial_z$, we see that
$$
DK_1\left(\frac{K_2}{|K_2|},\frac{K_2^{\perp}}{|K_2|}\right)=y.
$$
Therefore
$$
F=-xK_1-yK_2=-xz\partial_x-yz\partial_y+(x^2+y^2)\partial_z.
$$
However this is not the most convenient way to look at $F$. In spherical coordinates $(\theta,\phi)\in[0,\pi]\times[0,2\pi]$ we have $K_1=\cos(\phi)\partial_{\theta}-\cot(\theta)\sin(\phi)\partial_{\phi}$ and $K_2=\sin(\phi)\partial_{\theta}+\cot(\theta)\cos(\phi)\partial_{\phi}$. Since $x=\sin(\theta)\cos(\phi)$ and $y=\sin(\theta)\sin(\phi)$ we have
$$
F=-\sin(\theta)\partial_{\theta}.
$$
We conclude that \eqref{red} can be re-written as
$$
-|\nabla u|^2\sin(\theta)\langle\nabla u,\vec{e_ {\theta}}\rangle=|\nabla u|^3\langle\nu,\vec{e_ {\theta}}\rangle.
$$

We have proved that
\begin{equation}\label{expression}
\langle V,\nu\rangle=|\nabla u|^2\left( \cos(\theta)\kappa+\sin(\theta)\langle\nu,\vec{e_{\theta}}\rangle\right)
\end{equation}
on $\partial\Omega$, which is strictly positive by the assumption \eqref{assumption}.

\subsection{The index of any zero of $V$ is $1$}\label{indexS} 

There are two families of zeros of $V$: the zeros of $V$ in $\Omega\setminus\{\theta=\frac{\pi}{2}\}$, which coincide with the zeros of $\nabla u$, by Subsections \ref{nondegenerate} and \ref{zerosequal}; the zeros of $V$ in $\Omega\cap \{\theta=\frac{\pi}{2}\}$. 

Now, we observe that the zeros $p$ of $V$ in $\Omega\cap \{\theta=\frac{\pi}{2}\}$ are characterized by $\langle\nabla u(p),\vec{e_{\theta}}\rangle=0$, which implies that $\langle K_i(p),\nabla u(p)\rangle=0$. In fact, if $p$ on the equator is such that $\langle\nabla u(p),\vec{e_{\theta}}\rangle\ne 0$,  as in Subsection \ref{zerosequal} we can find a linear combination $K$ of $K_1,K_2$ such that $\langle K(p),\nabla u(p)\rangle=0$ (one checks that $K(p)=0$) and define $Z=\langle K,\nabla u\rangle$. It follows that $Z(p)=0$, and if we assume that $V(p)=0$, we have also $\nabla Z(p)=0$; since $-\Delta Z-f'(u)Z=0$ in $\Omega$, we reach a contradiction as in Subsection \ref{zerosequal}.

We have to compute the index of the zeros in any of such families.

Let $p$ be a zero of $\nabla u$, which means that it is a zero of $V$ in $\Omega\setminus\{\theta=\frac{\pi}{2}\}$. Let $(\vec{e_1},\vec{e_2})$ be an orthonormal basis of $T_p\mathbb S^2$. We have that
\begin{multline*}
\nabla_{\vec{e_1}}V(\vec{e_1})\\
=\langle\nabla(\langle\nabla u,K_1\rangle,\vec{e_1}\rangle)\langle\nabla^{\perp}(\langle\nabla u,K_2\rangle),\vec{e_1}\rangle-\langle\nabla(\langle\nabla u,K_2\rangle,\vec{e_1}\rangle)\langle\nabla^{\perp}(\langle\nabla u,K_1\rangle),\vec{e_1}\rangle\\
=-\langle\nabla(\langle\nabla u,K_1\rangle,\vec{e_1}\rangle)\langle\nabla(\langle\nabla u,K_2\rangle),\vec{e_2}\rangle+\langle\nabla(\langle\nabla u,K_2\rangle,\vec{e_1}\rangle)\langle\nabla(\langle\nabla u,K_1\rangle),\vec{e_2}\rangle\\
=D^2u(K_2,\vec{e_1})D^2u(K_1,\vec{e_2})-D^2u(K_1,\vec{e_1})D^2u(K_2,\vec{e_2}).
\end{multline*}
In the same way we see that
$$
\nabla_{\vec{e_2}}V(\vec{e_2})=D^2u(K_2,\vec{e_1})D^2u(K_1,\vec{e_2})-D^2u(K_1,\vec{e_1})D^2u(K_2,\vec{e_2})
$$
and 
$$
\nabla_{\vec{e_2}}V(\vec{e_1})=\nabla_{\vec{e_1}}V(\vec{e_2})=0.
$$
Moreover, writing $K_i=\langle K_i,\vec{e_1}\rangle \vec{e_1}+\langle K_i,\vec{e_2}\rangle \vec{e_2}$ in the expression for $\nabla_{\vec{e_1}}V(\vec{e_1})$ (and $\nabla_{\vec{e_2}}V(\vec{e_2})$ which is the same), we get
\begin{multline*}
\nabla_{\vec{e_1}}V(\vec{e_1})=\nabla_{\vec{e_2}}V(\vec{e_2})\\=-\left(\langle K_1,\vec{e_1}\rangle\langle K_2,\vec{e_2}\rangle-\langle K_2,\vec{e_1}\rangle\langle K_1,\vec{e_2}\rangle\right)\left(D^2u(\vec{e_1},\vec{e_1})D^2u(\vec{e_2},\vec{e_2})-D^2u(\vec{e_1},\vec{e_2})^2\right)\\
=-A(K_1,K_2){\rm det}D^2u=-\cos(\theta){\rm det}D^2u.
\end{multline*}
Since $p$ does not belong to the equator $\theta=\frac{\pi}{2}$, $\cos(\theta)\ne 0$. Hence $p$ is a non-degenerate zero of $V$ of index $1$.

We investigate now the second family of zeros of $V$. Let $p\in\Omega\cap\{\theta=\frac{\pi}{2}\}$ be such that $\langle \nabla u(p),\vec{e_{\theta}}\rangle=0$, which implies $V(p)=0$. As remarked at the beginning of this section, these are the only zeros of $V$ on the equator. We consider at $p$ the following orthonormal basis of $T_p\mathbb S^2$: $(\vec{e_{\theta}},\vec{e_{\phi}})$. Extrinsically, $\vec{e_{\theta}}=(0,0,-1)$, $\vec{e_{\phi}}=(-y_p,x_p,0)$, where $p=(x_p,y_p,0)$. This basis is positively oriented by our choice of orientation of $\mathbb S^2$.

We repeat the same computation of the previous paragraph, using the fact that $\nabla u=\pm|\nabla u|\vec{e_{\phi}}$ at $p$, and the anti-symmetry of $DK_i$. We find that
\begin{itemize}
\item $\nabla_{\vec{e_{\theta}}}V(\vec{e_{\phi}})=\nabla_{\vec{e_{\phi}}}V(\vec{e_{\theta}})=0$;
\item $\nabla_{\vec{e_{\theta}}}V(\vec{e_{\theta}})=\nabla_{\vec{e_{\phi}}}V(\vec{e_{\phi}})=\mp|\nabla u(p)|D^2u(p)(\vec{e_{\theta}},\vec{e_{\phi}})$.
\end{itemize}
Now, since $|\nabla u(p)|\ne 0$, we have that if $D^2u(p)(\vec{e_{\theta}},\vec{e_{\phi}})\ne 0$ the index of $V$ at $p$ is $1$.

We show now that necessarily $D^2u(p)(\vec{e_{\theta}},\vec{e_{\phi}})\ne 0$, and this will be done by contradiction, using the same argument of Subsections \ref{nondegenerate} and \ref{zerosequal}. Assume that $D^2u(p)(\vec{e_{\theta}},\vec{e_{\phi}})= 0$. This means that $(\vec{e_{\theta}},\vec{e_{\phi}})$ is a orthonormal basis of $T_p\mathbb S^2$ of eigenvectors of $D^2u(p)$. In particular there exists $\lambda\in\mathbb R$ such that $D^2u(p)(\vec{e_{\theta}})=\lambda\vec{e_{\theta}}$. We have two possibilities: $\lambda=0$ and $\lambda\ne 0$.
\begin{itemize}
\item Assume that $\lambda=0$. Since $V$ is no longer involved, it is easier to rotate the system of coordinates and assume that $p=(0,1,0)$. Take then $Z=\langle K_2,\nabla u\rangle$. In particular, at $p$, $K_2(p)=\vec{e_{\theta}}$. By construction, $Z(p)=0$ and $\nabla Z(p)=D^2u(p)(K_2)+DK_2(\nabla u)=D^2u(p)(\vec{e_{\theta}})+DK_2(\nabla u)=0$. The fact that $DK_2(\nabla u)=0$ follows by explicit computations: at $p$, $\nabla u=(\pm|\nabla u|,0,0)$, and  $DK_2(X)=(0,X_3,-X_2)$ for a vector $X=(X_1,X_2,X_3)$.
\item Assume that $\lambda\ne 0$. We claim that it is possible to find a linear combination $K=aK_1+bK_2$ such that $Z=\langle K,\nabla u\rangle$ satisfies $Z(p)=0$, $\nabla Z(p)=0$ and $-\Delta Z-f'(u)Z=0$ in $\Omega$. The first and third properties are satisfied for any choice of $a,b$. We can assume that $p=(0,1,0)$, so that $\nabla u=\pm|\nabla u|\vec{e_{\phi}}$. Now, we write
\begin{multline*}
\nabla Z=D^2u(p)(aK_1(p)+bK_2(p))+a DK_1(p)(\nabla u(p))+b DK_2(p)(\nabla u(p))\\
=\pm|\nabla u|a-\lambda b,
\end{multline*}
where we have used the fact that at $p$, $K_1=0$ and $K_2=\vec{e_{\theta}}$. It is now sufficient to choose, e.g., $a=\lambda$, $b=\pm|\nabla u|$. The rest of the argument is the same as in Subsections \ref{nondegenerate} and \ref{zerosequal}.
\end{itemize}

\subsection{Application of Poincaré-Hopf and conclusion.} We have proved that $V$ and $\nabla u$ have the same zeros outside the equator, and that they are isolated since they are non-degenerate. Also the zeros of $V$ on the equator are non-degenerete, hence isolated. We have also proved that ${\rm Ind}_pV=1$ for any $p$ such that $V(p)=0$. Moreover $\langle V,\nu\rangle>0$ on $\partial\Omega$. We are in position to apply Poincaré-Hopf Theorem and conclude that
$$
1=\chi(\Omega)=\sum_{p:V(p)=0}{\rm Ind}_pV=\sum_{p:V(p)=0}1\geq \sum_{p:\nabla u(p)=0}1=\#\{{\rm critical\ points\ of\ }u\}\geq 1,
$$
therefore $V$ has a unique, non degenerate zero. It cannot be on the equator, since this would not be a zero of $\nabla u$, which must vanish at least once in $\Omega$. Therefore the unique zero of $V$ must coincide with a zero of $\nabla u$, which is therefore unique and non-degenerate (a maximum).

\begin{rem}
We have proved that, a posteriori, there are no points on $\Omega\cap\{\theta=\frac{\pi}{2}\}$ where $\langle\nabla u,\vec{e_{\theta}}\rangle=0$. 
\end{rem}

\subsection{The function $\langle K_1,\nabla u\rangle$ has exactly two zeros on $\partial\Omega$}\label{twozeros}We are only left to prove that condition \eqref{assumption} implies that $\langle K_1,\nabla u\rangle$ has exactly two zeros on $\partial\Omega$. We consider coordinates $(\theta,\phi)$ centered at the north pole $P$. Here $\theta$ is the geodesic distance from $P$.

We consider a parametrization $\gamma:[0,|\partial\Omega|]\to\mathbb S^2\subset\mathbb R^3$ of $\partial\Omega$
$$
\gamma(t):=(\sin(\theta(t))\cos(\phi(t)),\sin(\theta(t))\sin(\phi(t)),\cos(\theta(t))),
$$
 where $t$ is the arc-length parameter. Since $\partial\Omega$ is uniformly star-shaped  we have that
$$
\phi:[0,|\partial\Omega|]\to[0,2\pi]
$$
is a strictly increasing bijection: $\phi'(t)>0$ for all $t\in[0,|\partial\Omega|]$.

Since $u=0$ on $\partial\Omega$, we can write, with abuse of notation, that $u(\theta(t),\phi(t))=0$ for all $t\in[0,|\partial\Omega|]$. Hence $\theta'(t)\partial_{\theta}u+\phi'(t)\partial_{\phi}u=0$. We write explicitly $\langle K_1,\nabla u\rangle=\cos(\phi)\partial_{\theta}u-\cot(\theta)\sin(\phi)\partial_{\phi}u$.
Then, any $t\in[0,|\partial\Omega|]$ for which $\langle K_1,\nabla u\rangle|_{\partial\Omega}=0$ at $t$ is such that
$$
\begin{cases}
\theta'(t)\partial_{\theta}u+\phi'(t)\partial_{\phi}u=0\\
\cos(\phi(t))\partial_{\theta}u-\cot(\theta(t))\sin(\phi(t))\partial_{\phi}u=0.
\end{cases}
$$
Since $\nabla u\ne 0$ on $\partial\Omega$ by assumption, in correspondence of a solution  $t\in[0,|\partial\Omega|]$ of the above system, the determinant of the associated matrix must vanish. In other words, for such $t$ we must have
$$
F(t):=\theta'(t)\cot(\theta(t))\sin(\phi(t))+\phi'(t)\cos(\phi(t))=0.
$$
To simplify the notation, we shall omit from now on the dependence in $t$. We prove now that $F$ 
has exactly two zeros in $[0,|\partial\Omega|]$. This implies that the system cannot have more than two solutions. Actually, in view of the geometric interpretation of the problem, this implies that there are exactly two solutions, which correspond to the points where integral curves of $K_1$ are tangent to $\partial\Omega$.

We note that
$$
F(0)=\phi'(0)>0\,,\ \ \ F(\phi^{-1}(\pi))=-\phi'(\phi^{-1}(\pi))<0.
$$
Therefore $F$ is a smooth periodic function which admits at least two zeros. Let $t$ be such that $F(t)=0$. We compute
\begin{multline*}
\phi'(t)F'(t)\\=-\sin(\phi)\phi'^3-\frac{\sin(\phi)\phi'\theta'^2}{\sin^2(\theta)}+\cot(\theta)\cos(\phi)\phi'^2\theta'+\cot(\theta)\sin(\phi)\phi'\theta''+\cos(\phi)\phi'\phi''.
\end{multline*}
Now, for a $t$ such that $F(t)=0$ we have that $\phi'\cos(\phi)=-\cot(\theta)\sin(\phi)\theta'$. Substituting this in the previous formula, when $F(t)=0$, we have
$$
\phi'(t)F'(t)=-\sin(\phi)\left(\phi'\left(\phi'^2+\frac{1+\cos^2(\theta)}{\sin^2(\theta)}\theta'^2\right)+\cot(\theta)(\theta'\phi''-\phi'\theta'')\right)
$$
Assume that
\begin{equation}\label{der_sign}
\phi'\left(\phi'^2+\frac{1+\cos^2(\theta)}{\sin^2(\theta)}\theta'^2\right)+\cot(\theta)(\theta'\phi''-\phi'\theta'')\ne 0
\end{equation}
for all $t$ (in other words, it has constant sign and never vanishes). Since for $t=0$ and for $t=\phi^{-1}(\pi)$ $F$ does not vanish, we deduce that $F'(t)<0$ in correspondence of a zero of $F$ in $[0,\phi^{-1}(\pi)]$, while $F'(t)>0$ in correspondence of a zero of $F$ in $[\phi^{-1}(\pi),|\partial\Omega|]$. We conclude that $F$ has exactly one zero in $[0,\phi^{-1}(\pi)]$ and exactly one zero in $[\phi^{-1}(\pi),|\partial\Omega|]$.

\begin{rem} We remark that the argument is invariant under rotations of the domain around $P$, hence the statement remains valid if we replace $K_1$ by $K_2$, or by any other linear combination of $K_1,K_2$.
\end{rem}

We prove now that \eqref{assumption} implies \eqref{der_sign}.

We consider the above arc-lenght parametrization $\gamma$ of $\partial\Omega$. We have
\begin{multline*}
\gamma'(t)=(\cos(\theta(t))\cos(\phi(t))\theta'(t)-\sin(\theta(t))\sin(\phi(t))\phi'(t),\\
 \cos(\theta(t))\sin(\phi(t))\theta'(t)+\sin(\theta(t))\cos(\phi(t))\phi'(t)\\
 ,-\sin(\theta(t))\theta'(t))
\end{multline*}
and
$$
|\gamma'(t)|^2=\theta'(t)^2+\sin^2(\theta(t))\phi'(t)^2=1
$$ 

We fix an orientation for $\gamma$, in such a way that $(\gamma'(t),N(t),\nu(t))$ is positively oriented with respect to the canonical basis $(\vec{e_1},\vec{e_2},\vec{e_3})$ of $\mathbb R^3$. Here $N(t)=\gamma(t)$ is the normal vector to $\mathbb S^2$ at $\gamma(t)$ and $\nu(t)$ is the unit outer normal to  $\partial\Omega$.

With this choice we have
\begin{multline}\label{nu}
\nu(t)=\gamma'(t)\times N(t)=(\sin(\phi(t)\theta'(t)+\sin(\theta(t))\cos(\theta(t))\cos(\phi(t))\phi'(t),\\
-\cos(\phi(t)\theta'(t)+\sin(\theta(t))\cos(\theta(t))\sin(\phi(t))\phi'(t),\\
-\sin^2(\theta(t))\phi'(t)).
\end{multline}

Then, the geodesic curvature $\kappa$ of $\partial\Omega$ with respect to $\nu$ is given by
\begin{equation}\label{f1}
\kappa=-\langle\gamma''(t),\nu(t)\rangle,
\end{equation}
where the scalar product is the standard product of $\mathbb R^3$ (which coincides with the scalar product on $\mathbb S^2$ if restricted to tangent vectors).

We also have
\begin{equation}\label{f2}
\vec{e_{\theta}}=(\cos(\theta(t))\cos(\phi(t)),\cos(\theta(t))\sin(\phi(t)),-\sin(\theta(t))).
\end{equation}
Using \eqref{nu}, \eqref{f1} and \eqref{f2} we find that

\begin{multline}\label{p0}
\cos(\theta)\kappa+\sin(\theta)\langle\nu,\vec{e_{\theta}}\rangle\\
=\sin^2(\theta(t))\phi'(t)+2\cos^2(\theta(t))\theta'(t)^2\phi'(t)+\sin^2(\theta(t))\cos^2(\theta(t))\phi'(t)^3\\
+\sin(\theta(t))\cos(\theta(t))\left(\theta'(t)\phi''(t)-\phi'(t)\theta''(t)\right).
\end{multline}
We use now that $1=\theta'(t)^2+\sin^2(\theta(t))\phi'(t)^2$ to re-write the first term on the right-hand side of \eqref{p0} as $(\theta'(t)^2+\sin^2(\theta(t))\phi'(t)^2)\sin^2(\theta(t))\phi'(t)$. We use this identity in \eqref{p0} to finally obtain
\begin{multline}\label{p1}
\cos(\theta)\kappa+\sin(\theta)\langle \nu,\vec{e_{\theta}}\rangle\\
=\sin^2(\theta(t))\left(\phi'(t)\left(\phi'(t)^2+\frac{1+\cos^2(\theta(t))}{\sin^2(\theta(t))}\right)+\cot(\theta(t))\left(\theta'(t)\phi''(t)-\phi'(t)\theta''(t)\right)\right).
\end{multline}
Therefore from \eqref{assumption} it follows that \eqref{der_sign} is always strictly positive and never vanishes, and we conclude that $\langle K_1,\nabla u\rangle$ has exactly two zeros on $\partial\Omega$.

\section{Proof of Theorem \ref{main2} (uniqueness of the critical point)}\label{proof2}
We will prove here Theorem \ref{main2}. Since the proof is analogous to that of Theorem \ref{main} we shall only highlight the main computational differences. We remark that in case of $\mathbb H^2$ we do not have the additional difficulty that the two {\it hyperbolic} Killing fields $K_1,K_2$ defined in Subsection \ref{fields} are linearly dependent or vanish at some points. This renders the proof significantly simpler.

Without loss of generality we can assume that $\Omega$ is star-shaped with respect to the origin in the Poincaré disk model. As for the spherical case, we have that the unit outer normal to $\partial\Omega$ can be written as $\nu=-\frac{\nabla u}{|\nabla u|}$, since $|\nabla u|\ne 0$ on $\partial\Omega$. Sometimes it will be  convenient to describe $\mathbb H^2$ in polar coordinates $(r,\phi)\in[0,+\infty)\times[0,2\pi]$ based at the origin, where $r$ is the geodesic distance from the origin. In these coordinates the metric takes the form $dr^2+\sinh^2(r)d\phi^2$.

\subsection{The auxiliary vector field $V$} The vector field $V$ used in the proof of Theorem \ref{main2} has the same form of \eqref{V0}, namely
$$
V=\tilde V^{\perp}
$$
with
$$
\tilde V=\langle K_1,\nabla u\rangle\nabla(\langle K_2,\nabla u\rangle)-\langle K_2,\nabla u\rangle\nabla(\langle K_1,\nabla u\rangle)
$$
where now $K_1,K_2$ are the Killing fields in the hyperbolic space defined in Subsection \ref{fields}.

\subsection{Critical points of $u$ are non degenerate} The proof that the critical points of $u$ are non degenerate is identical to the corresponding one in Subsection \ref{nondegenerate} for the case in which $p$ does not belong to the equator.

 A fundamental ingredient in the proof is the fact that given $K$, an arbitrary combination of $K_1,K_2$, then $Z:=\langle K,\nabla u\rangle$ has exactly two zeros on $\partial\Omega$ if condition \eqref{assumption2} holds. We will prove this fact in Subsection \ref{twozeros2}.
\subsection{The zeros of $V$ coincide with the zeros of $\nabla u$} Since at any point of $\mathbb H^2$ the fields $K_1,K_2$ are linearly independent, the same argument of Subsection \ref{zerosequal} applies. We deduce that the critical points of $u$ coincide with the zeros of $V$.
\subsection{The vector field $V$ satisfies $\langle V,\nu\rangle>0$ on $\partial\Omega$} 

As in Subsection \ref{boundaryC}, we see that
$$
\langle V,\nu\rangle_{|_{\partial\Omega}}=\frac{1}{|\nabla u|}\langle\tilde V,\nabla^{\perp}u\rangle
$$
and that
\begin{align}
\langle\tilde V,\nabla^{\perp}u\rangle\nonumber\\
=&\langle K_1,\nabla u\rangle DK_2(\nabla u,\nabla^{\perp}u)-\langle K_2,\nabla u\rangle DK_1(\nabla u,\nabla^{\perp}u)\label{red2}\\
+&\langle K_1,\nabla u\rangle D^2u(K_2,\nabla^{\perp}u)-\langle K_2,\nabla u\rangle D^2u(K_1,\nabla^{\perp}u)\label{blue2}.
\end{align}
We consider first \eqref{blue2} and note that the same computations of Subsection \ref{boundaryC} apply. Then  \eqref{blue2} can be written as
$$
|\nabla u|^3A(K_1,K_2)\kappa
$$
where $\kappa$ is the geodesic curvature of $\partial\Omega$ and $A(K_1,K_2)$ is the signed area of the parallelogram of sides $K_1,K_2$, which is positive since the basis $(K_1,K_2)$ is positively oriented at any point of $\mathbb H^2$. An explicit computation shows that
$$
A(K_1,K_2)=\frac{1+x^2+y^2}{1-x^2-y^2}.
$$
Recalling that in polar coordinates based at the origin $x^2+y^2=\tanh(r/2)$, we also have
$$
A(K_1,K_2)=\frac{1+x^2+y^2}{1-x^2-y^2}=\cosh(r).
$$
On the other hand, following Subsection \ref{boundaryC}, we rewrite \eqref{red2} as
$$
|\nabla u|^2\langle F,\nabla u\rangle,
$$
where
$$
F=DK_2\left(\frac{K_1}{|K_1|},\frac{K_1^{\perp}}{|K_1|}\right)K_1-DK_1\left(\frac{K_2}{|K_2|},\frac{K_2^{\perp}}{|K_2|}\right)K_2,
$$
which is a combination of the two Killing fields $K_1,K_2$. First we observe that, due to the ant-symmetry of $DK_i$, we have, for any positively oriented orthonormal basis $(\vec{e_1},\vec{e_2})$ of $T_p\mathbb H^2$, that (at $p$) $DK_i\left(\frac{K_j}{|K_j|},\frac{K_j^{\perp}}{|K_j|}\right)=DK_i(\vec{e_1},\vec{e_2})$, for any $i,j\in\{1,2\}$. Then, thanks to explicit computations of $DK_i$ using the Christoffel  symbols for the Poincaré disk model, we get that $F=x\partial_x+y\partial_y$. Therefore, recalling that $x\partial_x+y\partial y$ has norm $\frac{2\sqrt{x^2+y^2}}{1-x^2-y^2}$ and that in polar coordinates $x^2+y^2=\tanh(r/2)$ we get that \eqref{red2} can be written as
$$
-|\nabla u|^3\sinh(r)\langle\nu,\vec{e_r}\rangle.
$$
We conclude that under the condition \eqref{assumption2} we have that $\langle V,\nu\rangle_{|_{\partial\Omega}}=|\nabla u|^2(\cosh(r)\kappa-\sinh(r)\langle\nu,\vec{e_r}\rangle)>0$.

\subsection{The index of any zero of $V$ is $1$}\label{indexS2} The computations are exactly the same of the corresponding case of Subsection \ref{indexS} when $p$ does not belong to the equator.

\subsection{Application of Poincaré-Hopf and conclusion} We have proved that $V$ and $\nabla u$ have the same zeros, which are isolated since they are non-degenerate. Moreover any such zero has index $1$ for $V$. Therefore we can apply Poincaré-Hopf Theorem and conclude:
$$
1=\chi(\Omega)=\sum_{p:V(p)=0}{\rm Ind}_pV=\sum_{p:V(p)=0}1=\#\{{\rm critical\ points\ of\ u}\}.
$$
Hence $u$ has a unique, non-degenerate critical point (a maximum).

\subsection{The function $\langle K_1,\nabla u\rangle$ has exactly two zeros on $\partial\Omega$}\label{twozeros2} We prove here that assumption \eqref{assumption2} implies that $\langle K_1,\nabla u\rangle$ has exactly two zeros on $\partial\Omega$. The proof is very similar to that presented in Subsection \ref{twozeros}, however we will give a few details. 

First, we note that being the metric conformal to the Euclidean one, it is sufficient to prove the statement when we consider the Euclidean metric on the unit disk $D$ instead of the metric $\frac{4}{(1-x^2-y^2)^2}(dx^2+dy^2)$. Recall that we assumed $\Omega$ to be uniformly star-shaped with respect to the origin.

We consider then an arc-length parametrization $\gamma:[0,|\partial\Omega|]\to D$ of $\partial\Omega$, $\gamma(t)=(x(t),y(t))\in D$. The outward unit normal to $\partial\Omega$ is given by $\nu_E=(y'(t),-x'(t))$. We use the subscript $\nu_E$ since this is the unit outer normal with respect to the Euclidean metric. We also denote by $\nabla_Eu$ the Euclidean gradient and by $\langle\cdot,\cdot\rangle_E$ the Euclidean scalar product. Let now $t\in[0,|\partial\Omega|]$ be such that $\langle K_1,\nabla_Eu\rangle_E$=0. Then
$$
\begin{cases}
x'(t)\partial_xu+y'(t)\partial_yu=0\\
\frac{1-x^2(t)+y^2(t)}{2}\partial_xu-x(t)y(t)\partial_yu=0.
\end{cases}
$$
Since $\nabla u\ne 0$ on $\partial\Omega$, in correspondence of a solution $t\in[0,|\partial\Omega|]$ of the system, the associated determinant must vanish, i.e., for such $t$ we must have
$$
F(t):=x'(t)x(t)y(t)+y'(t)\left(\frac{1-x^2(t)+y^2(t)}{2}\right)=0.
$$
From now on we will drop the dependence on $t$ in order to simplify the notation. We prove that $F$ has exactly two zeros in $[0,|\partial\Omega|]$. This implies that the system has no more than two solutions, which must be exactly two and correspond to the points where integral curves of $K_1$ are tangent to $\partial\Omega$.

First, we note that since $\Omega$ is uniformly star-shaped, we have $xy'-x'y\geq\delta\geq 0$ for some positive constant $\delta$. We can assume that $y(0)=0$ and $x(0)>0$; moreover, there exists a unique $t^*$ such that $y(t^*)=0$ , $x(t^*)<0$ and $y>0$ on $(0,t^*)$. Therefore $y'(0)>0$ and $y'(t^*)<0$, which in turn implies that $F(0)>0$ and $F(t^*)<0$. 
We prove now that $F$ has a unique zero in $(0,t^*)$.  To do so, we compute $F'$:
$$
F'=y+xyx''+y''\left(\frac{1-x^2+y^2}{2}\right).
$$
Let now $t\in(0,t^*)$ be a point where $F(t)=0$. Therefore $x'(t)\ne 0$ otherwise we would have $y'(t)=0$ which is not possible. Moreover, at such $t$, we must have $x'(t)<0$; if this were not true, from the uniform star-shapedness we would have $x(t)y'(t)>0$, but then $F(t)\ne 0$. Let us multiply $F'$ by $x'$ and use the fact that $x'xy=-y'\left(\frac{1-x^2+y^2}{2}\right)$. At such $t$ we have
\begin{multline}\label{F1}
x'(t)F'(t)\\=x'y+x'xyx''+x'y''\left(\frac{1-x^2+y^2}{2}\right)
=\left(\frac{1-x^2+y^2}{2}\right)(x'y''-y'x'')+x'y\\
=\frac{1-x^2+y^2}{1+x^2+y^2}\left(\frac{1+x^2+y^2}{2}(x'y''-y'x'')+\left(\frac{1+x^2+y^2}{1-x^2+y^2}\right)x'y\right)\\
=\frac{1-x^2+y^2}{1+x^2+y^2}\left(\frac{1+x^2+y^2}{2}(x'y''-y'x'')-xy'+x'y\right)\\
=\frac{1-x^2+y^2}{1+x^2+y^2}\left(\frac{1+x^2+y^2}{2}\kappa_E-\langle\nu_E,(x,y)\rangle_E\right).
\end{multline}
Here we have denoted by $\kappa_E$ the geodesic curvature of $\partial\Omega$ with respect to the Euclidean metric. In order to pass from the second to the third line we have used the condition $F(t)=x'xy+y'\left(\frac{1-x^2+y^2}{2}\right)=0$. If the last term in \eqref{F1} never vanishes in $(0,t^*)$ and is positive we are done, since this implies that $F$ has a unique zero in $(0,t^*)$, as in Subsection \ref{twozeros}. In the same way one can prove that $F$ has exactly one zero in $(t^*,|\partial\Omega|)$.

In order to conclude, we must prove that condition \eqref{assumption2} implies that the last term of $\eqref{F1}$ is positive an never vanishes on $(0,t^*)$. To do so, we recall that the geodesic curvature $\kappa$ of $\partial\Omega$ and the curvature $\kappa_E$ of $\partial\Omega$ for the Euclidean metric are related by
$$
\kappa=\frac{1-x^2-y^2}{2}\kappa_E+\langle\nu_E,(x,y)\rangle_E.
$$
Hence, we can write
\begin{multline}\label{equiv2}
\cosh(r)\kappa+\sinh(r)\langle\nu,\vec{e_r}\rangle=\frac{1+x^2+y^2}{1-x^2-y^2}\kappa+\frac{2}{1-x^2-y^2}\langle\nu_E,(x,y)\rangle_E\\=\frac{1+x^2+y^2}{2}\kappa_E-\langle\nu_E,(x,y)\rangle_E.
\end{multline}
The proof of the claim is now completed.

\section{ Optimality of the conditions \eqref{assumption} and  \eqref{assumption2} in Theorems \ref{main} and \ref{main2}}\label{s10}
In this section, we focus on the case of the torsion problem
\begin{equation}\label{T}
\begin{cases}
-\Delta u = 1 &
\mbox{ on } \Omega \\
u = 0 & \mbox{ on } \partial \Omega
\end{cases}
\end{equation}
Our goal is to construct examples of solutions to \eqref{T} in certain geodesically star-shaped domains of $M = \mathbb{S}^2$ or $\mathbb{H}^2$ that possess as many critical points as desired, even when the domain $\Omega$, in a suitable sense,  almost satisfies \eqref{assumption} or \eqref{assumption2}. 

The examples we construct demonstrate that the conditions \eqref{assumption} and \eqref{assumption2} in Theorems \ref{main} and \ref{main2}, which guarantee the uniqueness of the critical point of $u$, are in fact optimal.\

This type of example first appeared in \cite{GG22} and \cite{DRG1}, where the authors showed the optimality of the convexity assumption on $\Omega$ in the result of \cite{CC} when $\Omega \subset \mathbb{R}^2$. The example in \cite{GG22} was later generalized in \cite{EGG25} to the case of a general $d$-dimensional Riemannian manifold $M$, as well as to problems involving general nonlinearities $f(u)$. 

The resulting example is a smooth domain which collapses to a point but does not fully inherit the intrinsic features of the manifold $M$.

Here, we aim to produce more constructive examples of solutions to \eqref{T}, which retain all the desirable properties of those in \cite{GG22} and \cite{EGG25}, by working directly in $\mathbb{S}^2$ and $\mathbb{H}^2$ and which do not shrink to a point. Since all these examples are, in some sense, pathological, we will observe that the underlying domain converges to a suitable geodesic segment. This kind of pathology in the domain is necessary, as shown in \cite{BDM} for the Euclidean case: a small perturbation of a convex domain does not necessarily produces additional critical points.

Through all this section we will consider coordinates $(x,y)\in\mathbb R^2$ for both $\mathbb S^2$ and $\mathbb H^2$: for $\mathbb S^2$, $(x,y)\in\mathbb R^2$ are the stereographic coordinates obtained through the stereographic projection from the south pole, while for $\mathbb H^2$, $(x,y)\in D\subset\mathbb R^2$, where $D$ is the unit disk, are the coordinates in the Poincaré disk model.
\begin{thm}\label{T1}
For any integer $n\geq 2$ there exists a family of smooth bounded domains $\Omega_b$ either in $\mathbb S^2$ or in $\mathbb H^2$
 and smooth functions $u_b$ which solve \eqref{T} in $\Omega_{b}$ such that, for all $b$ small enough:
\begin{enumerate}[a)]
\item the solution $u_b$ has at least $n$ maximum points in $\Omega_{b}$;
\item we have $\overline\Omega_{b}\to [-\bar x_{\eta},\bar x_{\eta}]\times\{0\}$ as $b\to 0$ (an arc of geodesic), where $\bar x_{\eta}\in (0,1)$;
\item $\Omega_{b}$ is geodesically starshaped with respect to the origin (the projection of the norh pole in the $\mathbb S^2$ case); 
\item for $b$ small enough we have
\begin{equation}\label{cla-f}
\begin{split}
\partial\Omega_{b}& =\left\{(x,y):-\bar x_{\eta}\le x\le\bar x_{\eta}\,, y=\pm\sqrt{\frac{b}{2}(1-\eta f(x))}(1+x^2)(1+o(1))\right\}\subset \mathbb S^2,\\
\partial\Omega_{b}&=\left\{(x,y):-\bar x_{\eta}\le x\le\bar x_{\eta}\,, y=\pm\sqrt{\frac{b}{2}(1-\eta f(x))}(1-x^2)(1+o(1))\right\}\subset \mathbb H^2
\end{split}
\end{equation}
where $\eta>0$ is a suitable constant and $f$ is a polynomial of degree $2n$;
\item (sharpness of  \eqref{assumption} and \eqref{assumption2}). We have that
\begin{equation}\label{cla}
\lim\limits_{b\to0}(\cos(\theta)k_{\Omega_b}+\sin(\theta)\langle\nu,\vec{e_{\theta}}\rangle)\ge0\,\ \ \hbox{ on }\partial\Omega_b,\subset \mathbb S^2,
\end{equation}
\begin{equation}\label{cla1}
\lim\limits_{b\to0}(\cosh(r)k_{\Omega_b}-\sinh(r)\langle\nu,\vec{e_r}\rangle)\ge0\,\ \ \hbox{ on }\partial\Omega_b\subset \mathbb H^2,
\end{equation}

where $\theta={\rm dist}(\cdot,0)$ is the geodesic distance from the origin (i.e., from the north pole) in spherical coordinates and $r={\rm dist}(\cdot,0)$ is the geodesic distance from the origin in $ \mathbb H^2$.
\end{enumerate}
\end{thm}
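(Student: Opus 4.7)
The plan is to mimic the construction of \cite{GG22,EGG25} but intrinsically, working in conformal coordinates (stereographic projection from the south pole on $\mathbb S^2$, Poincar\'e disk on $\mathbb H^2$). I would first fix a polynomial $f$ of degree $2n$ with $n$ non-degenerate local minima $x_1<\cdots<x_n$ inside a symmetric interval $(-\bar x_\eta,\bar x_\eta)$, normalized so that $\eta f(\pm\bar x_\eta)=1$ and $\eta f<1$ in the interior. Then I would \emph{define} $\Omega_b$ by the formula \eqref{cla-f}, so that (b) and (d) are built into the construction. The factor $1\pm x^2$ in the boundary profile $h_b$ is the conformal factor of the corresponding metric evaluated at $y=0$; it is inserted so that the rescaled profile below becomes particularly simple.

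To produce the $n$ maxima I would rescale the thin direction by $Y=y/\sqrt b$ and write $u_b(x,y)=b\,v_b(x,Y)$. In spherical conformal coordinates the equation $-(u_{xx}+u_{yy})=4/(1+x^2+y^2)^2$ becomes
\begin{equation*}
-v_{b,YY}-b\,v_{b,xx}=\frac{4}{(1+x^2+b\,Y^2)^2}
\end{equation*}
on the rescaled domain $|Y|<g(x)$, where $g(x)=\sqrt{(1-\eta f(x))/2}\,(1+x^2)$; an entirely analogous equation with $(1-x^2)$ in place of $(1+x^2)$ holds in $\mathbb H^2$. Setting $b=0$ yields an exactly solvable $Y$-ODE whose solution is
\begin{equation*}
v_0(x,Y)=\frac{2}{(1+x^2)^2}\bigl(g(x)^2-Y^2\bigr),\qquad v_0(x,0)=1-\eta f(x),
\end{equation*}
so on the axis $Y=0$ the leading-order profile has exactly $n$ non-degenerate local maxima at $x_1,\dots,x_n$. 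Each such point is a non-degenerate critical point of $v_0$ in the plane (the Hessian is block-diagonal with entries $-4/(1+x_i^2)^2$ in $Y$ and a negative multiple of $f''(x_i)$ in $x$), so standard elliptic estimates giving $v_b\to v_0$ in $C^2_{\mathrm{loc}}(\{|x|<\bar x_\eta\})$, combined with the implicit function theorem, yield at least $n$ non-degenerate local maxima of $u_b$ for every sufficiently small $b$, proving (a).

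Item (c) reduces, via conformality, to Euclidean starshapedness of the set $\{|y|<h_b(x)\}$, which is immediate from $h_b>0$ on $(-\bar x_\eta,\bar x_\eta)$ and $h_b(\pm\bar x_\eta)=0$. Item (e) is verified by parametrising $\partial\Omega_b$ by $y=\pm h_b(x)$, computing the Euclidean curvature $\kappa_E=-h_b''/(1+h_b'^2)^{3/2}$ and the outer Euclidean normal, then converting to the intrinsic quantities $k_{\Omega_b}$ and $\langle\nu,\vec{e_\theta}\rangle$ (resp.\ $\langle\nu,\vec{e_r}\rangle$) via the conformal change-of-metric formulas already used in the proofs of Theorems \ref{main} and \ref{main2}. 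Since $h_b,h_b'=O(\sqrt b)$ on compact subsets of $(-\bar x_\eta,\bar x_\eta)$, both $k_{\Omega_b}$ and $\langle\nu,\vec{e_\theta}\rangle$ tend to zero there, so the combination in \eqref{cla}--\eqref{cla1} vanishes in the limit and the non-strict inequality holds; the sign near the tips $x=\pm\bar x_\eta$ is controlled by the normalisation $\eta f(\pm\bar x_\eta)=1$.

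The main obstacle is the asymptotic analysis: near the tips the rescaled domain pinches, so $v_b\to v_0$ holds only in $C^2_{\mathrm{loc}}$ away from $x=\pm\bar x_\eta$ and not globally in $C^2$. This is not a real problem for (a) because the limiting maxima of $v_0$ lie strictly inside $(-\bar x_\eta,\bar x_\eta)$ by construction, but it requires care to ensure that no spurious behaviour of $u_b$ near the tips spoils the global picture; a matched-asymptotic or weighted-Sobolev argument should suffice.
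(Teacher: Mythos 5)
Your construction inverts the logic of the paper's. You \emph{prescribe} $\Omega_b$ as the region $\{|y|<h_b(x)\}$ under the profile $h_b(x)=\sqrt{\frac{b}{2}(1-\eta f(x))}(1\pm x^2)$ and then study the (implicit) torsion function on it via thin-domain rescaling. The paper instead prescribes the \emph{solution} explicitly, namely $u_b=\psi_b-b\eta v$ with $\psi_b$ the elementary torsion function of the geodesic strip and $v=\operatorname{Re}\bigl(\prod_i(z^2-a_i^2)\bigr)$ harmonic, and only afterwards defines $\Omega_b$ as the connected component of $\{u_b>0\}$ containing the origin and derives the boundary asymptotics (Lemma \ref{ma11}). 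Because $u_b$ is explicit, the paper never has to pass to a singular-perturbation limit; everything is verified by direct Taylor expansion of a closed-form expression. This is not a cosmetic difference: it is precisely what makes items (a), (c) and (e) tractable.

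There are two genuine gaps in your route. First, the convergence $v_b\to v_0$ in $C^2_{\mathrm{loc}}$ for the rescaled problem $-v_{b,YY}-b\,v_{b,xx}=4/(1+x^2+bY^2)^2$ on $\{|Y|<g(x)\}$ is \emph{not} a consequence of ``standard elliptic estimates'': the operator degenerates in the $x$-direction as $b\to0$, and the passage to the $Y$-ODE limit with two derivatives of control is a thin-domain/boundary-layer asymptotics result that needs a dedicated proof (interior Schauder constants blow up as $b\to0$). Without that $C^2$-convergence the implicit-function-theorem step giving $n$ non-degenerate maxima is unsupported, whereas the paper reads off the $n$ maxima from the explicit superlevel set $\{u_b>b\}$ being disconnected (Lemmas \ref{l4b}--\ref{l5b} and Corollary \ref{maxc}). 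Second, your claim that starshapedness of $\{|y|<h_b(x)\}$ is ``immediate from $h_b>0$ inside and $h_b=0$ at the tips'' is wrong: $h_b$ is not concave, in fact it dips at each local maximum of $f$, so the set is starshaped with respect to the origin only if $h_b(x)/x$ is monotone, which unwinds to exactly the inequality
\begin{equation*}
-2(1-\eta f(x))(1-x^2)-\eta\,x f'(x)(1+x^2)\le-\alpha<0,
\end{equation*}
i.e.\ the paper's \eqref{ma30}. That inequality holds only because $\eta$ is tuned to $f$ through \eqref{fg} and \eqref{eta}; a generic polynomial with $n$ non-degenerate minima normalized as you describe may fail it. You would need to build the constraint \eqref{fg} into your choice of $f$ and $\eta$, and then prove the inequality (this is essentially the content of Proposition \ref{maxp}, and is one of the longest computations in the section). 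The same normalization is also what drives the tip asymptotics used in item (e), so glossing over it propagates.

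Your overall geometric picture (thin domain collapsing onto a geodesic segment, $n$ maxima coming from the bumps of $1-\eta f$, conformal reduction to Euclidean quantities for (c) and (e)) is the same as the paper's, so if you are committed to the thin-domain PDE route you would have to supply the singular-perturbation estimate and the starshapedness computation; it may be simpler to follow the paper and take $u_b=\psi_b-b\eta v$ with $v$ harmonic, which makes every step elementary.
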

The two examples are built in a similar way: in both cases we start with a solution of the torsion problem \eqref{T} in a {\it strip} around a geodesic (i.e., the set of points at distance at most $c>0$ from the geodesic). In both cases, the solution is explicit and has a continuum of critical points: the whole geodesic. 
Then we perturb the solution adding an harmonic function $bv$ where $b$ is a positive parameter which will go to zero and $v$ is an harmonic polynomial of degree $2n$ with zeros satisfying some suitable assumptions.
 
It is interesting to note that, taking a geodesic $\gamma$ in $\mathbb S^2$ or $\mathbb H^2$, a point $P\in\gamma$, a strip $S=\{x:{\rm dist(x,\gamma)}<c\}$, $c>0$, and $r={\rm dist(\cdot,P)}$, then it is standard to check that the right-hand sides of \eqref{assumption} and \eqref{assumption2} are identically zero on  $\partial S$, giving already an hint of the fact that the conditions are optimal.

\subsection{Proof of Theorem \ref{T1}}\label{s7}
We will consider the following two models:\\   $\left(\mathbb R^2,\frac{4}{(1+x^2+y^2)^2}(dx^2+dy^2)\right)$ for $\mathbb S^2$ and $\left(D,\frac{4}{(1-x^2-y^2)^2}(dx^2+dy^2)\right)$ for $\mathbb H^2$, where $D=\{(x,y):x^2+y^2<1\}$ is the unit disk in $\mathbb R^2$. The model for the Hyperbolic plane is just the Poincaré Disk model, while the standard metric for the sphere is written in stereographic coordinates. If the stereographic projection is from the south pole, then $D$ corresponds to the northern hemisphere, while $\mathbb R^2\setminus D$ corresponds to the southern hemisphere.

Let  $b>0$ and $\gamma:=\{(x,y):y=0\}$. Define

\begin{equation}\label{sfb}
\mathcal S_b:=\{p\in M: {\rm dist}(p,\gamma))<d_M(\sqrt{\tanh(b/2)})\}.
\end{equation}
Here ${\rm dist}$ is the Riemannian distance and $d_M:[0,+\infty)\to[0,+\infty)$ is defined by
$$
d_M(\cdot)=\begin{cases}
2\arctan(\cdot )\,, & {\rm when\ }M=\mathbb S^2\\
2\arctanh(\cdot)\,, & {\rm when\ }M=\mathbb H^2.
\end{cases}
$$
In other words, $\mathcal S_b$ is the sets of points at distance at most $d_M(\sqrt{\tanh(b/2)})$ from $\gamma$. Note that in both models the curve $\gamma$ is a geodesic: in the Poincaré disk model, it is the segment $(-1,1)\times\{0\}$, while it corresponds to an arc of great circle passing through the poles in the spherical case. See Figure \ref{strips}. In particular, in the spherical case, $\mathcal S_b$ is a band around a great circle, see Figure \ref{strips_sphere}.

\begin{figure}
\begin{center}
  \includegraphics[width=\textwidth]{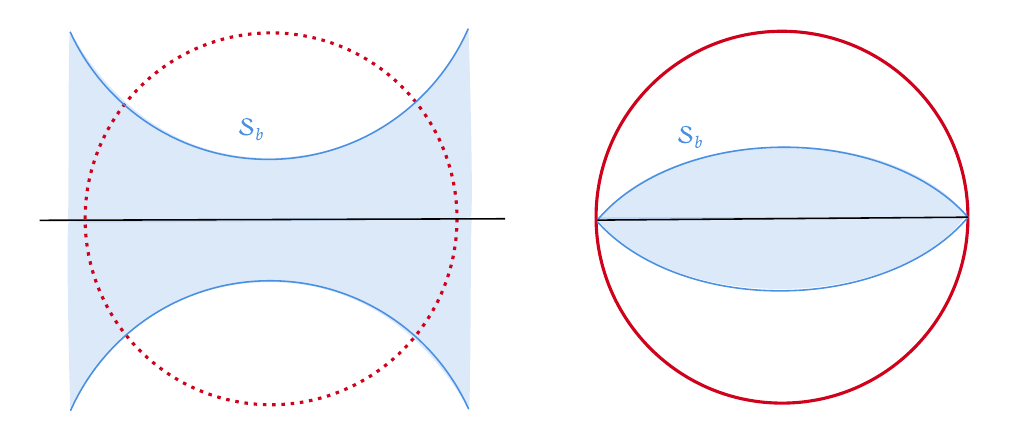}
  \caption{The strip $\mathcal S_b$ in the stereographic projection of $\mathbb S^2$ (left) and in the Poincaré disk model of $\mathbb H^2$ (right).}
  \label{strips}
\end{center}
\end{figure}

\begin{figure}
\begin{center}
  \includegraphics[width=\textwidth]{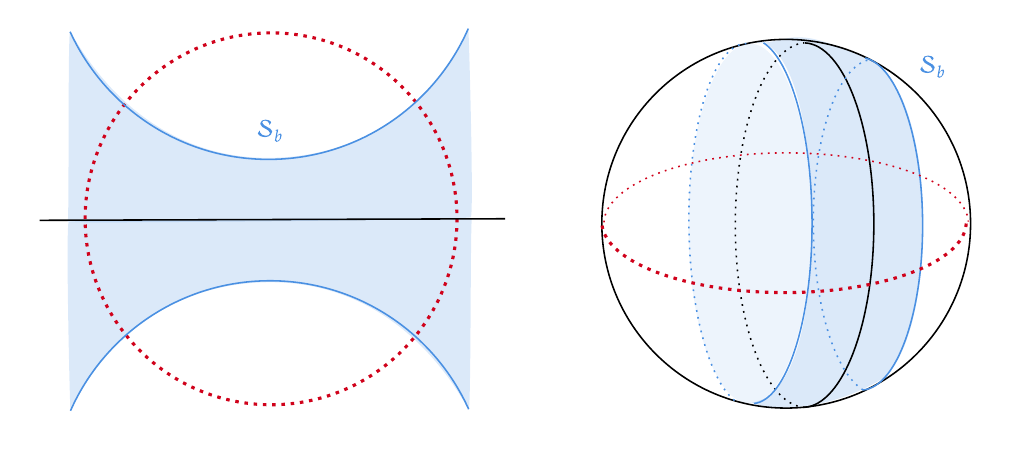}
  \caption{In the stereographic projection (from the south pole) of the sphere, the origin corresponds to the north pole, the $x$ axis to a great circle passing through the north pole, the unit circle (dotted red circle) corresponds to the equator.}
  \label{strips_sphere}
\end{center}
\end{figure}

A more explicit way of describing $\mathcal S_b$ is the following
\begin{equation}\label{sfb_2}
\mathcal S_b=\begin{cases}
(x,y) \in \R^2: 4y^2 < (1+x^2+y^2)^2(1-e^{-2b})\,, & M=\mathbb S^2\\
(x,y) \in D: 4y^2<(1-x^2-y^2)^2(e^{2b}-1)\,, & M=\mathbb H^2.
\end{cases}
\end{equation}
In the stereographic projection of $\mathbb S^2$, the strip is represented by the intersection of the exteriors of two disks, while in the Poincaré disk model of $\mathbb H^2$, the strip is represented by the intersection of two disks.

\medskip

In this section with  $\Delta$ we denote the Laplacian for the metric we are considering (that is, the standard metric of $\mathbb S^2$ or $\mathbb H^2$), while $\Delta_{\mathbb R^2}$ is the Laplacian for the usual Euclidean metric, namely $ \Delta_{\mathbb R^2}=\partial^2_{xx}+\partial^2_{yy}$.
 We consider a solution $\psi_b$ of the torsion problem \eqref{T} in $\mathcal S_b$. Namely $\psi_b$ satisfies 
\begin{equation}\label{ma40}
\begin{cases}
                        -\Delta \psi_b=1  &
            \mbox{  on }\mathcal S_b\\
\psi=0  & \mbox{  on }\partial \mathcal S_b
\end{cases}
\end{equation}
which is equivalent to
\begin{equation}\label{ma40a}
\begin{cases}
                        -\Delta_{\R^2} \psi_b=\frac4{\left(1+x^2+y^2\right)^2}
            & {\rm when\ }S_b\subset\mathbb S^2,\\\
u=0  & \mbox{  on }\partial \mathcal S_b
\end{cases}
\end{equation}
and
\begin{equation}\label{ma40b}
\begin{cases}
                        -\Delta_{\R^2} \psi_b=\frac4{\left(1-x^2-y^2\right)^2}
            & {\rm when\ }S_b\subset\mathbb H^2,\\\
u=0  & \mbox{  on }\partial \mathcal S_b
\end{cases}
\end{equation}
It is easily seen that
\begin{equation}\label{psib}
\psi_b(x,y)=
\begin{cases}
\frac12\log\left(1-\frac{4y^2}{(1+x^2+y^2)^2}\right)+b\,, {\rm when\ }M=\mathbb S^2,\\
-\frac12\log \left(1+\frac{4y^2}{(1-x^2-y^2)^2}\right)+b\,, {\rm when\ }M=\mathbb H^2,
\end{cases}
\end{equation}
is the unique solution to (\ref{ma40})  in the strip $\mathcal S_b$.

Moreover $\nabla \psi_b(x,y)=0 \iff y=0\iff (x,y)\in \gamma$.  

To obtain a function which verifies the properties of Theorem \ref{s7} in $\Omega_b$, we perturb $\psi_b$ adding an harmonic polynomial of degree $2n$ with $n>1$,  that we denote by $-b\eta v(x,y)$, where $\eta>0$ is a suitable parameter. Of course the perturbed function 
$$u_b=\psi_b-b\eta v$$ 
still verifies the torsion equation $-\Delta u_b=1$ and $\Omega_b$ will be given by the zero superlevel set of this function, see Figures \ref{fig4} and \ref{figIp}. The advantage of working with metrics that are conformal to the Euclidean one, is that we can look for the functions $v$ among harmonic functions for the Euclidean metric, and these functions will be used for both models. 
\begin{figure}
\includegraphics[width=\textwidth]{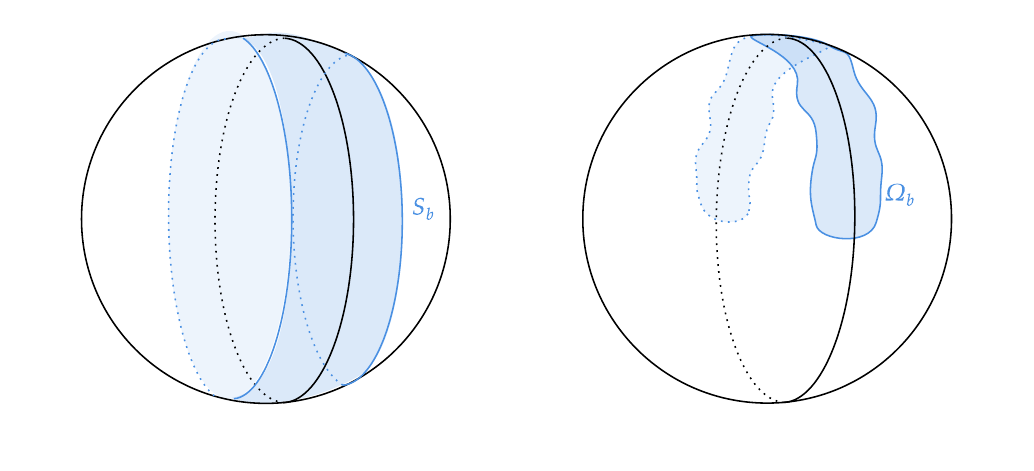}
\caption{The unperturbed domain $\mathcal S_b$ and the final domain $\Omega_{b}$ in the spherical case.}
\label{fig4}
\end{figure}
\begin{figure}
\begin{center}
  \includegraphics[width=\textwidth]{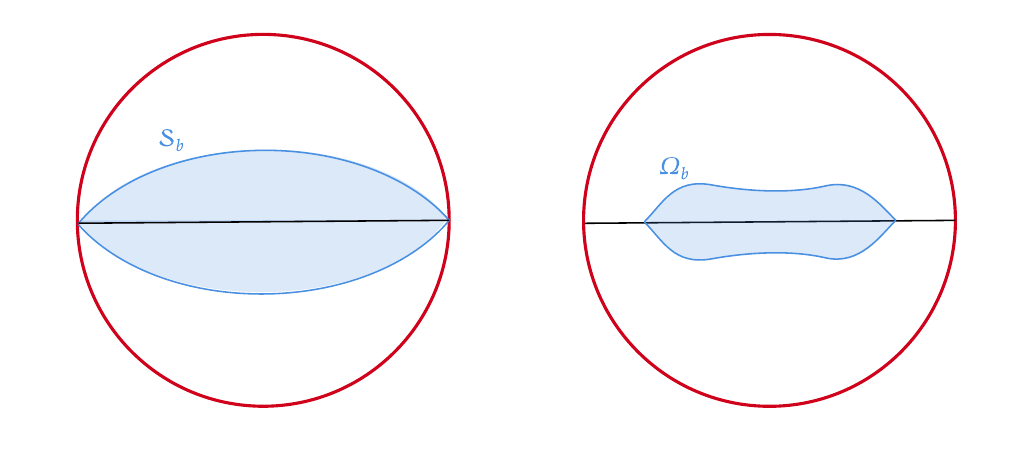}
  \caption{The unperturbed domain $\mathcal S_b$ and the final domain $\Omega_b$ in the hyperbolic case. }
    \label{figIp}
\end{center}
\end{figure}
\medskip

Next we introduce the function $v$.
Fix $n>1$, take $n$ real numbers $0<a_1<a_2<..<a_n<1$ and  define the real polynomial
\begin{equation}\label{max10}
f(x)=\Pi_{i=1}^n\left(x^2-a_i^2\right)
\end{equation}
so that $f(x)$ satisfies the following condition:
\begin{equation}\label{fg}
\begin{cases}
f(1)> \sup _{x\in (-a_n,a_n)}
\left[f(x)-\frac {x(1+x^2)}{2(1-x^2)}f'(x)\right] & {\rm when\ }M=\mathbb S^2\\
f(1)> \sup _{x\in (-a_n,a_n)}
\left[f(x)-\frac {x(1-x^2)}{2(1+x^2)}f'(x)\right]&{\rm when\ }M=\mathbb H^2.
\end{cases}
\end{equation}
Let us make some remarks on the properties of $f$ which will be used later,

First observe that 
\[\sup _{x\in (-a_n,a_n)}
\left[f(x)-\frac {x(1+x^2)}{2(1-x^2)}f'(x)\right]\ge \sup _{x\in (-a_n,a_n)} f(x)>0.\]
The polynomial $f(x)$ is even in $x$, it has $2n$ zeroes $\pm a_i$ for $i=1,..,n$,  it has $n$ minimum points $x_1,..,x_n\in (-a_n,a_n)$
and  $n-1$ maximum points $\tilde x_1,..,\tilde x_{n-1}\in (-a_n,a_n)$; all the minima are strictly negative and all the maxima are strictly positive.  

Moreover $f$ is strictly monotone in $|x|\geq a_n$ and for every $c\in (\sup _{x\in (-a_n,a_n)}f(x),f(1))$ there exists a unique $\bar x_c\in (0,1)$ such that $f(\bar x_c)=c.$ Finally  it satisfies 
\begin{equation}\label{ma32}
xf'(x)\ge0\quad\hbox{for any }a_n\le|x|.
\end{equation}
\begin{remark}
To prove Theorem \ref{T1} we need a polynomial that satisfies \eqref{fg} for every $n>1$.  A sufficient condition on the points $a_1,..,a_n$ is when the number {$a_n$ is small}. 
Indeed is easily seen that
$$\sup _{x\in (-a_n,a_n)}f(x),f'(x)\to0\quad\hbox{as }a_n\to0$$
and
$$\sup _{x\in (-a_n,a_n)}f(1)\to1\quad\hbox{as }a_n\to0.$$
Hence $\eqref{fg}$ holds for every $n$ when $a_1,...,a_n$ are small enough.
Nevertheless the fact that $a_n$ is small is not necessary for \eqref{fg} to hold as one can see with the following example
 for  $n=3$,  where $f(x)=(x^2-\frac 14)(x^2-\frac 1{16})(x^2-\frac 9{16})$.
\end{remark}
Now we define 
\begin{equation}\label{max10b}
\boxed{v(x,y):=Re\left( f(z)\right)=Re\big( \Pi_{i=1}^n\left(z^2-a_i^2\right)\big)}
\end{equation}
where $f(z)$ is the extension of $f(x)$ to the complex plane.
It is easy to see that $v(x,y)$ is an harmonic polynomial of degree $2n$ that depends only on $x^2$ and $y^2$.\\
We take 
\beq\label{eta}
\eta=\frac 12\left[\frac 1{f(1)}+\frac 1{\sup _{x\in (-a_n,a_n)}
\left[f(x)-\frac {x(1+x^2)}{2(1-x^2)}f'(x)\right]
}
\right]
\eeq
in the spherical case and 
\beq\label{eta2}
\eta=\frac 12\left[\frac 1{f(1)}+\frac 1{\sup _{x\in (-a_n,a_n)}
\left[f(x)-\frac {x(1-x^2)}{2(1+x^2)}f'(x)\right]
}
\right]
\eeq
in the hyperbolic case\\
 (alternatively we can choose $\eta \in \left(\frac 1{f(1)},\frac 1{\sup _{x\in (-a_n,a_n)}
\left[f(x)-\frac {x(1+x^2)}{2(1-x^2)}f'(x)\right]}
\right)$  in $\mathbb S^2$ and analogously in $\mathbb H^2$).\\
Of course, due to \eqref{fg} 
$\frac 1{\eta}\in \left(\sup _{x\in (-a_n,a_n)}
\left[f(x)-\frac {x(1+x^2)}{2(1-x^2)}f'(x)\right], f(1)\right)$. We define 
$$\boxed{u_{b}(x,y)=\psi_b(x,y)-b \eta v(x,y)}$$
where $\psi_b$ is as in \eqref{psib}, namely
\begin{equation}\label{mam-f}
 u_{b}(x,y)=
\begin{cases}
\frac12\log\left(1-\frac{4y^2}{(1+x^2+y^2)^2}\right)+b\big(1-\eta v(x,y)\big)\,, {\rm when\ }M=\mathbb S^2,\\
-\frac12\log \left(1+\frac{4y^2}{(1-x^2-y^2)^2}\right)+b\big(1-\eta v(x,y)\big)\,, {\rm when\ }M=\mathbb H^2,
\end{cases}
\end{equation}
and we set 
\beq\label{f3}
\boxed{
\Omega_{b}:={\rm connected\ component\ of\ }\{(x,y):u_{b}>0\}{\rm\ containing\ }(0,0).}
\eeq
In the following, we will always use the symbol $\Omega_b$ to denote the domain both in the spherical and in the hyperbolic case. Of course $u_b$ solves \eqref{ma40a} and \eqref{ma40b} in $\Omega_{b}$. 
Our aim is to show that when $b$ is small the domain $\Omega_b$ satisfy the properties of Theorem \ref{T1}.

 We start showing some properties of the domain $\Omega_b$.
 \begin{lemma}\label{l2b}
Let $u_b$ be as in \eqref{mam-f} and $\Omega_b$ be as in \eqref{f3}. The set $\Omega_b$ is not empty and contains the segment $\{(x,0): -\bar x_\eta<x<\bar x_\eta\}$, where $\bar x_\eta\in (a_n,1)$ is the unique positive solution to $f(x)=\frac 1{\eta}$. Moreover the points $(\pm \bar x_\eta,0)\in \partial \Omega _b$ for every $b$ and are the unique points of $\partial \Omega_b\cap \{y=0\}$. Further, when $b$ is small enough,  $\Omega_b$ is bounded and contained in $(-\bar x_\eta,\bar x_\eta)\times (-\delta,\delta)$,  with $\delta\in (0,1)$, and 
\begin{equation}\label{ma3}
\overline \Omega_b\to \{(x,0): -\bar x_\eta\leq x\leq \bar x_\eta\}\hbox{ as }b\to 0.
\end{equation}

\end{lemma}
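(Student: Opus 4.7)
The plan is to evaluate $u_b$ explicitly on $\{y=0\}$. Since $\psi_b(x,0)=b$ in both models (the logarithm in \eqref{psib} vanishes at $y=0$) and $v(x,0)=\mathrm{Re}\,f(x)=f(x)$, we have $u_b(x,0)=b(1-\eta f(x))$. By the choice of $\eta$ in \eqref{eta}--\eqref{eta2}, $1/\eta$ lies strictly between $\sup_{(-a_n,a_n)} f$ and $f(1)$; combined with \eqref{ma32}, which forces $f$ to be strictly increasing on $[a_n,+\infty)$, this shows that the equation $f(x)=1/\eta$ has a unique positive solution $\bar x_\eta\in(a_n,1)$ and that $f(x)<1/\eta$ precisely on $(-\bar x_\eta,\bar x_\eta)$. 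Hence the horizontal segment $(-\bar x_\eta,\bar x_\eta)\times\{0\}$ lies in $\{u_b>0\}$ and, being connected and containing the origin, is contained in $\Omega_b$. At the same time $\pm\bar x_\eta$ are the only zeros of $u_b(\cdot,0)$, which yields the first two assertions of Lemma \ref{l2b}.

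\textbf{Stage 2 (confinement in a thin rectangle).} Fix $\delta\in(0,1)$ small enough (in the hyperbolic case, $\delta^2<1-\bar x_\eta^2$, so that $R_\delta:=[-\bar x_\eta,\bar x_\eta]\times[-\delta,\delta]$ lies inside the Poincar\'e disk). I would show that, for $b$ small, $u_b\le 0$ on $\partial R_\delta$ with equality only at the two corners $(\pm\bar x_\eta,0)$. On the horizontal edges $\{|y|=\delta\}$ this is immediate from \eqref{psib}: $\psi_b(x,\pm\delta)-b$ is bounded above by some $-c_\delta<0$ uniformly for $|x|\le\bar x_\eta$, while $b\eta|v|=O(b)$, so $u_b(x,\pm\delta)<0$ for $b$ small. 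On the vertical edges $\{x=\pm\bar x_\eta,\ |y|\le\delta\}$ both $\psi_b-b$ and $v-1/\eta$ vanish at $y=0$; here I would combine the elementary estimates $\tfrac12\log(1-h)\le-h/2$ (spherical case) and $-\tfrac12\log(1+h)\le-h/4$ for $h\in[0,1]$ (hyperbolic case) with the Taylor expansion $v(\pm\bar x_\eta,y)=1/\eta+O(y^2)$ of the harmonic polynomial $v$ about $y=0$ to obtain
\[
u_b(\pm\bar x_\eta,y)\le\bigl(-c'_\delta+b\eta C_\delta\bigr)y^2,\qquad |y|\le\delta,
\]
with $c'_\delta,C_\delta>0$ independent of $b$. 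Choosing $b<c'_\delta/(\eta C_\delta)$, the right-hand side is strictly negative for $y\ne 0$.

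Once the sign statement on $\partial R_\delta$ is established, $\Omega_b$ is an open connected subset of $\{u_b>0\}$ disjoint from $\partial R_\delta$; since it contains the origin it must lie in the open rectangle $(-\bar x_\eta,\bar x_\eta)\times(-\delta,\delta)$, giving both the boundedness and the stated inclusion. The Hausdorff convergence \eqref{ma3} then follows by letting $\delta\to 0$ (each time with $b$ below the corresponding threshold $b_\delta$): the outer inclusion shrinks to the segment while the inner inclusion $(-\bar x_\eta,\bar x_\eta)\times\{0\}\subset\Omega_b$ from Stage 1 is $b$-independent.

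The main obstacle is precisely the vertical edge of $R_\delta$: at the corners $(\pm\bar x_\eta,0)$ one has $u_b=0$, so no naive compactness argument delivers a strict sign. The quantitative $y^2$-estimate above, which extracts the correct leading behavior of $\psi_b-b$ and $v-1/\eta$ and exhibits the negative contribution of $\psi_b$ as dominant for $b$ small, is exactly what prevents $\Omega_b$ from protruding past the two endpoints of the limiting segment.
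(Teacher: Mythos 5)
Your proof is correct and rests on the same two pillars as the paper's: the explicit evaluation $u_b(x,0)=b(1-\eta f(x))$ on the axis, and the observation that near the corners $(\pm\bar x_\eta,0)$ the logarithmic part of $\psi_b-b$ behaves like $-c\,y^2$ and dominates the $O(b\,y^2)$ contribution coming from $b\eta(v-1/\eta)$.

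The route you take in Stage 2 is, however, somewhat more streamlined than the paper's. The paper proceeds in three substeps: it first proves $\Omega_b\subset D$ in the spherical case by inspecting $u_b$ on the unit circle; it then proves $\Omega_b\subset\{|x|<\bar x_\eta\}$ by a contradiction argument involving sequences $b_k\to0$, $y_k$ with $u_{b_k}(\bar x_\eta,y_k)>0$, and a second-order Taylor expansion in $y$ showing $\partial_y^2 u_{b_k}(\bar x_\eta,0)<0$ uniformly; and it finally gets the vertical confinement and Hausdorff limit \eqref{ma3} from the uniform convergence $u_b\to\log$-term$\le0$ on compacts. You instead verify the sign of $u_b$ directly on the four edges of the rectangle $R_\delta$: the horizontal edges are handled by the same ``log dominates $O(b)$'' observation as the paper, while the vertical edges are handled by the elementary inequalities $\tfrac12\log(1-h)\le -h/2$ (resp.\ $-\tfrac12\log(1+h)\le -h/4$) together with the even Taylor expansion of $v$ about $y=0$. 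This gives the boundedness, the containment in $R_\delta$, and the Hausdorff convergence in a single stroke, replacing the paper's boundedness step and its contradiction argument by a direct sign check; the price is a little more care in choosing $\delta$ (in the hyperbolic case you need $\delta$ small enough that $h=\tfrac{4y^2}{(1-x^2-y^2)^2}\le 1$ on $R_\delta$, which is a strictly stronger requirement than the stated $\delta^2<1-\bar x_\eta^2$, but is still achievable for $\delta$ small). Either argument is complete; yours is a bit more self-contained since it never needs to introduce a separate containment-in-$D$ step for the spherical case.
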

\begin{proof}
First we compute $u_b(0,0)=b-b\eta v(0,0)=b(1-\eta f(0))>0$ we have that $f(0)\le \sup _{x\in (-a_n,a_n)}f(x)<\frac 1 \eta$ by the definition of $\eta$ and the properties of $f(x)$. This shows that $\Omega_b$ is not empty for every $b$.\\
Now we compute $u_b$ along the $x$-axis and we have
\begin{equation}
u_b(x,0)=b\big(1-\eta v(x,0)\big)=b\big(1-\eta f(x)\big).
\end{equation}
Then \eqref{eta}, the definition of $\bar x_\eta$ and the properties of $f(x)$ implies that $f(\bar x_\eta)=\frac 1{\eta}$,  $f(x)<\frac1\eta$ for $|x|<\bar x_\eta$ and $f(x)>\frac1\eta$ for $|x|>\bar x_\eta$. Hence the segment $\{(x,0): -\bar x_\eta<x<\bar x_\eta\}\subset \Omega_b$ for every $b$ and the points 
$(\pm \bar x_\eta,0)$ are the unique points of $\partial \Omega_b\cap \{y=0\}$.
Now we show that when $b$ is small $\Omega_b\subset D$ in the spherical case since in the hyperbolic case is already verified.   Let us compute $u_b(x,y)$ along the unit circle $x^2+y^2=1$. When $y=0$ 
$$u_b(\pm 1,0)=b(1-\eta v(\pm 1,0))=b(1-\eta f(1))<0 $$
by the definition of $\eta$.  By the continuity of $v(x,y)$ there exists $\delta>0$ such that $v(\pm \sqrt{1-y^2},y)<\frac 1\eta$ for $|y|<\delta$ so that $u_b(\pm \sqrt{1-y^2},y)<0$ for $|y|<\delta$.  For $|y|\geq \delta$ instead 
\[\begin{split}
u_b( \pm \sqrt{1-y^2},y)&=\frac 12 \log\left(1-2y^2\right)+b(1-\eta v(\pm \sqrt{1-y^2},y))\\
&\leq \frac 12 \log\left(1-2\delta^2\right)+b(1+\eta \sup_{|y|\leq 1} v(\pm \sqrt{1-y^2},y))<0
\end{split}\]
if $b$ satisfies 
\[b<\frac{-\frac 12 \log\left(1-2\delta^2\right)}{1+\eta \sup_{|y|\leq 1} v(\pm \sqrt{1-y^2},y))}.\]
This proves that $u_b(x,y)$ is negative on the unit circle, for $b$ small enough, and shows that $\Omega_b\subset D$ in the spherical case.

Now we prove that $\Omega_b\subset \{(x,y)\in D: -\bar x_\eta<x<\bar x_\eta\}.$
Let us suppose by contradiction that there exists a sequence $b_k$, such that $b_k\to 0$ as $k\to \infty$ and points $y_k\in (-1,1)$ such that $u_{b_k}(\bar x_\eta,y_k)>0$.  Since $|b(1-\eta v(\bar x_\eta,y_k))|\leq Cb_k\to 0$ and since $\frac 12 \log \left(1-\frac{4y^2}{(1+\bar x_\eta^2+y^2)^2}\right)<0$ for $|y|> 0$ then, up to a subsequence,  $y_k\to 0$ as $k\to \infty$. Then
\[u_{b_k}(\bar x_\eta,y_k)=u_{b_k}(\bar x_\eta,0)+y_k \frac{\partial u_{b_k}}{\partial y}(\bar x_\eta,0)+\frac 12 y^2_k \frac{\partial^2 u_{b_k}}{\partial y^2}(\bar x_\eta,0)+o(y_k^2)\]
as $k\to \infty$. Moreover $u_{b_k}(\bar x_\eta,0)=0$ by the definition of $\bar x_\eta$ and $\frac{\partial u_{b_k}}{\partial y}(\bar x_\eta,0)=0$ since $u_b(x,y)$ depends on $y^2$.  Further
$\frac{\partial^2 u_{b_k}}{\partial y^2}(\bar x_\eta,0)=-\frac 4{(1+\bar x_\eta^2)}-b_k\eta \frac{\partial^2 v}{\partial y^2}(\bar x_\eta,0)$ and, since $v(x,y)$ is a polynomial $|\frac{\partial^2 v}{\partial y^2}(\bar x_\eta,0)|\leq C$
so that 
$\frac{\partial^2 u_{b_k}}{\partial y^2}(\bar x_\eta,0)<-\frac 2{(1+\bar x_\eta^2)}<0$ when $b<\bar b$ and $\bar b$ depends only on $\bar x_\eta$ and $\eta \frac{\partial^2 v}{\partial y^2}(\bar x_\eta,0)$.  This also shows that when $k$ is large and $b_k<\bar b$ then 
\[u_{b_k}(\bar x_\eta,y_k)<y_k^2\left(-\frac 2{(1+\bar x_\eta^2)}+o(y_k)\right)<0\]
and gives a contradiction with $u_{b_k}(\bar x_\eta,y_k)>0$. This proves that $\Omega_b\subset \{(x,y)\in D: -\bar x_\eta<x<\bar x_\eta\}$ in the spherical case.  
A very similar proof allows to have the same result in the hyperbolic case. 

Finally \eqref{ma3} is an easy consequence of the fact that  $|v(x,y)|\leq C$ in $\Omega_b$. Indeed
$$
u_b(x,y)\to \begin{cases}\frac 12 \log \left(1-\frac{4y^2}{(1+x^2+y^2)^2}\right)\leq 0\,, & {\rm when\ }M=\mathbb S^2,\\
-\frac 12 \log \left(1+\frac{4y^2}{(1-x^2-y^2)^2}\right)\leq 0\,, & {\rm when\ }M=\mathbb H^2,
\end{cases}
$$
and the convergence is uniform on compact subsets of $M$. 
\end{proof}
Next lemma will be used several times. It claims that $ \partial \Omega_b$ is ``almost'' a graph.
\begin{lemma}\label{ma11}
Let $(x_b,y_b)\in \partial \Omega_b$. If,  as $b\to 0$ $x_b\to \bar x\in [-\bar x_\eta,\bar x_\eta]$, then
{\small
\begin{equation}\label{ma6}
y_b=\begin{cases}
\pm\sqrt{\frac{1}2b\big(1-\eta f(\bar x)-\eta f'(\bar x)(x_b-\bar x)\big)}(1+\bar x^2)(1+O(y_b^2,|x_b-\bar x|^2,\frac{y_b^2}b|x_b-\bar x|))\,, & {\rm when\ }M=\mathbb S^2\\
\pm\sqrt{\frac{1}2b\big(1-\eta f(\bar x)-\eta f'(\bar x)(x_b-\bar x)\big)}(1-\bar x^2)(1+O(y_b^2,|x_b-\bar x|^2,\frac{y_b^2}b|x_b-\bar x|))\,, & {\rm when\ }M=\mathbb H^2.
\end{cases}
\end{equation}}
\end{lemma}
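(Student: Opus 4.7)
The proof proposal is as follows. The starting point is that $(x_b, y_b) \in \partial \Omega_b$ means $u_b(x_b, y_b) = 0$, which (in the spherical case) reads
\[
\tfrac12 \log\!\left(1 - \tfrac{4 y_b^2}{(1+x_b^2+y_b^2)^2}\right) + b(1 - \eta\, v(x_b, y_b)) = 0,
\]
and analogously in the hyperbolic case. The plan is to treat this as an implicit equation for $y_b$ in terms of $x_b$ (for $x_b$ near $\bar x$) and Taylor-expand, exploiting that Lemma \ref{l2b} already guarantees $y_b \to 0$ and $x_b - \bar x \to 0$ as $b \to 0$.

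First I would expand the logarithmic term. Using $\log(1-t) = -t + O(t^2)$ with $t = 4y_b^2/(1+x_b^2+y_b^2)^2 = O(y_b^2)$ (and $(1+x_b^2+y_b^2)^2 = (1+\bar x^2)^2(1 + O(|x_b-\bar x|, y_b^2))$), this gives
\[
\tfrac12 \log\!\left(1 - \tfrac{4 y_b^2}{(1+x_b^2+y_b^2)^2}\right) = -\frac{2 y_b^2}{(1+\bar x^2)^2}\bigl(1 + O(|x_b-\bar x|, y_b^2)\bigr).
\]
Next I would expand $v$ around $(\bar x, 0)$. Since $v(x,y) = \operatorname{Re} f(x+iy)$ with $f$ even, $v$ is harmonic and even in $y$, so $v_y(\bar x, 0) = 0$ and $v(x,0) = f(x)$; hence
\[
v(x_b, y_b) = f(\bar x) + f'(\bar x)(x_b - \bar x) + O\bigl((x_b-\bar x)^2, y_b^2\bigr).
\]
Substituting both expansions into the equation $u_b = 0$ and isolating $y_b^2$ yields
\[
\frac{2 y_b^2}{(1+\bar x^2)^2} = b\bigl(1 - \eta f(\bar x) - \eta f'(\bar x)(x_b-\bar x)\bigr)\bigl(1 + R\bigr),
\]
with a remainder $R$ of order $O(y_b^2, (x_b-\bar x)^2, \tfrac{y_b^2}{b}|x_b-\bar x|)$ coming from multiplying through by $(1+O(|x_b-\bar x|, y_b^2))$ and absorbing the $O$-terms from the $v$-expansion into the leading factor. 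Taking square roots gives exactly \eqref{ma6} in the spherical case.

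The hyperbolic case is handled identically, with the only modifications being $\log(1+t) = t + O(t^2)$ and the factor $(1-\bar x^2)^2$ replacing $(1+\bar x^2)^2$; the form of $v$ and its Taylor expansion are unchanged because $v$ is the same harmonic polynomial. The main obstacle, in my view, is not the algebra but the careful bookkeeping of the mixed error term $\tfrac{y_b^2}{b}|x_b-\bar x|$: it appears because the product of the $O(|x_b-\bar x|)$ piece in the expansion of $\psi_b$ with the leading-order relation $y_b^2 \sim b$ has to be reinserted as a multiplicative $(1+o(1))$ factor after extracting the square root, and one has to verify that this specific combination controls all cross-terms consistently on both sides of the identity.
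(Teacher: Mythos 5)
Your proof takes essentially the same route as the paper: expand the boundary identity $u_b(x_b,y_b)=0$ around $(\bar x,0)$ (using $y_b\to 0$, $v(x,0)=f(x)$, and $v_y(\bar x,0)=v_{xy}(\bar x,0)=0$), isolate $y_b^2/b$, and take the square root. The one step the paper makes explicit and you leave as a remark is that the expanded identity itself forces $y_b^2/b$ to be bounded, which is precisely what justifies writing the cross-error in the form $\tfrac{y_b^2}{b}|x_b-\bar x|$ inside the $O(\cdot)$ before extracting the root.
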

\begin{proof}
We prove the result in the case $M=\mathbb S^2$, the other case is analogous. A point $(x_b,y_b)\in\partial\Omega_b$ satisfies
\begin{equation}\label{f56}
\frac 12 \log \left(1-\frac{4y_b^2}{(1+x_b^2+y_b^2)^2}\right)=b\left(\eta v(x_b,y_b)-1\right).
\end{equation}
Since $|x_b|,|y_b|\leq C$ the right-hand side of \eqref{f56} is bounded by $Cb$ and goes to zero as $b\to 0$. Then,  if $x_b\to \bar x\in[-\bar x_\eta,\bar x_\eta]$,  $y_b\to 0$. We can then expand the left-hand side  and the right-hand side of \eqref{f56} and we get
\[\eta v(x_b,y_b)-1=\eta v(\bar x,0)-1
+\eta \frac{\partial v}{\partial x}(\bar x,0)(x_b-\bar x)+\eta \frac 12 \frac{\partial ^2v}{\partial y^2}(\bar x,0)y_b^2+O(|(x_b-\bar x)|^2)+o( y_b^2)\]
(where we used that $\frac{\partial v}{\partial y}(\bar x,0)=0$ and $\frac{\partial^2 v}{\partial xy}(\bar x,0)=0$
)
so that
\[\eta v(x_b,y_b)-1=\eta f(\bar x)-1+\eta f'(\bar x)(x_b-\bar x)
+O(y_b^2,|(x_b-\bar x)|^2)
\]
and
\[\frac 12 \log \left(1-\frac{4y_b^2}{(1+x_b^2+y_b^2)^2}\right)=-\frac{2y_b^2}{(1+\bar x^2)^2}+O\left(y_b^4,y_b^2|x_b-\bar x|\right).\]
Then \eqref{f56} becomes 
\[\frac{2y_b^2}{(1+\bar x^2)^2}+O\left(y_b^4,y_b^2|x_b-\bar x|\right)=b\left[1-\eta f(\bar x )-\eta f'(\bar x)(x_b-\bar x)
+O(y_b^2,(x_b-\bar x)^2)\right].\]
This imples that the ratio $\frac {y_b^2}b$ is bounded, and 
\beq\label{f66}
\frac{y_b^2}b=\frac 12(1+\bar x^2)^2\left[1-\eta f(\bar x )-\eta f'(\bar x)(x_b-\bar x)\right]\left(1
+O(y_b^2,|x_b-\bar x|^2,\frac{y_b^2}b|x_b-\bar x|)\right)\eeq
and \eqref{ma6} follows.
\end{proof}
\begin{remark}
As consequence of the previous lemma,  when $\bar x\neq \bar x_\eta$ then $y_b^2\sim b$ since $1-\eta f(\bar x)\neq0$. 
If $\bar x=\bar x_\eta$ instead,  then $y_b^2\sim b(x_b-\bar x_\eta)$ since $1-\eta f(\bar x_\eta)=0$ and $f'(\bar x_\eta)\neq 0$. 
\end{remark}

Next aim is to prove that the superlevel set
\beq\label{omega-b}
\omega_b:=\{(x,y)\in \Omega_b: u_b(x,y)>b\}
\eeq
has $n$ different components. We start with a first property,
\begin{lemma}\label{l4b}
Let $x_1,..,x_n$ be the minimum points of the polynomial $f(x)$ in $(-a_n,a_n)$. For every $b>0$, the points $(x_i,0)\in \omega_b$ for $i=1,..,n$.
\end{lemma}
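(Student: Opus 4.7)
The plan is short because the statement reduces to a direct computation along the axis $\gamma=\{y=0\}$. The key observation is that in both models the logarithmic part of $\psi_b$ vanishes identically on $\gamma$: in the spherical case $\tfrac12\log(1-0)=0$ and in the hyperbolic case $-\tfrac12\log(1+0)=0$. Hence $\psi_b(x,0)=b$ for every $x$ with $(x,0)\in\mathcal S_b$, and since $\gamma$ lies strictly inside the strip $\mathcal S_b$, this applies in particular to each $(x_i,0)$.

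From the definition of $u_b$ in \eqref{mam-f} and the definition \eqref{max10b} of $v$, one has $v(x,0)=\operatorname{Re} f(x)=f(x)$ for real $x$, so
\[
u_b(x_i,0)=\psi_b(x_i,0)-b\eta\,v(x_i,0)=b\bigl(1-\eta f(x_i)\bigr).
\]
Recall from the properties of $f$ listed just after \eqref{fg} that all $n$ minima $x_1,\dots,x_n$ of $f$ in $(-a_n,a_n)$ are \emph{strictly negative}, i.e. $f(x_i)<0$. Since $\eta>0$, this gives $1-\eta f(x_i)>1$, hence $u_b(x_i,0)>b$ for every $b>0$.

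It remains to check that $(x_i,0)$ actually belongs to $\Omega_b$ (otherwise the inequality $u_b>b$ would be meaningless for the definition \eqref{omega-b} of $\omega_b$). But $x_i\in(-a_n,a_n)$ and Lemma \ref{l2b} gives $\bar x_\eta\in(a_n,1)$ together with the inclusion $\{(x,0):-\bar x_\eta<x<\bar x_\eta\}\subset\Omega_b$; therefore $(x_i,0)\in\Omega_b$ and consequently $(x_i,0)\in\omega_b$, which is what we had to prove.

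There is really no obstacle here: the statement is essentially a sanity check saying that the harmonic perturbation $-b\eta v$ strictly \emph{raises} the torsion profile above the baseline value $b$ precisely at those real points where $f$ attains a (negative) minimum, and those points lie well inside the safe segment of $\gamma$ provided by Lemma \ref{l2b}. The only ingredients used are (i) $\psi_b|_\gamma\equiv b$, (ii) $v|_\gamma=f$, (iii) the sign property $f(x_i)<0$, and (iv) the inclusion of the relevant segment in $\Omega_b$.
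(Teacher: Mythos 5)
Your proof is correct and follows essentially the same route as the paper: the paper's own argument is the one-line computation $u_b(x_i,0)=b-\eta b f(x_i)>b$ using $f(x_i)<0$. You add the (welcome) explicit verification that $(x_i,0)\in\Omega_b$ via Lemma \ref{l2b}, which the paper leaves implicit, but there is no substantive difference in approach.
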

\begin{proof}
We compute
\begin{equation}
u_b(x_i,0)=b-\eta bf(x_i)>b
\end{equation}
since $f(x_i)<0$ for $i=1,..,n$, by the definition of $f$.
\end{proof}
Next we have
\begin{lemma}\label{l5b}
Let $\tilde x_1,..,\tilde x_{n-1}$ be the maximum points of the polynomial $f(x)$ in $(-a_n,a_n)$.
There exists $b_1>0$ such that the superlevel set $\omega_b$ does not intersect the lines $x=\tilde x_i$ for $i=1,..,n-1$ for every $0<b<b_1$. 
\end{lemma}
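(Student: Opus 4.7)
The plan is to show that at each maximum point $\tilde x_i\in(-a_n,a_n)$ of $f$, the auxiliary polynomial $v$ is strictly positive in a whole vertical segment $\{\tilde x_i\}\times[-\delta_0,\delta_0]$, and then to exploit the fact that as $b\to 0$ the domain $\Omega_b$ shrinks to the horizontal segment so that $\Omega_b$ is eventually contained in $\{|y|<\delta_0\}$. On this thin strip one checks that the ``unperturbed'' part of $u_b$ already satisfies $\psi_b\leq b$ with equality only on $\gamma$, so the sign of $v$ will force $u_b<b$ along the vertical line $x=\tilde x_i$.

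More precisely, first I would use the construction of $v$ in \eqref{max10b}: since $v(x,0)=f(x)$ and the interior critical points $\tilde x_1,\dots,\tilde x_{n-1}$ of $f$ in $(-a_n,a_n)$ are strict local maxima at which $f$ is strictly positive, we have $v(\tilde x_i,0)=f(\tilde x_i)>0$ for every $i=1,\dots,n-1$. Since $v$ is continuous (in fact a polynomial), there exists $\delta_0>0$ such that
\[
v(\tilde x_i,y)>0\qquad\text{for all }|y|\leq\delta_0\text{ and all }i=1,\dots,n-1.
\]

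Next, by Lemma \ref{l2b}, $\overline{\Omega_b}\to\{(x,0):-\bar x_\eta\leq x\leq\bar x_\eta\}$ as $b\to 0$, so there exists $b_1>0$ such that for every $0<b<b_1$ one has $\Omega_b\subset(-\bar x_\eta,\bar x_\eta)\times(-\delta_0,\delta_0)$. Now take any point of the form $(\tilde x_i,y)\in\Omega_b$. Then $|y|<\delta_0$, so $v(\tilde x_i,y)>0$, and from the explicit expression \eqref{mam-f} of $u_b$ one has, in both the spherical and the hyperbolic case,
\[
u_b(\tilde x_i,y)-b=\psi_b(\tilde x_i,y)-b-b\eta\,v(\tilde x_i,y)\leq -b\eta\,v(\tilde x_i,y)<0,
\]
because $\psi_b-b$ equals $\frac12\log\bigl(1-\tfrac{4y^2}{(1+x^2+y^2)^2}\bigr)$ on $\mathbb{S}^2$ and $-\frac12\log\bigl(1+\tfrac{4y^2}{(1-x^2-y^2)^2}\bigr)$ on $\mathbb{H}^2$, both of which are non-positive (and vanish only for $y=0$). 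Thus $u_b(\tilde x_i,y)<b$, which means $(\tilde x_i,y)\notin\omega_b$, and the lines $x=\tilde x_i$ do not meet $\omega_b$ for $0<b<b_1$.

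This argument is very short because the difficult work has been done already in Lemma \ref{l2b} (the localization of $\Omega_b$ to an arbitrarily thin horizontal strip). The only point worth double-checking is the sign of $f$ at its interior critical points, which follows directly from the structure imposed in \eqref{max10}: between two consecutive zeros $\pm a_k$, $\pm a_{k+1}$ of $f$ the values of $f$ are of definite sign and the maxima $\tilde x_j$ lie exactly in the ``positive'' intervals, so $f(\tilde x_j)>0$.
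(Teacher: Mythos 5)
Your proof is correct, and the key inequality $u_b-b=(\psi_b-b)-b\eta\,v\le -b\eta\,v<0$ at points of $\Omega_b$ on the line $x=\tilde x_i$ is exactly the mechanism the paper uses. The difference is in how the vertical thinness of $\Omega_b$ is established: the paper invokes Lemma \ref{ma11}, which gives the quantitative rate $y_b=O(\sqrt b)$, and then Taylor-expands $v(\tilde x_i,y)=f(\tilde x_i)+O(b)$ before applying the definition of $\eta$; you instead cite the coarser convergence statement \eqref{ma3} from Lemma \ref{l2b}, pick $\delta_0$ small so that $v(\tilde x_i,\cdot)>0$ on $[-\delta_0,\delta_0]$ by continuity, and observe that $\Omega_b$ eventually fits inside $(-\bar x_\eta,\bar x_\eta)\times(-\delta_0,\delta_0)$. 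Your route is a bit cleaner: it needs neither the $O(\sqrt b)$ expansion nor the precise value of $\eta$, only $f(\tilde x_i)>0$ (which, as you correctly note, is built into the definition of $f$). Incidentally, the paper's last displayed inequality reads ``$<0$'' where it should read ``$<b$'' (one only needs to exclude the line from $\omega_b=\{u_b>b\}$, not from $\Omega_b$); your argument sidesteps this issue since you work directly with $u_b-b$ and in fact prove the stronger statement $u_b<b$ on the whole line inside $\Omega_b$.
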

\begin{proof}
We compute 
\[
u_b(\tilde x_i,y)\leq b\left(1-\eta v(\tilde x_i,y)\right).
\]
By Lemma \ref{ma11} if $(\tilde x_i,y)\in \Omega_b$ then $y\to 0$ as $b\to 0$.  Then $v(\tilde x_i,y)=v(\tilde x_i,0)+O(|y|^2)=f(\tilde x_i)+O(b)$ since $v$ is even in $y$ and $y_b=O(\sqrt b)$,
 and so
\[u_b(\tilde x_i,y)\leq b\left(1-\eta f(\tilde x_i)\right)(1+o(1))<b\left(1-\frac \eta 2f(\tilde x_i)\right)<0\]
if $b$ is small enough, by the definition of $\eta$. This gives the claim.
\end{proof}
\begin{corollary}\label{maxc}
The set $\omega_{b}$ admits at least $n$ connected components if $b$ is small enough.
\end{corollary}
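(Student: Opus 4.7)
The strategy is purely topological: combine Lemma \ref{l4b}, which locates $n$ distinguished points of $\omega_b$ on the $x$-axis, with Lemma \ref{l5b}, which produces $n-1$ vertical barriers that $\omega_b$ cannot cross.

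First I would recall that, since $f$ is a real polynomial whose $2n$ roots $\pm a_1,\dots,\pm a_n$ are simple and contained in $(-a_n,a_n)\cup\{-a_n,a_n\}$, the critical points of $f$ in the open interval $(-a_n,a_n)$ alternate between $n$ strict minima and $n-1$ strict maxima. After relabelling in increasing order, this gives the interlacing
\[
x_1<\tilde x_1<x_2<\tilde x_2<\dots<\tilde x_{n-1}<x_n,
\]
with $x_i$ the minima and $\tilde x_j$ the maxima of $f|_{(-a_n,a_n)}$.

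Next, I fix $b<b_1$, where $b_1$ is the threshold from Lemma \ref{l5b}. Consider the $n-1$ vertical lines $\ell_j:=\{x=\tilde x_j\}$ for $j=1,\dots,n-1$. By Lemma \ref{l5b}, $\omega_b\cap\ell_j=\emptyset$ for every such $j$. Consequently, removing these lines from the ambient plane separates $\omega_b$ into pieces that lie in the $n$ open vertical strips
\[
S_1=\{x<\tilde x_1\},\quad S_i=\{\tilde x_{i-1}<x<\tilde x_i\}\ (2\le i\le n-1),\quad S_n=\{x>\tilde x_{n-1}\}.
\]
Since any continuous path in $\omega_b$ avoids each $\ell_j$, it stays in a single $S_i$, so every connected component of $\omega_b$ is contained in exactly one strip $S_i$.

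Finally, by Lemma \ref{l4b} the point $(x_i,0)$ belongs to $\omega_b$, and by the interlacing above $(x_i,0)\in S_i$. Hence the connected components of $\omega_b$ containing the points $(x_1,0),\dots,(x_n,0)$ lie in pairwise distinct strips and therefore are pairwise distinct. This produces at least $n$ connected components of $\omega_b$, concluding the proof. There is no serious obstacle here; all the analytic work has already been done in the two preceding lemmas, and the present statement is essentially a combinatorial/topological consequence of the interlacing of extrema of $f$.
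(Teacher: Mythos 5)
Your proof is correct and takes essentially the same approach as the paper: combine Lemma~\ref{l4b} (the $n$ points $(x_i,0)$ lie in $\omega_b$) with Lemma~\ref{l5b} (the lines $x=\tilde x_j$ are barriers) and conclude by a separation argument. You merely spell out the interlacing of the extrema of $f$ and the strip decomposition more explicitly than the paper does.
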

\begin{proof}
By Lemma \ref{l4b} we have that the $n$ points $(x_i,0)$ (for $i=1,..,n$),  belong to $\omega_{b}$. On the other hand they belong to different connected components since the straight line $x=\tilde x_i$ does not belong to $\omega_{b}$ for $i=1,..,n-1$.  This ends the proof.
\end{proof}

In next proposition we prove the the domain $\Omega_b$ is starshaped with respect to the origin for $b$ small enough. The star-shapedness   of $\Omega_b$  with respect to the origin in the usual (Eucliean) sense, is equivalent to the geodesic starshapedness of $\Omega_b$ as a spherical or hyperbolic domain. In fact, in the two models that we are considering, straight lines through the origin are geodesics.
 \begin{proposition}\label{maxp}
The set $\Omega_b$ is smooth and starshaped with respect to the origin if $b$ is small enough.
\end{proposition}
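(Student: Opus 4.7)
The plan is to prove geodesic star-shapedness of $\Omega_b$ with respect to the origin, which in both ambient models (stereographic projection of $\mathbb S^2$ from the south pole and Poincaré disk of $\mathbb H^2$) reduces to ordinary Euclidean star-shapedness about the origin, since Euclidean straight lines through the origin correspond to geodesics (great circles through the north pole in the spherical case, diameters of the disk in the hyperbolic case). Smoothness of $\Omega_b$ follows from the implicit function theorem once I verify that $\nabla u_b\ne 0$ on $\partial\Omega_b$, while star-shapedness, since the outward Euclidean normal is $\nu=-\nabla u_b/|\nabla u_b|$, is equivalent to
\[
\langle \nabla u_b(x,y),(x,y)\rangle<0 \qquad \text{on}\ \partial\Omega_b.
\]

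First I would verify $\nabla u_b\ne 0$ on $\partial\Omega_b$. By Lemma \ref{l2b}, the boundary meets $\{y=0\}$ only at $(\pm\bar x_\eta,0)$; there $u_{b,y}=0$ by the $y$-parity of $u_b$, while $u_{b,x}(\pm\bar x_\eta,0)=\mp b\eta f'(\bar x_\eta)\ne 0$. At a boundary point $(x_b,y_b)$ with $y_b\ne 0$, direct differentiation of \eqref{psib} yields in the spherical case
\[
\psi_{b,y}(x,y)=-\frac{4y(1+x^2-y^2)}{(1+\rho^2)\bigl((1+\rho^2)^2-4y^2\bigr)},
\]
which is of order $|y_b|\sim\sqrt b$ by Lemma \ref{ma11}, while $b\eta v_y=O(by_b)=O(b^{3/2})$ since $v$ is even in $y$; hence $u_{b,y}\ne 0$. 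The hyperbolic case is analogous.

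For the key inequality, an elementary simplification starting from \eqref{psib} yields
\[
\langle\nabla\psi_b,(x,y)\rangle=\frac{4y^2(\rho^2-1)}{(1+\rho^2)\bigl((1+\rho^2)^2-4y^2\bigr)}\quad(\mathbb S^2),
\]
and the analogous expression $\frac{-4y^2(1+\rho^2)}{(1-\rho^2)((1-\rho^2)^2+4y^2)}$ in $\mathbb H^2$; both are strictly negative inside $\Omega_b\cap\{y\ne 0\}$. Substituting $y_b^2\sim\tfrac{b}{2}(1-\eta f(x))(1\pm x^2)^2$ from Lemma \ref{ma11} and Taylor-expanding in $b$, in the spherical setting I obtain
\[
\langle\nabla u_b,(x,y)\rangle = b\,g(x) + O(b^2),\qquad g(x):=\frac{2(1-\eta f(x))(x^2-1)}{1+x^2}-\eta xf'(x),
\]
and similarly for $\mathbb H^2$. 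The coefficient $g$ is strictly negative on $[-\bar x_\eta,\bar x_\eta]$: for $a_n\le|x|\le\bar x_\eta$ the first summand is $\le 0$ and, by \eqref{ma32}, $-\eta xf'(x)\le 0$, with $g(\pm\bar x_\eta)=-\eta\bar x_\eta f'(\bar x_\eta)<0$; for $|x|<a_n$ the choice \eqref{eta} of $\eta$ is tailored through \eqref{fg} precisely so that this combination is negative.

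The main obstacle is that the expansion above is not uniform near the endpoints $(\pm\bar x_\eta,0)$, where Lemma \ref{ma11} gives $y_b^2\sim b(\bar x_\eta-|x_b|)$ rather than $y_b^2\sim b$, so the $O(b)$ contribution of $\langle\nabla\psi_b,(x,y)\rangle$ vanishes in the limit. In this regime, however, the perturbative term
\[
-b\eta(xv_x+yv_y)\ \longrightarrow\ -b\eta\bar x_\eta f'(\bar x_\eta)<0
\]
takes over as the dominant strictly negative contribution. Splitting $\partial\Omega_b$ into a small neighborhood of the two endpoints (where this perturbation controls the sign) and its complement (where the uniform bound $b\,g(x)\le -c b$ applies by compactness), I obtain $\langle \nabla u_b,(x,y)\rangle\le -cb<0$ on all of $\partial\Omega_b$ for $b$ small, completing the proof.
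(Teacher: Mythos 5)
Your proof is correct and follows the same overall strategy as the paper: reduce geodesic star-shapedness to the Euclidean condition $\langle\nabla u_b,(x,y)\rangle_E<0$ on $\partial\Omega_b$, expand to leading order in $b$ via Lemma~\ref{ma11}, and reduce the sign to the same inequality $-2(1-\eta f(\bar x))(1-\bar x^2)-\eta\bar x f'(\bar x)(1+\bar x^2)<0$ (your $g(\bar x)(1+\bar x^2)$), which is then verified by the same case split $|\bar x|\le a_n$ versus $a_n\le|\bar x|\le\bar x_\eta$ using \eqref{eta}, \eqref{fg} and \eqref{ma32}. The one genuine streamlining is that you compute the radial derivative $\langle\nabla\psi_b,(x,y)\rangle$ in closed form, observing directly that it is nonpositive, rather than expanding $\partial_x\psi_b$ and $\partial_y\psi_b$ separately as the paper does (\eqref{derx}--\eqref{dery}); this sidesteps the explicit $|\nabla u_b|$ asymptotics \eqref{ma54}, which the paper needs anyway only to record the constant $\alpha_b$ used later in Lemma~\ref{sharp}. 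Your treatment of the endpoint degeneracy at $(\pm\bar x_\eta,0)$, where $y_b^2=o(b)$ and the perturbative term $-b\eta\bar x_\eta f'(\bar x_\eta)$ takes over, is correct and matches the paper's handling.
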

\begin{proof}
We want to show that
\beq \label{ma53}
\langle\nu_E,(x,y)\rangle_E\ge\alpha _b>0\eeq
for some $\alpha_b>0$,  for every $(x,y)\in \partial \Omega_b$,
where $\langle\cdot,\cdot\rangle_E$ is the standard scalar product of $\mathbb R^2$
and $\nu_E(x,y)$ is the outher normal to $\partial \Omega_b$ at the point $(x,y)$,
when $b$ is small enough. In particular, by \eqref{3.1} we will show that 
\[\langle\frac{\nabla u_b}{|\nabla u_b|}(x,y),(x,y)\rangle_E\le-\alpha_b <0\]
for every $(x,y)\in \partial \Omega_b$,  when $b$ is small enough. This will show also that $\Omega_b$ is smooth.\\
Let us consider the spherical case first.  
Some computations give that
\[\frac{\partial u_b(x,y)}{\partial x}=\frac{8xy^2}{(1+x^2+y^2)\left[(1+x^2+y^2)^2-4y^2\right]}-b\eta \frac{\partial v (x,y)}{\partial x}
\]
\[\frac{\partial u_b(x,y)}{\partial y}=\frac {-4y(1+x^2-y^2)}{(1+x^2+y^2)\left[(1+x^2+y^2)^2-4y^2\right]}-b\eta \frac {\partial v(x,y)}{\partial y}.
\]
Let $(x_b,y_b)$ be a point on $\partial \Omega_b$. Recall that $y_b\to 0$ as $b\to 0$ (see Lemma \ref{ma11}) and we assume that 
$$x_b\to \bar x\in [-\bar x_\eta,\bar x_\eta].$$
Since $\Omega_b$ is symmetric with respect to $x=0$ and $y=0$ we consider hereafter only the case of $x\geq 0$ and $y\geq 0$.  Next we have
\[
\begin{split}
\frac{\partial v (x_b,y_b)}{\partial x}&=\frac{\partial v (\bar x,0)}{\partial x}+\frac{\partial^2 v (\bar x,0)}{\partial x^2}(x_b-\bar x)+\frac{\partial^2 v (\bar x,0)}{\partial x\partial y}y_b+O(y_b^2,(x_b-\bar x)^2)\\
 &=f'(\bar x)+O(y_b^2,|x_b-\bar x|)
\end{split}
\]
and
\[
\begin{split}
\frac{\partial v (x_b,y_b)}{\partial y}&=\frac{\partial v (\bar x,0)}{\partial y}+\frac{\partial^2 v (\bar x,0)}{\partial x\partial y}(x_b-\bar x)+\frac{\partial^2 v (\bar x,0)}{\partial y^2}y_b+O(y_b^2,(x_b-\bar x)^2)\\
&=\frac{\partial^2 v (\bar x,0)}{\partial y^2}y_b+O(y_b^2,(x_b-\bar x)^2)
\end{split}
\]
by the symmetry of $v(x,y)$. Moreover
\[
\frac{8x_by_b^2}{(1+x_b^2+y_b^2)\left[(1+x_b^2+y_b^2)^2-4y_b^2\right]}=\frac{8\bar xy_b^2}{(1+\bar x^2)^3}(1+O(|x_b-\bar x|,y_b^2))
\]
and
\[
\frac {-4y_b(1+x_b^2-y_b^2)}{(1+x_b^2+y_b^2)\left[(1+x_b^2+y_b^2)^2-4y_b^2\right]}=-4\frac{y_b}{(1+\bar x^2)^2}(1+O(|x_b-\bar x|,y_b^2)).
\]
Using \eqref{f66} we get the expansion of the first derivative of $u_b$ in $(x_b,y_b)$, 
{\small
\beq \label{derx}
\frac{\partial u_b(x_b,y_b)}{\partial x}=b\left(\frac{4\bar x}{(1+\bar x^2)}(1-\eta f(\bar x)
)-\eta f'(\bar x))
\right)(1+O(y_b^2,|x_b-\bar x|))
\eeq}
and
\beq\label{dery}
\begin{split}
\frac{\partial u_b(x_b,y_b)}{\partial y}&=\left[-4\frac{\sqrt{\frac b 2 (1-\eta f(\bar x)-\eta f'(\bar x)(x_b-\bar x))}}{(1+\bar x^2)}-\underbrace{b\eta \frac{\partial^2 v (\bar x,0)}{\partial y^2}y_b}_{=O( b y_b)}\right](1+O(|x_b-\bar x|,y_b^2))\\
&=-4\frac{\sqrt{\frac b 2\big(1-\eta f(\bar x)-\eta f'(\bar x)(x_b-\bar x)\big)}}{(1+\bar x^2)}(1+O(|x_b-\bar x|,\sqrt b y_b))
\end{split}
\eeq
So we have
\begin{multline}\label{grad}
|\nabla u_b(x_b,y_b)|^2\\=b\left(8\frac{ (1-\eta f(\bar x)-\eta f'(\bar x)(x_b-\bar x))}{(1+\bar x^2)^2}+b\left(\frac{4\bar x}{(1+\bar x^2)}(1-\eta f(\bar x)
)-\eta f'(\bar x))\right)^2
\right)(1+o(1)).
\end{multline}
Now if $\bar x\neq \bar x_\eta$ we have that $ 1-\eta f(\bar x)\neq 0$ and  \eqref{grad} gives
\beq\label{ma52}
|\nabla u_b(x_b,y_b)|=2\sqrt 2\sqrt b\frac{\sqrt {1-\eta f(\bar x)}}{ (1+\bar x^2)}(1+o(1)).
\eeq
On the other hand if $\bar x=\bar x_\eta$ we get  $ 1-\eta f(\bar x_\eta)= 0$ and $f'(\bar x_\eta )\neq 0$. So \eqref{grad} becomes
\beq\label{ma54}
|\nabla u_b(x_b,y_b)|=\begin{cases}
2\sqrt 2\sqrt b\frac{\sqrt {\eta f'(\bar x_\eta)(\bar x_\eta-x_b)}}{ 1+\bar x_\eta^2}(1+o(1)) & \hbox{ when } \frac{|\bar x_\eta-x_b|}{b}\to \infty\\
 b 2\sqrt 2 \frac{\sqrt{\eta f'(\bar x_\eta)(\gamma+2\bar x_\eta^2 \eta f'(\bar x_\eta))
}}{ 1+\bar x_\eta^2}
(1+o(1))
& \hbox{ when } \frac{|\bar x_\eta-x_b|}{b}\to \gamma>0\\
b\frac{4\bar x_\eta}{1+\bar x_\eta^2}\eta f'(\bar x_\eta)(1+o(1))& \hbox{ when } \frac{|\bar x_\eta-x_b|}{b}\to 0.
\end{cases}
\eeq
Let us compute
{\small
\beq \label{ma55}
\begin{split}
\langle\frac{\nabla u_b(x,y)}{|\nabla u_b(x,y)|},(x,y)\rangle_E&=\frac b{|\nabla u_b(x,y)|}\left(\frac{4\bar x^2}{(1+\bar x^2)}(1-\eta f(\bar x)
)-\eta \bar x f'(\bar x)-2( 1-\eta f(\bar x))
\right)(1+O(y_b^2,|x_b-\bar x|))\\
&=\frac b {|\nabla u_b(x,y)|}\frac{2(1-\eta f(\bar x))(\bar x^2-1)-\eta (1+\bar x^2)\bar x f'(\bar x)}{(1+\bar x^2)}(1+o(1))
\end{split}
\eeq}
so that the proof of the starshapedness of $\Omega_b$ for $b$ small, is reduced to the proof of 
\beq\label{ma30}
-2(1-\eta f(\bar x))(1-\bar x^2)-\eta \bar xf'(\bar x)(1+\bar x^2)\leq -\alpha<0
\eeq
for every $\bar x\in[0,\bar x_\eta[$ for some positive number $\alpha$.

We consider two different cases: in the first one we will prove \eqref{ma30} for $0\le x < a_n$, in the second one for $x \ge a_n$.
\vskip0.2cm 
{\bf Case  $0\leq \bar x\le a_n$.}\\
By \eqref{eta} we know that
\[
\frac 1{\eta}>\sup_{(-a_n,a_n)}\left[f(x)-\frac {x(1+x^2)}{2(1-x^2)}f'(x)\right]
\]
it is then easy to see that
\[
\sup_{(0,a_n)}\Big[-2(1-x^2)+\eta\left( 2 f(x)(1-x^2) +xf'(x)(1+x^2)\right)\Big]<0.
\]
This proves \eqref{ma30} when $\bar x\in [0,a_n]$.
\vskip0.2cm
{\bf Case  $a_n\le \bar x\le \bar x_\eta$.}\\
We already know that $f( x)\le f( x_\eta)=\frac1\eta$ which gives that $1-\eta f(x)\ge0$ for $x\in [a_n,\bar x_\eta]$.  Moreover $f'(x)>0$ for $x>a_n$ which gives that 
\[\sup_{(a_n,\bar x_\eta)} 
-2(1-\eta f(x))(1-x^2)-\eta xf'(x)(1+x^2)\leq \sup_{(a_n,\bar x_\eta)}  -\eta xf'(x)(1+x^2)<0
\]
and this proves \eqref{ma30}.  So \eqref{ma53} follows by \eqref{ma54}, \eqref{ma55} and \eqref{ma30}. Although it is not essential in the proof, we write the expansion of $\alpha_b$ because it will be used in the computation of the curvature, 
\beq\label{ma57}
\alpha_b=
\begin{cases}
\frac {\alpha\sqrt b}{2\sqrt 2 \sqrt{1-\eta f(\bar x)}}
& \hbox{ when } x_b\to \bar x\neq \bar x_\eta\\
\frac{\alpha \sqrt b}{2\sqrt 2 \sqrt {\eta f'(\bar x_\eta)(\bar x_\eta-x_b)}} & \hbox{ when } x_b\to \bar x_\eta,  \frac{|\bar x_\eta-x_b|}{b}\to \infty\\
\frac \alpha{2 \sqrt 2 \sqrt{\eta f'(\bar x_\eta)(\gamma+2\bar x_\eta^2 \eta f'(\bar x_\eta))
}}
& \hbox{ when } \frac{|\bar x_\eta-x_b|}{b}\to \gamma>0\\
\frac{\alpha}{4\bar x_\eta \eta f'(\bar x_\eta)}& \hbox{ when } \frac{|\bar x_\eta-x_b|}{b}\to 0.
\end{cases}
\eeq

In the hyperbolic case a very similar computation gives
{\small
\[\frac{\partial u_b(x_b,y_b)}{\partial x}=b\left(\frac{-4\bar x}{1-\bar x^2}(1-\eta f(\bar x)
)-\eta f'(\bar x))
\right)(1+O(y_b^2,|x_b-\bar x|))
\]}
and
\[\frac{\partial u_b(x_b,y_b)}{\partial y}=\left[-4\frac{\sqrt{\frac b 2 (1-\eta f(\bar x)-\eta f'(\bar x)(x_b-\bar x))}}{1-\bar x^2}-\underbrace{b\eta \frac{\partial^2 v (\bar x,0)}{\partial y^2}y_b}_{=O(by_b)}\right](1+O(|x_b-\bar x|,y_b^2)).
\]
This gives the expansion of $|\nabla u_b(x_b,y_b)|$ 
\beq\label{ma54b}
|\nabla u_b(x_b,y_b)|=\begin{cases}
\sqrt b2\sqrt 2\frac{\sqrt {1-\eta f(\bar x)}}{ 1-\bar x^2}(1+o(1))& \hbox{ when } x_b\to \bar x\ne x_\eta\\
\sqrt b2\sqrt 2\frac{\sqrt {\eta f'(\bar x_\eta)(\bar x_\eta-x_b)}}{ 1-\bar x_\eta^2}(1+o(1)) & \hbox{ when } \frac{|\bar x_\eta-x_b|}{b}\to \infty\\
 b 2\sqrt 2 \frac{\sqrt{\eta f'(\bar x_\eta)(\gamma+2\bar x_\eta^2 \eta f'(\bar x_\eta))
}}{ 1-\bar x_\eta^2}
(1+o(1))
& \hbox{ when } \frac{|\bar x_\eta-x_b|}{b}\to \gamma>0\\
b\frac{4\bar x_\eta}{1-\bar x_\eta^2}\eta f'(\bar x_\eta)(1+o(1))& \hbox{ when } \frac{|\bar x_\eta-x_b|}{b}\to 0.
\end{cases}
\eeq
Then
{\small
\begin{equation}\label{ma73}
\begin{split}
\langle\nabla u_b(x,y),(x,y)\rangle_E&=b\frac{-2(1-\eta f(\bar x))(\bar x^2+1)-\eta (1-\bar x^2)\bar x f'(\bar x)}{1-\bar x^2}+o(b)
\end{split}
\end{equation}}
and letting $-\alpha=\sup_{(0,a_n)}-2(1-\eta f(\bar x))(\bar x^2+1)-\eta (1-\bar x^2)\bar x f'(\bar x)<0$ we get, for some positive constant $\bar\alpha>0$,
\begin{equation}
\langle\nu_E,(x,y)\rangle_E<-\bar\alpha\sqrt b+o(\sqrt b)\quad\hbox{if  }x_b\to \bar x\ne x_\eta
\end{equation}
and, using \eqref{ma57}
\beq\label{ma57b}
\langle\nu_E,(x,y)\rangle_E=\alpha_b\le0\quad\hbox{if  }x_b\to x_\eta.
\eeq
\end{proof}
We conclude this section showing that \eqref{cla} and \eqref{cla1} hold. It is convenient to look at the {\it stereographic projection} of a star-shaped domain with respect to the south pole $-P$, and translate condition \eqref{assumption} into a condition for the image domain in $\mathbb R^2$.

Note that since $\Omega$ is star-shaped with respect to $P$, $-P\notin\Omega$. We consider the stereographic projection $\Psi:\mathbb S^2\to\mathbb R^2$ from the south pole $-P$, which maps a point $(x,y,z)\in\mathbb S^2\subset\mathbb R^3$ to the point $(X,Y)=\left(\frac{x}{1+z},\frac{y}{1+z}\right)$ in $\mathbb R^2$. Let $\Omega_E:=\Psi(\Omega)\subset\mathbb R^2$. Now, $\Omega$ is uniformly star-shaped with respect to $P$, if and only if $\Omega_E$ is uniformly star-shaped with respect to $0$.

Let $\kappa_E$ be the geodesic curvature of $\partial\Omega_E$ for the Euclidean metric with respect to the unit outer normal $\nu_E$. Since the Euclidean metric is conformal to the metric of $\mathbb S^2$ in stereographic projection, namely, $\frac{4}{(1+X^2+Y^2)^2}(dX^2+dY^2)$, we have that
$$
\kappa=\frac{1+X^2+Y^2}{2}\kappa_E-\langle\nu_E,(X,Y)\rangle_E,
$$
where $\langle\cdot,\cdot\rangle_E$ is the standard scalar product of $\mathbb R^2$. Using this, we get that
\begin{equation}\label{assumption_stereo}
\cos(\theta)\kappa+\sin(\theta)\langle\nu,\vec{e_{\theta}}\rangle=\frac{1-X^2-Y^2}{2}\kappa_E+\langle\nu_E,(X,Y)\rangle_E.
\end{equation}
\begin{lemma}\label{sharp}
For $b$ small enough we have that
\begin{equation}\label{m91-f}
\begin{split}
&\lim\limits_{b\to0}(\cos(\theta)k_{\Omega_b}+\sin(\theta)\langle\nu,\vec{e_{\theta}}\rangle)\ge0\,\ \ \hbox{ on }\partial\Omega_b,\subset \mathbb S^2\\
&\lim\limits_{b\to0}(\cosh(r)k_{\Omega_b}-\sinh(r)\langle\nu,\vec{e_r}\rangle)\ge0\,\ \ \hbox{ on }\partial\Omega_b\subset \mathbb H^2,
\end{split}
\end{equation}
where $\theta={\rm dist}(\cdot,0)$ is the geodesic distance from the origin (i.e., from the north pole) in spherical coordinates and $r={\rm dist}(\cdot,0)$ is the geodesic distance from the origin in $ \mathbb H^2$.
\end{lemma}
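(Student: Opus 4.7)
The key idea is to pass, via the conformal representation of each model, from the intrinsic geometric quantities to Euclidean ones, where the explicit expansions of $u_b$ and $\partial\Omega_b$ already available in Section \ref{s7} can be used directly. For the spherical case the identity
\begin{equation*}
\cos(\theta)\kappa+\sin(\theta)\langle\nu,\vec{e_\theta}\rangle = \tfrac{1-X^2-Y^2}{2}\kappa_E+\langle\nu_E,(X,Y)\rangle_E
\end{equation*}
(with $\kappa_E,\nu_E$ denoting the Euclidean curvature and outer normal of $\partial\Omega_b$) is exactly \eqref{assumption_stereo} in the stereographic model; in the Poincar\'e disk model the analogous formula
\begin{equation*}
\cosh(r)\kappa-\sinh(r)\langle\nu,\vec{e_r}\rangle = \tfrac{1+x^2+y^2}{2}\kappa_E-\langle\nu_E,(x,y)\rangle_E
\end{equation*}
is contained in \eqref{equiv2}. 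It thus suffices to show that the right-hand sides have non-negative limits on $\partial\Omega_b$ as $b\to 0$.

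Fix a sequence $(x_b,y_b)\in\partial\Omega_b$; up to a subsequence, $x_b\to\bar x\in[-\bar x_\eta,\bar x_\eta]$ and Lemma \ref{ma11} gives $y_b\to 0$ with $y_b=O(\sqrt b)$. The scalar product $\langle\nu_E,(x_b,y_b)\rangle_E$ is already computed in the proof of Proposition \ref{maxp}: it equals $\alpha_b(1+o(1))$ with $\alpha_b$ given by \eqref{ma57}, which is $O(\sqrt b)$ for $\bar x\in(-\bar x_\eta,\bar x_\eta)$. For the Euclidean curvature I would apply the level-set formula
$$\kappa_E=|\nabla u_b|^{-3}\bigl(2u_{b,x}u_{b,y}u_{b,xy}-u_{b,y}^2u_{b,xx}-u_{b,x}^2u_{b,yy}\bigr)$$
together with the boundary expansions of the derivatives of $u_b$ already used in Subsection \ref{s7}, namely $u_{b,yy}=O(1)$, $u_{b,xx}=O(b)$, $u_{b,x}=O(b)$, $u_{b,xy}=O(\sqrt b)$, $u_{b,y}=O(\sqrt b)$ (all with explicit leading coefficients coming from $\psi_b$ and the harmonicity of $v$). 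A direct substitution yields $\kappa_E=O(\sqrt b)$ at interior points, so $\tfrac{1-x_b^2-y_b^2}{2}\kappa_E\to 0$; both summands vanish in the limit and the inequality holds with equality for $\bar x\in(-\bar x_\eta,\bar x_\eta)$.

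The main delicate point, which I expect to require most care, is the endpoint regime $\bar x=\pm\bar x_\eta$, where $1-\eta f(\bar x)=0$, the scaling $y_b^2\sim b(\bar x_\eta-x_b)$ replaces $y_b^2\sim b$, and $\partial\Omega_b$ ceases to be a graph over the $x$-axis. There both $\langle\nu_E,(x_b,y_b)\rangle_E$ and $\kappa_E$ may attain strictly positive limits depending on the sub-scale regime of $(x_b-\bar x_\eta)/b$ already catalogued in \eqref{ma57}; in each sub-case the two summands have to be computed separately to leading order and combined, and their non-negativity should ultimately reflect the star-shapedness estimate \eqref{ma30}. The conceptual reason behind the result is that the unperturbed strip $\mathcal S_b$ has as boundary an equidistant of the geodesic $\gamma$, on which $\cos(\theta)\kappa+\sin(\theta)\langle\nu,\vec{e_\theta}\rangle$ vanishes identically (cf.\ the remark preceding Subsection \ref{s7}); the harmonic perturbation $-b\eta v$ produces only corrections that vanish in the limit. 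The hyperbolic case is completely analogous, with $(1+x^2)$ replaced by $(1-x^2)$ throughout and \eqref{equiv2} playing the role of \eqref{assumption_stereo}.
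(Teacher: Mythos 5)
Your overall strategy matches the paper's: pass to Euclidean quantities via the conformal identities \eqref{assumption_stereo} and \eqref{equiv2}, then feed in the boundary expansions from Lemma \ref{ma11} and Proposition \ref{maxp}. Your treatment of the interior regime $\bar x\in(-\bar x_\eta,\bar x_\eta)$ is correct and agrees with the paper: both $\kappa_E$ and $\langle\nu_E,(x_b,y_b)\rangle_E$ are $O(\sqrt b)$, so the limit is $0$.

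The gap is the endpoint regime $\bar x=\pm\bar x_\eta$, which you flag as ``the main delicate point'' but do not actually carry out, and your guess at the mechanism is not what makes it work. You suggest that non-negativity there ``should ultimately reflect the star-shapedness estimate \eqref{ma30},'' but that estimate only controls the sign and size of $\langle\nu_E,(x_b,y_b)\rangle_E$; it does not by itself close the argument. What the paper actually shows at the endpoints is a sharp asymptotic for the Euclidean curvature. Inserting the second-derivative expansions into the level-set formula and using that $1-\eta f(\bar x_\eta)=0$ with $f'(\bar x_\eta)\neq 0$, the numerator of $\kappa_E$ collapses to $-4b^2\eta^2 f'(\bar x_\eta)^2/(1\pm\bar x_\eta^2)^2\,(1+o(1))$, so $\kappa_E>0$ and in fact blows up; depending on whether $|x_b-\bar x_\eta|/b\to\infty$ or $\to\gamma\ge0$ one gets $\kappa_E\sim C\sqrt b/|x_b-\bar x_\eta|^{3/2}$ or $\kappa_E\sim C/b$. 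In the spherical case this is enough because the radial term enters with a $+$ sign and is itself non-negative. In the hyperbolic case the radial term enters with a $-$ sign, so positivity is genuinely in doubt and must be rescued by comparing growth rates: $\langle\nu_E,(x_b,y_b)\rangle_E$ is only $O(\sqrt{b/|x_b-\bar x_\eta|})$ or $O(1)$ in the respective sub-regimes, hence strictly dominated by $\kappa_E$. This regime-by-regime comparison, absent from your write-up, is exactly the non-trivial part of the proof, and without it the hyperbolic inequality in \eqref{m91-f} does not follow.

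A secondary imprecision: you describe $\langle\nu_E,(x_b,y_b)\rangle_E$ as ``$\alpha_b(1+o(1))$,'' but \eqref{ma53} is a one-sided bound $\ge\alpha_b$, and what is really used is the exact leading-order expression from \eqref{ma55}/\eqref{ma73} together with the gradient asymptotics \eqref{ma52}, \eqref{ma54}, \eqref{ma54b}. This matters precisely at the endpoints, where the size (and not just the sign) of this term has to be tracked against $\kappa_E$.
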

\begin{proof}
In the proof of Proposition \ref{maxp} we have shown that $\langle\nu_E,(x,y)\rangle_E\ge\alpha _b>0$, see \eqref{ma53}. So it will be enough to show that $\lim_{b\to 0} \cos(\theta)k_{\Om_b}\geq 0$ for every $(x,y)\in \partial \Omega_b$.

{\bf Case 1, $\Omega_b\subset \mathbb S^2$}

Using stereographic coordinates we have,
\begin{equation}\label{mcur0}
\begin{split}
&\cos(\theta)k_{\Om_b}+\sin(\theta)\langle\nu,\vec{e_{\theta}}\rangle=\frac{1-x^2-y^2}2k_{\Om_b,E}+\langle\nu_E,(x,y)\rangle_E
\end{split}
\end{equation}
where $k_{\Om_b,E}$ is the euclidean curvature of $\partial \Omega_b$ and $\langle\nu_E,(x,y)\rangle_E$ has been computed in the proof of Proposition \ref{maxp}, see \eqref{ma53} with $\alpha_b$ as in \eqref{ma57}. 
Recalling that 
\[k_{\Om_b,E}=-\frac{\frac{\partial^2 u_b}{\partial x^2}\left(\frac{\partial u_b}{\partial y}\right)^2-2\frac{\partial^2 u_b}{\partial x\partial y}\frac{\partial u_b}{\partial x}\frac{\partial u_b}{\partial y}+\frac{\partial^2 u_b}{\partial y^2}\left(\frac{\partial u_b}{\partial x}\right)^2}{|\nabla u_b|^3}
\]
we expand the second derivatives of $u_b$ by using \eqref{ma6}. Let $(x_b,y_b)\in \Omega_b$ such that $x_b\to \bar x\in[-\bar x_\eta,-\bar x_\eta]$ as $b\to 0$.  As in the previous lemma we only consider the case of $x\geq 0$ and $y\geq 0$. 
Since $y_b\to 0$ as $b\to 0$ a straightforward computation gives
\[\begin{split}
\frac{\partial^2 \psi_b}{\partial x^2}(x_b,y_b)&=8y_b^2\frac{1-5\bar x^2 }{(1+\bar x^2)^4}(1+O(y_b^2,|x_b-\bar x|))\\
&=4b\Big(1-\eta f(\bar x)-\eta f'(\bar x)(x_b-\bar x)\Big)\frac{1-5\bar x^2 }{(1+\bar x^2)^2}(1+O(y_b^2,|x_b-\bar x|))
\end{split}
\]
\[\begin{split}
\frac{\partial^2 \psi_b}{\partial x\partial y}(x_b,y_b)&=16y_b
\frac{\bar x}{(1+\bar x^2)^3}(1+O(y_b^2,|x_b-\bar x|))\\
&=16 \sqrt{\frac b2 \Big(1-\eta f(\bar x)-\eta f'(\bar x)(x_b-\bar x)\Big)}\frac{\bar x }{(1+\bar x^2)^2}(1+O(y_b^2,|x_b-\bar x|))
\end{split}
\]
\[\begin{split}
\frac{\partial^2 \psi_b}{\partial y^2}(x_b,y_b)&=-\frac 4{(1+\bar x^2)^2}(1+O(y_b^2,|x_b-\bar x|)).
\end{split}
\]
Moreover, by the symmetry of $v(x,y)$,
\[
\begin{split}
\frac{\partial^2 v (x_b,y_b)}{\partial x^2}&=\frac{\partial^2 v (\bar x,0)}{\partial x^2}+O(y_b^2,|x_b-\bar x|)=f''(\bar x)+O(y_b^2,|x_b-\bar x|)\\
\frac{\partial^2 v (x_b,y_b)}{\partial x \partial y}&=\frac{\partial^2 v (\bar x,0)}{\partial x \partial y}+O(y_b,|x_b-\bar x|^2)=O(y_b,|x_b-\bar x|^2)\\
\frac{\partial^2 v (x_b,y_b)}{\partial y^2}&=\frac{\partial^2 v (\bar x,0)}{\partial y^2}+O(y_b^2,|x_b-\bar x|).\\
\end{split}
\]
Then
{\small
\[
\begin{split}
\frac{\partial^2 u_b}{\partial x^2}(x_b,y_b)&=b\left[
4\Big(1-\eta f(\bar x)-\eta f'(\bar x)(x_b-\bar x)\Big)\frac{1-5\bar x^2 }{(1+\bar x^2)^2}-\eta f''(\bar x)
\right](1+O(y_b^2,|x_b-\bar x|))\\
\frac{\partial^2 u_b}{\partial x\partial y}(x_b,y_b)&=\sqrt b\left[
16 \sqrt{\frac 12 \Big(1-\eta f(\bar x)-\eta f'(\bar x)(x_b-\bar x)\Big)}\frac{\bar x }{(1+\bar x^2)^2}
\right](1+O(\sqrt b y_b,|x_b-\bar x|))\\
\frac{\partial^2 u_b}{\partial y^2}(x_b,y_b)&=-\frac 4{(1+\bar x^2)^2}(1+O(b,|x_b-\bar x|)).
\end{split}
\]}
Then we have, using \eqref{derx} and \eqref{dery}
\beq\label{st}
\begin{split}
&\frac{\partial^2 u_b}{\partial x^2}\left(\frac{\partial u_b}{\partial y}\right)^2-2\frac{\partial^2 u_b}{\partial x\partial y}\frac{\partial u_b}{\partial x}\frac{\partial u_b}{\partial y}+\frac{\partial^2 u_b}{\partial y^2}\left(\frac{\partial u_b}{\partial x}\right)^2=\\
&8b^2\left[
4\Big(1-\eta f(\bar x)-\eta f'(\bar x)(x_b-\bar x)\Big)\frac{1-5\bar x^2 }{(1+\bar x^2)^2}-\eta f''(\bar x)
\right]
\frac{1-\eta f(\bar x)-\eta f'(\bar x)(x_b-\bar x)}{(1+\bar x^2)^2}
\big(1+o(1)\big)\\
&+8b^2\left[
16 \sqrt{\frac 12 \Big(1-\eta f(\bar x)-\eta f'(\bar x)(x_b-\bar x)\Big)}\frac{\bar x }{(1+\bar x^2)^2}
\right]\left(\frac{4\bar x(1-\eta f(\bar x))
}{1+\bar x^2}-\eta f'(\bar x))
\right)\\
&
\cdot
\frac{\sqrt{\frac 1 2 (1-\eta f(\bar x)-\eta f'(\bar x)(x_b-\bar x))}}{1+\bar x^2}\big(1+o(1)\big)-b^2\frac 4{(1+\bar x^2)^2}\left(\frac{4\bar x(1-\eta f(\bar x))
}{1+\bar x^2}-\eta f'(\bar x))\right)^2\big(1+o(1)\big)
\end{split}
\eeq
So when $\bar x\neq \bar x_\eta$ we have that $ 1-\eta f(\bar x)\neq 0$ and  
\[
|\nabla u_b(x_b,y_b)|=2\sqrt 2\sqrt b\frac{\sqrt {1-\eta f(\bar x)}}{1+\bar x^2}(1+o(1))
\]
so that 
\[k_{\Om_b,E}=-b^2 \frac{G(\bar x)}{b^\frac 32 \left(2\sqrt 2\frac{\sqrt {1-\eta f(\bar x)}}{ (1+\bar x^2)}\right)^3}(1+o(1))=O(\sqrt b)
\]
where 
\[
\begin{split}
G(\bar x)&=\left[
32\Big(1-\eta f(\bar x)\Big))\frac{1-5\bar x^2 }{(1+\bar x^2)^2}-8\eta f''(\bar x)
\right]
\frac{1-\eta f(\bar x) }{(1+\bar x^2)^2}\\
&+64\Big(1-\eta f(\bar x)\Big)\frac{\bar x}{(1+\bar x^2)^3}\left(\frac{4\bar x(1-\eta f(\bar x))
}{1+\bar x^2}-\eta f'(\bar x))\right)\\
&- \frac 4{(1+\bar x^2)^2}\left(\frac{4\bar x}{(1+\bar x^2)}(1-\eta f(\bar x)
)-\eta f'(\bar x))
\right)^2.
\end{split}
\]
So we get
\begin{equation}
\frac{1-x^2-y^2}2k_{\Om_b,E}+\langle\nu_E,(x,y)\rangle_E=O(\sqrt b),
\end{equation}
and so by \eqref{mcur0} we get \eqref{m91-f}.
\vskip0.2cm
When $\bar x=\bar x_\eta$ instead,  $ 1-\eta f(\bar x_\eta)= 0$ and $f'(\bar x_\eta )\neq 0$ and \eqref{ma54} holds. So \eqref{st} becomes
\beq\label{st2}
\begin{split}
&\frac{\partial^2 u_b}{\partial x^2}\left(\frac{\partial u_b}{\partial y}\right)^2-2\frac{\partial^2 u_b}{\partial x\partial y}\frac{\partial u_b}{\partial x}\frac{\partial u_b}{\partial y}+\frac{\partial^2 u_b}{\partial y^2}\left(\frac{\partial u_b}{\partial x}\right)^2=\\
&8b^2\left[
4\eta f'(\bar x)(\bar x-x_b)\frac{1-5 \bar x^2 }{(1+\bar x^2)^2}-\eta f''(\bar x)
\right]\frac{\eta f'(\bar x)(\bar x-x_b) }{(1+\bar x^2)^2}
\big(1+o(1)\big)\\
&-64b^2\frac{\bar x\eta^2f'(\bar x)^2(\bar x-x_b)}{(1+\bar x^2)^3}\big(1+o(1)\big)-4b^2\frac{\eta^2(f'(\bar x))^2}{(1+\bar x^2)^2}\big(1+o(1)\big)=\\
&-4b^2\frac{\eta^2(f'(\bar x))^2}{(1+\bar x^2)^2}\big(1+o(1)\big)<0.
\end{split}
\eeq
This gives that $k_{\Om_b,E}>0$ if $\bar x-x_b\to0$ and by \eqref{ma53} we get
\begin{equation}
\frac{1-x^2-y^2}2k_{\Om_b,E}+\langle\nu_E,(x,y)\rangle_E>0
\end{equation}
which proves \eqref{m91-f} when $\partial\Omega_b\subset \mathbb S^2$.
\vskip0.2cm
{\bf Case 2, $\Omega_b\subset \mathbb H^2$}

Using Eucledian coordinates we have 
\begin{equation}\label{ti}
\cos(\theta)k_{\Om_b}+\sin(\theta)\langle\nu,\vec{e_{\theta}}\rangle=
\frac{1+x^2+y^2}{2}\kappa_E-\langle\nu_E,(x,y)\rangle_E.
\end{equation}
Here the computations are very similar but the contribution of $\langle\nu_E,(x,y)\rangle_E$ has a ``bad'' sign. We have that
{\small
\[
\begin{split}
\frac{\partial^2 u_b}{\partial x^2}(x_b,y_b)&=-b\left[
4\Big(1-\eta f(\bar x)-\eta f'(\bar x)(x_b-\bar x)\Big)\frac{1+5\bar x^2}{(1-\bar x^2)^4}+\eta f''(\bar x)
\right](1+O(y_b^2,|x_b-\bar x|))\\
\frac{\partial^2 u_b}{\partial x\partial y}(x_b,y_b)&=-\sqrt b\left[
16 \sqrt{\frac 12 \Big(1-\eta f(\bar x)-\eta f'(\bar x)(x_b-\bar x)\Big)}\frac{\bar x }{(1-\bar x^2)^2}
\right](1+O(\sqrt b y_b,|x_b-\bar x|))\\
\frac{\partial^2 u_b}{\partial y^2}(x_b,y_b)&=-\frac 4{(1-\bar x^2)^2}(1+O(b,|x_b-\bar x|)).
\end{split}
\]}
and then
\beq\label{st3}
\begin{split}
&\frac{\partial^2 u_b}{\partial x^2}\left(\frac{\partial u_b}{\partial y}\right)^2-2\frac{\partial^2 u_b}{\partial x\partial y}\frac{\partial u_b}{\partial x}\frac{\partial u_b}{\partial y}+\frac{\partial^2 u_b}{\partial y^2}\left(\frac{\partial u_b}{\partial x}\right)^2=\\
&-8b^2\left[
4\Big(1-\eta f(\bar x)-\eta f'(\bar x)(x_b-\bar x)\Big)\frac{1+5\bar x^2}{(1-\bar x^2)^4}+\eta f''(\bar x)
\right]
\frac{1-\eta f(\bar x)-\eta f'(\bar x)(x_b-\bar x)}{(1-\bar x^2)^2}
\big(1+o(1)\big)\\
&+8b^2\left[
16 \sqrt{\frac 12 \Big(1-\eta f(\bar x)-\eta f'(\bar x)(x_b-\bar x)\Big)}\frac{\bar x }{(1-\bar x^2)^2}
\right]\left(\frac{4\bar x(1-\eta f(\bar x))
}{1-\bar x^2}+\eta f'(\bar x))
\right)\\
&
\cdot
\frac{\sqrt{\frac 1 2 (1-\eta f(\bar x)-\eta f'(\bar x)(x_b-\bar x))}}{1-\bar x^2}\big(1+o(1)\big)-b^2\frac 4{(1-\bar x^2)^2}\left(\frac{4\bar x(1-\eta f(\bar x))
}{1-\bar x^2}+\eta f'(\bar x))\right)^2\big(1+o(1)\big)
\end{split}
\eeq
As in the previous case, when $\bar x\neq \bar x_\eta$ we have that $ 1-\eta f(\bar x)\neq 0$ and  
\[
|\nabla u_b(x_b,y_b)|=2\sqrt 2\sqrt b\frac{\sqrt {1-\eta f(\bar x)}}{1-\bar x^2}(1+o(1))
\]
so that again $k_{\Om_b,E}=O(b^\frac12)\to 0$. Next observe that by \eqref{ma54b} and \eqref{ma73} we deduce that $\langle\nu_E,(x,y)\rangle_E=O(b^\frac12)$ which gives
\begin{equation}
\cos(\theta)k_{\Om_b}+\sin(\theta)\langle\nu,\vec{e_{\theta}}\rangle=
\frac{1+x^2+y^2}{2}\kappa_E-\langle\nu_E,(x,y)\rangle_E=O(b^\frac12)
\end{equation}
and then  \eqref{m91-f} holds.
\vskip0.2cm
When $\bar x=\bar x_\eta$ instead,  $ 1-\eta f(\bar x_\eta)= 0$ and $f'(\bar x_\eta )\neq 0$ and again
\beq\label{st4}
\begin{split}
&\frac{\partial^2 u_b}{\partial x^2}\left(\frac{\partial u_b}{\partial y}\right)^2-2\frac{\partial^2 u_b}{\partial x\partial y}\frac{\partial u_b}{\partial x}\frac{\partial u_b}{\partial y}+\frac{\partial^2 u_b}{\partial y^2}\left(\frac{\partial u_b}{\partial x}\right)^2=-4b^2\frac{\eta^2f'(\bar x)^2}{(1-\bar x^2)^2}\big(1+o(1)\big)
\end{split}
\eeq
On the other hand by \eqref{ma54b} we have for some positive constants $C_1,C_2$
\beq
k_{\Om_b,E}=\begin{cases}
\big(C_1+o(1)\Big)\frac{\sqrt b}{|x_b-x_\eta|^\frac32}
& \hbox{ when } \frac{|\bar x_\eta-x_b|}{b}\to \infty\\
\frac{C_2+o(1)}b
& \hbox{ when } \frac{|\bar x_\eta-x_b|}{b}\to \gamma\ge0
\end{cases}
\eeq
and again since 
\[\langle\nu_E,(x,y)\rangle_E=\begin{cases}
O(\sqrt{\frac b{|x_b-\bar x_\eta|}}) & \hbox{ when } \frac{|\bar x_\eta-x_b|}{b}\to \infty\\\\
O(1) & \hbox{ when } \frac{|\bar x_\eta-x_b|}{b}\to \gamma\ge0
\end{cases}\]
this gives
\begin{equation}
\cos(\theta)k_{\Om_b}+\sin(\theta)\langle\nu,\vec{e_{\theta}}\rangle=
\frac{1+x^2+y^2}{2}\kappa_E-\langle\nu_E,(x,y)\rangle_E=
\begin{cases}
>0 & \hbox{ when } \frac{|\bar x_\eta-x_b|}{b}\to \infty\\
+\infty & \hbox{ when } \frac{|\bar x_\eta-x_b|}{b}\to \gamma\ge0\end{cases}
\end{equation}
which proves \eqref{m91-f} for $\mathbb H^2$.

\end{proof}

We are now ready to conclude the proof of Theorem \ref{T1}.

\begin{proof}[Proof of Theorem \ref{T1}] Property $a)$ follows by Corollary \ref{maxc} because the existence of $n$ connected component for the superlevel $\omega_b$ defined in \eqref{omega-b}  implies the existence of at least $n$ maximum points for $u_b$. Property $b)$ is proved in Lemma \ref{l2b}, see \eqref{ma3}, $c)$ in Proposition \ref{maxp}.  Property $d)$ 
follows from in Lemma \ref{ma11} with $\bar x\neq \bar x_\eta$. Finally property $e)$ is proved in Lemma a \ref{sharp}.
\end{proof}
\section{Some examples}\label{examples}

\subsection{Convex spherical domains} All smooth convex spherical domains are uniformly star-shaped with respect to any of their points and satisfy \eqref{assumption}. This extends the result of \cite{GP1}, where Theorem \ref{main} was proved only for uniformly convex domains with diameter smaller that $\frac{\pi}{2}$. 

\subsection{Complements of convex spherical domains} Note that also the complements of spherical convex sets are uniformly star-shaped with respect to some points and satisfy \eqref{assumption}, hence Theorem \ref{main} applies. For example geodesic disks of radius greater than $\frac{\pi}{2}$ are not convex (their complements are convex). This fact is somewhat surprising when compared with the Euclidean case. Obviously, in our case the topology of $\mathbb S^2$ plays a crucial role. Moreover observe that the torsion function and the first Dirichlet eigenfunction, which are explicit, have indeed exactly one non-degenerate critical point. This phenomenon is more general, and the uniqueness of the critical point holds for any semi-stable solution and for complements of arbitrary convex sets.

\subsection{Other examples of non-convex, uniformaly star-shaped spherical domains satisfying \eqref{assumption} obtained via stereographic projection}\ 

In order to  obtain other examples of spherical domains for which Theorem \ref{main} applies, it is convenient to look at the {\it stereographic projection} used in the previous section.

We use \eqref{assumption_stereo} to produce examples of uniformly star-shaped, non-convex domains on $\mathbb S^2$ for which \eqref{assumption} holds. To do so, it is sufficient to find uniformly star-shaped domains of $\mathbb R^2$ for which the right-hand side of \eqref{assumption_stereo} is positive. Note that in many situations, proving that \eqref{assumption_stereo} is positive is easier to check.

A first example is the following. Let $\Omega_p:=\Psi^{-1}(Q_p)$, where $Q_p=\{(X,Y)\in\mathbb R^2:|X|^p+|Y|^p<2^{-p}\}$, with $p\in\mathbb N$ sufficiently large. It is easy to check that $Q_p$ is uniformly star-shaped with respect to $0$ and is such that the right-hand side of \eqref{assumption_stereo} is positive, therefore $\Omega_p$ is uniformly starshaped with respect to the north pole, satisfies \eqref{assumption}, and if $p$ is large enough, it is not convex. Note that $Q_p$ is convex in $\mathbb R^2$.

Analogously, one can produce examples of uniformly star-shaped domains of $\mathbb S^2$ satisfying \eqref{assumption} starting from uniformly star-shaped domains $D$ of $\mathbb R^2$ which are {\it not convex in $\mathbb R^2$} (which implies that $\Psi^{-1}(D)$ is not convex in $\mathbb S^2$). For example, we can start from a uniformly star-shaped domain $D_{a,b,k}\subset\mathbb R^2$ whose boundary is of the form 
$$
\partial D_{a,b,k}=\{(X,Y)=(a+b\sin(2\pi k s))(\cos(s),\sin(s)):s\in(0,2\pi)\},
$$ 
where $a>0$, $b\in\mathbb R$ and $k\in\mathbb N$ are chosen such that the right-hand side of \eqref{assumption_stereo} is positive. Examples of domains $\Psi^{-1}(Q_p),\Psi^{-1}(D_{a,b,k})$ are depicted in Figure \ref{fig1}.

\begin{figure}
\includegraphics[width=\textwidth]{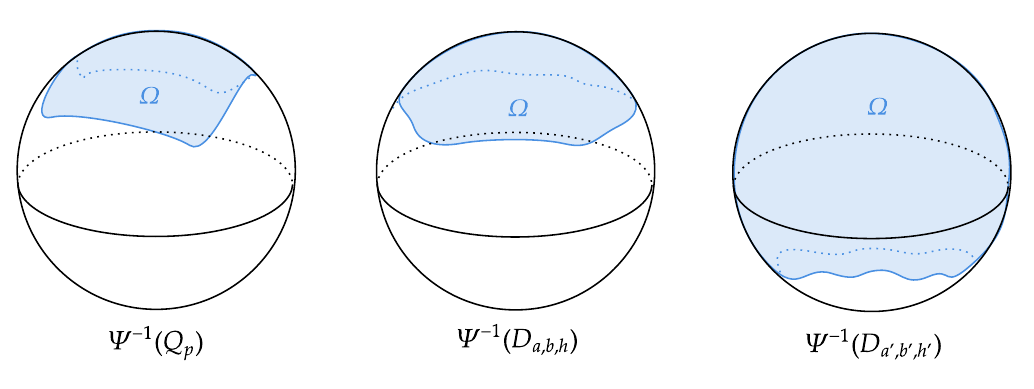}
\caption{Some examples of non-convex domains with a unique critical point.}
\label{fig1}
\end{figure}

\subsection{On the location of the critical point for spherical convex sets}\label{sub:location} If $\Omega$ is a smooth convex spherical domain, then it is uniformly star-shaped with respect to any of its points $P$ and satisfies condition \eqref{assumption} for any choice of $P\in\Omega$ as north pole. Therefore, from Subsection \ref{nondegenerate} we deduce that the critical points do not belong to any equator with north pole $P$, for all $P\in\Omega$. This implies that, defining $\omega=\bigcap_{P\in\Omega}B(P,\pi/2)$, then the critical point must be located in $\omega$. See Figure \ref{fig2} for a graphic representation. We also remark that using this information in \cite{GP1} would allow to drop the hypothesis on the diameter for uniformly convex, spherical domains. In fact, if the diameter of $\Omega$ is smaller than $\frac{\pi}{2}$, then $\omega=\Omega$, hence the result of Subsection \ref{nondegenerate} does not restrict the region in $\Omega$ where the critical point can be located. In any case, Theorem \ref{main} provides a stronger statement.

\begin{figure}
\includegraphics[width=\textwidth]{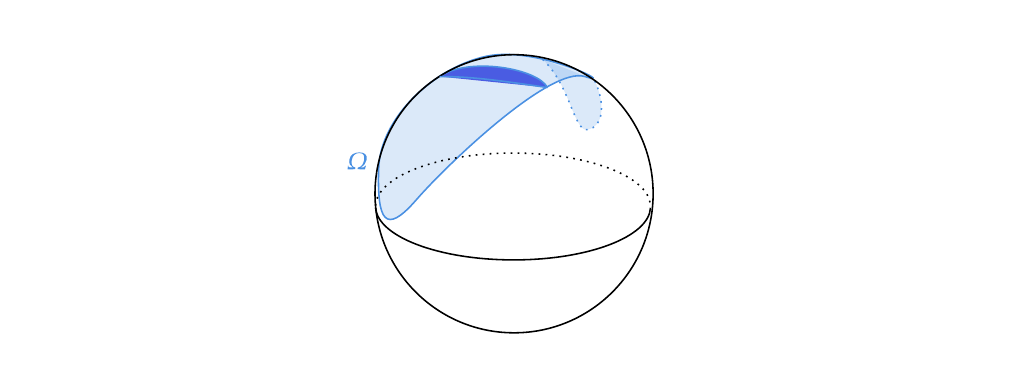}
\caption{The critical point of the convex set $\Omega$ must belong to the blue region.}
\label{fig2}
\end{figure}

\subsection{Non horoconvex domains}\label{hyp_ex}

We show here that the class of hyperbolic domains to which Theorem \ref{main2} applies is larger than horoconvex domains. 

Consider in the Poincaré disk model the ellipse $\Omega_{a,b}:=\{\frac{x^2}{a^2}+\frac{y^2}{b^2}<1\}$, where $0<b\leq a<1$. If $b<a^2$, $\Omega_ {a,b}$ is not horoconvex, since the (euclidean) curvature at the point $(0,b)$ is $\frac{b}{a^2}<1$. Clearly $\Omega_{a,b}$ is uniformly star-shaped with respect to the origin. We check that condition \eqref{assumption2} is satisfied for suitable choices of $a,b$ with $b<a^2$. It is more convenient to verify the equivalent assumption (see \eqref{equiv2})
$$
\frac{1+x^2+y^2}{2}\kappa_E-\langle\nu_E,(x,y)\rangle_E>0.
$$
Writing $x(\phi)=a\cos(\phi)$, $y(\phi)=b\sin(\phi)$, $\phi\in[0,2\pi]$, we compute 
\begin{equation}\label{condE}
\frac{1+x^2+y^2}{2}\kappa_E-\langle\nu_E,(x,y)\rangle_E\\
=\frac{ab(2-a^2-b^2+3(a^2-b^2)\cos(2\phi))}{(a^2\cos^2(\phi)+b^2\sin^2(\phi))^{3/2}}.
\end{equation}
A sufficient condition ensuring that the right-hand side of  \eqref{condE} is positive and $\Omega_{a,b}$ is not horoconvex is that $(a,b)\in (0,1)^2$ and $\sqrt{2a^2-1}<b<a^2\}$. Examples of convex domains satisfying \eqref{assumption2} which are not horoconvex are shown in Figure \ref{fig3}.

\begin{figure}
\includegraphics[width=\textwidth]{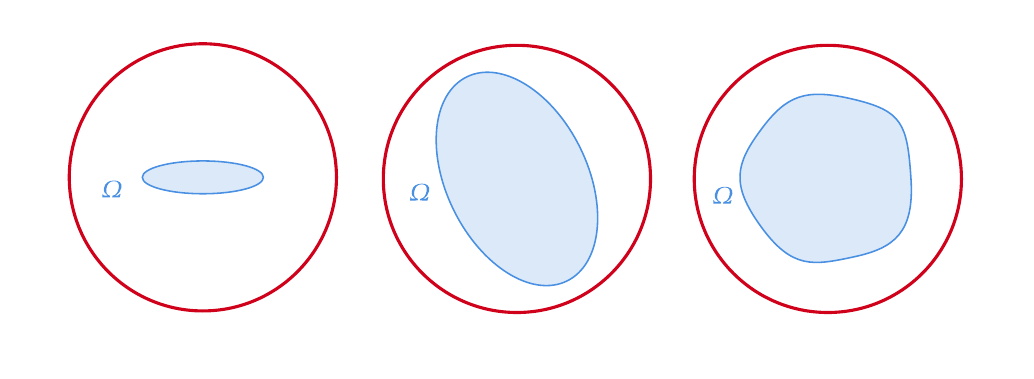}
\caption{Some examples of non-horoconvex domains with a unique critical point.}
\label{fig3}
\end{figure}


\bibliography{bibliography.bib}
\bibliographystyle{abbrv}
\end{document}